\newtheorem{theorem}{Theorem}[section]
\newtheorem{remark}{Remark}[section]
\newtheorem{defi}{Definition}[section]
\newtheorem{prop}{Proposition}[section]
\newcommand{\be}{\begin{equation}}
\newcommand{\ee}{\end{equation}}
\renewcommand{\theequation}{\thesection.\arabic{equation}}
\renewcommand{\thetheorem}{\thesection.\arabic{theorem}}
\renewcommand{\theequation}{\thesection.\arabic{equation}}
\begin{document}

\title[] {On Replacement axioms for the Jacobi identity for vertex
algebras and their modules}

\author{Thomas J. Robinson}


\begin{abstract}
We discuss the axioms for vertex algebras and their modules, using
formal calculus.  Following certain standard treatments, we take the
Jacobi identity as our main axiom and we recall weak commutativity and
weak associativity.  We derive a third, companion property that we
call ``weak skew-associativity.''  This third property in some sense
completes an $\mathcal{S}_{3}$-symmetry of the axioms, which is
related to the known $\mathcal{S}_{3}$-symmetry of the Jacobi
identity.  We do not initially require a vacuum vector, which is
analogous to not requiring an identity element in ring theory.  In
this more general setting, one still has a property, occasionally used
in standard treatments, which is closely related to skew-symmetry,
which we call ``vacuum-free skew-symmetry.''  We show how certain
combinations of these properties are equivalent to the Jacobi identity
for both vacuum-free vertex algebras and their modules.  We then
specialize to the case with a vacuum vector and obtain further
replacement axioms.  In particular, in the final section we derive our
main result, which says that, in the presence of certain minor axioms,
the Jacobi identity for a module is equivalent to either weak
associativity or weak skew-associativity.  The first part of this
result has previously appeared and been used to show the (nontrivial)
equivalence of representations of and modules for a vertex algebra.
Many but not all of our results appear in standard treatments; some of
our arguments are different {}from the usual ones.
\end{abstract}

\maketitle

\renewcommand{\theequation}{\thesection.\arabic{equation}}
\renewcommand{\thetheorem}{\thesection.\arabic{theorem}}
\setcounter{equation}{0} \setcounter{theorem}{0}
\setcounter{section}{0}

\section{Introduction}
This paper gives an enhancement of certain axiomatic treatments of the
notion of module for a vertex algebra, and the notion of vertex
algebra itself.  We note especially that we handle certain issues of
the module theory that are more subtle than in the algebra theory
alone, a point made clear in \cite{Li1} (cf. \cite{LL}); we shall
discuss these issues in detail below.  The notion of vertex algebra
was first mathematically defined and considered by Borcherds in
\cite{B}.  Our treatment follows the formal calculus approach, which
originally appeared in \cite{FLM2} and was further developed in
\cite{FHL}.  In particular, the Jacobi identity, implicit in
Borcherds' definition, first appeared in \cite{FLM2}.  The original
mathematical motivation for the formulation of the notion of vertex
algebra and its variant notion of vertex operator algebra was related
to work done to construct a natural ``moonshine'' module for the
Monster group, a module conjectured to exist by J. McKay and
J. Thompson and constructed in \cite{FLM1} and \cite{FLM2}.  It was
soon recognized that vertex operator algebras were essentially
equivalent to chiral algebras in conformal field theory and string
theory, as was discussed in \cite{FLM2}.

This entire work is concerned with axiomatic issues.  It is well known
that in vertex algebra theory the Jacobi identity is very useful and
the most natural main axiom, but that it is often also natural and
convenient to use certain ``pieces'' of it as crutches to build back
up to the full story.  Of course, these ``pieces'' on their own do not
tell the whole story except when they have already essentially been
``put back together'' to yield the entire Jacobi identity.  Therefore
various other important strong properties which are partial
replacements for the Jacobi identity have been used prominently in the
theory.  For instance, in \cite{FLM2}, the authors first prove a
certain associativity property of lattice vertex operators, en route
to constructing certain families of vertex operator algebras (see
Chapter 8, in particular (8.4.32) and Remark 8.5.2).  In \cite{DL} and
\cite{Li1} ``weak commutativity'' (which we shall discuss below) was
found useful.  Further, in \cite{H1}, \cite{H2} and \cite{H3} a
certain associativity property for intertwining operators was proved
and developed (one which is much more difficult and deeper than the
associativity properties of vertex algebras).  We emphasize here that
the discussion in this paragraph is only for philosophical purposes
and that intertwining operators are well beyond the scope of this
paper.  These issues concerning the axioms reflect one of the
non-classical ingredients of vertex algebras, namely, the non-trivial
mathematical theory of the axioms themselves.  It is only once this
mathematical theory of the axioms can be handled that one gets to
analogues of more classical types of results such as the
representation theory and, in fact, without first developing this
mathematical theory of axioms one cannot even see the equivalence of
modules and representations.  This paper deals only with this
particular non-classical piece of vertex algebra theory, that is, the
mathematics of the axioms.  In particular, we shall show a slightly
new way to obtain a certain crucial result that says that in the
presence of certain minor axioms, the Jacobi identity for a module for
a vertex algebra is equivalent to weak associativity for a module for
a vertex algebra.  This result brings one (nearly) exactly to the
point where the analogues of more classical types of results can be
developed.  For instance, with this result known, one can immediately
develop the representation theory of vertex algebras as in \cite{Li1}
(cf. \cite{LL}), along lines analogous to the classical representation
theories, such as for Lie algebras.  An odd feature of this paper,
then, since we only deal with non-classical aspects of the theory, is
that we shall not have any examples corresponding to classical types
of examples.  However, the reader may regard the axioms themselves as
``non-classical examples.''  For another development of the
mathematics of the axioms of vertex algebras, a development extending
the one discussed here in a qualitatively different fashion, see
\cite{R}.

We take as our main axiom of vertex algebra the Jacobi identity, as in
\cite{FLM2} and \cite{FHL}.  It is well known that there are various
replacement axioms for the Jacobi identity that are useful in the
module and representation theory of and construction of vertex
algebras.  These replacement axioms are based on ``commutativity'' and
``associativity'' properties, as developed in \cite{FLM2}, \cite{FHL},
\cite{DL}, \cite{Li1}.  This theory is treated in detail in \cite{LL}.
In particular, each of the notions of weak commutativity and weak
associativity together with other more minor properties may replace
the Jacobi identity.  (We shall eventually be precise about what we
mean by both the terms ``minor axiom'' and ``minor property'' in the
context of this paper.  See Remark \ref{rem:minordef}).  For
instance, weak commutativity, as well as the equivalence of weak
commutativity together with certain minor axioms and the Jacobi
identity, first appeared in \cite{DL}, in the setting of vertex
operator algebras as well as in the much more general settings of
generalized vertex algebras and abelian intertwining algebras.  In the
case of vertex operator algebras, this equivalence was then
generalized in \cite{Li1} (cf. \cite{LL}) to handle the theory that
does not require any gradings of the algebras and also to handle
certain subtle and important issues concerning modules.  In this paper
we also work in a setting without gradings and also discuss certain of
these issues concerning modules; however, we do not handle the vertex
superalgebra case (which is a mild generalization).

Our purpose in this paper is twofold.  First, we introduce the notion
of ``weak skew-associativity'' to complement the properties of weak
commutativity and weak associativity of a vertex algebra
(cf. \cite{LL}).  This third property brings out the more fully the
$\mathcal{S}_{3}$-symmetric nature of the axioms for a vertex algebra,
which is suggested by the $\mathcal{S}_{3}$-symmetry of the Jacobi
identity presented in \cite{FHL}.  Just as weak commutativity and weak
associativity may be thought of as vertex-algebraic analogues of the
relations $a(bc)=b(ac)$ and $a(bc)=(ab)c$, respectively, for
commutative associative algebras, weak skew-associativity is analogous
to the third relation in a natural triangle: $b(ac)=(ab)c$.  We take
especial note that in each of these analogues, ``$c$'' always appears
in the rightmost position, a point of importance for the module
theory, as is discussed in Section 3.6 of \cite{LL} and which is
related to the main motivation for this paper.  We show how using weak
skew-associativity we may simplify certain proofs of the equivalence
of axiom systems for a vertex algebra and for a module for a vertex
algebra. In particular, in the final section we derive our main
result, which says that, in the presence of certain minor axioms, the
Jacobi identity for a module is equivalent to either weak
associativity or weak skew-associativity.  The equivalence of the
Jacobi identity (for a module) with weak associativity (for a module)
was shown in \cite{Li1} (cf. Theorem 4.4.5 in \cite{LL}) and enters
into the proof of the (nontrivial) equivalence of the notions of
representation of, and of module for, a vertex algebra (\cite{Li1};
cf. Theorem 5.3.15 in \cite{LL}).

We note that a formula closely related to weak skew-associativity
appeared in \cite{FLM2} in Remark 8.8.12, where those authors recorded
a certain iterate-type formula.  In addition, a more recent study of
certain ``formal'' types of axioms (``formal'' in the sense of
``formal commutativity'' which we discuss below) arose in \cite{DLM}
(cf. \cite{D}).  One of these axioms, Theorem 3.4 in \cite{DLM}
(cf. \cite{D} Theorem 4.2), bears a striking resemblance to weak
skew-associativity, or rather a ``formal'' type of skew-associativity
and it would be interesting to further study possible connections.
See also Theorem 3.8 in \cite{DLM} (cf. \cite{D} Theorem 4.5) for a
twisted generalization of this ``formal'' axiom.

We also note that the $\mathcal{S}_{3}$-symmetry referred to in the
last paragraph is really part of the mathematics of the axioms, the
non-classical part of vertex algebra theory.  The
$\mathcal{S}_{3}$-symmetry is a symmetry, not of vertex algebras, but
of the main axiom for a vertex algebra.  While it is true that
there is an analogue for this symmetry in Lie algebra theory, it is
simple enough there as to be dealt with in passing.  So while it is
technically non-classical in the sense which we have been using, it is
too easy to warrant an extensive independent study, whereas such types
of non-classical study seem to be necessary in the case of vertex
algebra theory.

Our second goal is to more fully check some of the dependencies
among the minor properties of a vertex algebra.  For instance, we
avoid using the vacuum vector in our considerations as long as
possible.  Since the vacuum vector is analogous to an identity
element, this approach is analogous to the study of rings without
identity, sometimes known as ``rngs.'' (However, we resist the
temptation to call these vacuum-free vertex algebras ``ertex
algebras'').  Although our motivation for this level of generality was
not example-driven, we refer the reader to \cite{BD} and \cite{HL},
where a vacuum-free setting appeared.

In Section 2, we set up some basic definitions and notation as
well as summarize certain formal calculus results which we need.
Almost all the relevant results may also be found in the fuller account
presented in \cite{FLM2} as well as in Chapter 2 of \cite{LL}.

In Section 3, we further develop the formal calculus, essentially
redoing many calculations which are usually performed after the
definition of vertex algebra is given.  Our goal is to systematize
these calculations and to demonstrate how they depend only on the
formal calculus rather than on any of the particulars of the vertex
algebras where they are applied.  We note that a similar approach was
taken in \cite{Li2}, where the reader should compare the statement of
Lemma 2.1 in \cite{Li2} with the statement of Proposition 3.3 of this
work (the proof of Lemma 2.1 in \cite{Li2} is related to the proofs of
both Propositions 3.2 and 3.3 in this work); the latter result is an
extension of the former.  In particular, this extension is relevant
for handling skew-associativity properties in addition to
commutativity and associativity properties.

In Section 4, we define a vertex algebra without vacuum, which
we call a vacuum-free vertex algebra.  We then note how our formal
calculus results in Section 3 may be immediately applied
without further comment to show how the main axiom, the Jacobi
identity, may be replaced in the definition of a vacuum-free vertex
algebra by any two of weak commutativity, weak associativity and weak
skew-associativity.  We next formalize what we call vacuum-free
skew-symmetry.  This notion is often used in the literature when
necessary but is not highlighted or named, mostly because with a
vacuum vector one is guaranteed to have a $\mathcal{D}$ operator and
therefore one may obtain skew-symmetry \cite{B} for a vertex algebra.
Since we are trying to work with a minimum of assumptions, we shall
not have such a $\mathcal{D}$ operator, at least at this stage in the
development, so that we cannot even state skew-symmetry.  We then show
how the Jacobi identity may be replaced as an axiom by vacuum-free
skew-symmetry together with any single one of weak commutativity, weak
associativity or weak skew-associativity.  The strategy employed is
the same as that used in \cite{LL}, where the analogy between vertex
algebras and commutative associative algebras provides classical
guides.

In Section 5, we define the notion of module for a vacuum-free
vertex algebra and show the parallel results concerning replacement of
the (module) Jacobi identity.

In Section 6, we recall the definition of vertex algebra (with
vacuum) so that we may exactly recover certain results considered in
\cite{LL}.  In developing the minor properties related to the vacuum
vector and the $\mathcal{D}$ operator, we make more prominent use of
vacuum-free skew-symmetry than in other treatments, as far as the
author is aware.  We then develop various replacement axioms for the
Jacobi identity and a couple of further minor results which will be
useful in the final section.  Whereas without the vacuum vector we
have derived consequential properties from the Jacobi identity by
``slicing'' it with residues or using visible symbolic symmetry, with
the vacuum vector at our disposal the strategy is to plug it into our
known formulas, thereby specializing them.  Then, as before, once we
have derived the remaining minor properties, we attempt to piece
combinations of them together, using classical guides when we can, in
order to build back up to recover certain remaining properties.  We
note that in \cite{Li2}, the author considered certain generalizations
of vertex algebras where, in place of the Jacobi identity, only weak
associativity was assumed as an axiom.  Because of this
generalization, it was natural (or really necessary) for the author to
examine more carefully certain dependencies.  Some of our results are
therefore, at least in aesthetic terms, developed in the same spirit.
In particular, the reader should compare Proposition 2.6 and Corollary
2.7 in \cite{Li2} with Propositions \ref{prop:WADDER} and
\ref{prop:WAplusSCgivesDB} of this work, where the former results were
already stronger than ours, as we discuss in Remark \ref{rem:Li2a}
(see also Remark \ref{rem:Li2b}).
                   
In Section 7, we define a module for a vertex algebra (with vacuum)
and, after a couple of preliminary results, we present the main result
of this paper.  Namely, we show that the module Jacobi identity may be
replaced by either module weak associativity or module weak
skew-associativity.  The result that the module Jacobi identity may be
replaced by module weak associativity was shown in \cite{Li1}
(cf. Theorem 4.4.5 in \cite{LL}) and a corollary to it was used in
\cite{LL} to show, following \cite{Li1}, the equivalence between the
notions of representation of and module for a vertex algebra (see
Theorem 5.3.15 in \cite{LL}).

There are many treatments of axiom systems for the notion of vertex
(operator) algebra in the literature, involving the results of
\cite{B}, \cite{FLM2}, \cite{FHL}, \cite{DL} and \cite{Li1} mentioned
above, but as far as we are aware, the results of the present paper
that did not essentially appear in those works have not appeared
before.  This paper is not intended as a survey of the many existing
treatments of axioms.  The reader may wish to consult the bibliography
in \cite{LL}.  In any case, by introducing weak skew-associativity and
viewing it as being on equal footing with weak commutativity and weak
associativity, we are bringing to light the fuller
$\mathcal{S}_{3}$-symmetric nature of the family of axiom systems,
extending beyond and suggested by the $\mathcal{S}_{3}$-symmetry of
the Jacobi identity \cite{FHL}.

Except for certain minor exceptions, the well-known results which we
shall recall appeared in \cite{B}, \cite{FLM2}, \cite{FHL}, \cite{DL}
and \cite{Li1}.  However, for convenience, we shall use the (mostly
expository) treatment in \cite{LL} when we give specific references to
basic results and definitions, etc.
   
I would like to thank Professor Haisheng Li who first taught me the
theory which is discussed and extended here and also Professor James
Lepowsky for many discussions.  In addition, many thanks to Professors
Yi-Zhi Huang and Haisheng Li for reading preliminary versions and
making helpful comments.  I would also like to thank the referee for
helpful comments.

\section{Formal calculus summarized}
\setcounter{equation}{0}

Here we recall some basic material set up in \cite{FLM2}; as we
mentioned above, we shall follow the expository treatment in
\cite{LL}.

We shall write $x,y,z,x_{1},x_{2},x_{3},\dots$ for commuting formal
variables.  In this paper, formal variables will always commute, and
we will not use complex variables.  All vector spaces will be over
$\mathbb{C}$, although one may easily generalize many results to the
case of a field of characteristic $0$.  Let $V$ be a vector space.  We
use the following:
\begin{align*}
V[[x,x^{-1}]]=\biggl\{ \sum_{n \in \mathbb{Z}}v_{n}x^{n}|v_{n} \in V  \biggr\}
\end{align*}
(formal Laurent series),
and some of its subspaces:
\begin{align*}
V((x))=\biggl\{ \sum_{n \in \mathbb{Z}}v_{n}x^{n}|v_{n} \in V, v_{n}=0
\text{ for sufficiently negative } n \biggr\}
\end{align*}
(truncated formal Laurent series),
\begin{align*}
V[[x]]=\biggl\{ \sum_{n  \geq 0}v_{n}x^{n}|v_{n} \in V \biggr\}
\end{align*}
(formal power series),
\begin{align*}
V[x,x^{-1}]=\biggl\{ \sum_{n \in \mathbb{Z}}v_{n}x^{n}|v_{n} \in V,
v_{n}=0 \text{ for all but finitely many } n \biggr\}
\end{align*}
(formal Laurent polynomials), and
\begin{align*}
V[x]=\biggl\{ \sum_{n \geq 0}v_{n}x^{n}|v_{n} \in V, v_{n}=0 \text{ for all
but finitely many } n \biggr\}
\end{align*}
(formal polynomials).  Often our vector space $V$ will be a vector
space of endomorphisms, $\text{\rm End}\,V$.  Even when $V$ is
replaced by $\text{\rm End}\,V$ some of these spaces are not
algebras, and we must define multiplication only up to a natural
restrictive condition.  This condition is the summability condition,
which is given in Definitions 2.1.4 and 2.1.5 in \cite{LL}.  In
general, when computing some series we shall say that it ``exists''
(in keeping with an analogy with analysis) when the coefficient of any
monomial has only finitely many contributing terms, that is, when the
coefficient of any monomial is finitely computable.  If the
coefficients are themselves endomorphisms of a vector space then we
only require that the coefficient of any monomial be finitely
computable when the series is applied to a fixed, but arbitrary,
vector (see Remark 2.1.3 in \cite{LL}).

Since some of our spaces, such as $\text{\rm End}\,V[[x,x^{-1}]]$, are
not algebras, but only have a partial multiplication, we are not
guaranteed that multiplication is associative even when it is defined.
In fact, multiplication is not associative in the usual sense, but
there is a replacement property.  If $F(x), G(x)$ and $H(x) \in
\text{\rm End}\,V[[x,x^{-1}]]$ and if the three products $F(x)G(x)$,
$G(x)H(x)$ and $F(x)G(x)H(x)$ all exist, then
\begin{align*}
(F(x)G(x))H(x)=F(x)(G(x)H(x))
\end{align*}
(See Remark 2.1.6 \cite{LL} and the preceding discussion).  We shall
refer to this replacement for associativity as ``partial associativity''.

\begin{remark} \rm
Throughout this paper, as in \cite{LL}, we often extend our spaces to
include more than one variable.  We state certain properties which
have natural extensions in such multivariable settings, which we will
also use without further comment.
\end{remark}

We define the operator $\text{Res}_{x}:V[[x,x^{-1}]] \rightarrow V$ by
the following: For $f(x)=\sum_{n \in \mathbb{Z}}a_{n}x^{n} \in
V[[x,x^{-1}]]$, 
\begin{align*}
\text{Res}_{x}f(x)=a_{-1}.
\end{align*}
Further, we shall frequently use the notation $e^{w}$ to refer to the
formal exponential expansion, where $w$ is any formal object for which
such expansion makes sense.  For instance, we have the linear operator
$e^{y\frac{d}{dx}}:\mathbb{C}[[x,x^{-1}]] \rightarrow
\mathbb{C}[[x,x^{-1}]][[y]]$:
\begin{align*}
e^{y\frac{d}{dx}}=\sum_{n \geq
0}\frac{y^{n}}{n!}\left(\frac{d}{dx}\right)^{n}.
\end{align*}
We have (see (2.2.18) in \cite{LL}), the \it{automorphism property}\rm:
\begin{align}
\label{defi:aut}
e^{y\frac{d}{dx}}(p(x)q(x))=\left(e^{y\frac{d}{dx}}p(x)\right)
\left(e^{y\frac{d}{dx}}q(x)\right),
\end{align}
for all $p(x) \in \text{\rm End}\,V[x,x^{-1}] $ and $q(x) \in
\text{\rm End}\,V[[x,x^{-1}]]$.  We use the \it{binomial expansion
convention}, \rm which states that
\begin{align}
\label{binexpconv}
(x+y)^{n}=\sum_{k \geq 0}\binom{n}{k}x^{n-k}y^{k},
\end{align}
where we allow $n$ to be any integer and where we define
\begin{align*}
\binom{n}{k}=\frac{n(n-1)(n-2) \cdots (n-k+1)}{k!};
\end{align*}
the binomial expression is expanded in nonnegative powers of the
second-listed variable.  We also have (see Proposition 2.2.2 in
\cite{LL}) the \it{formal Taylor theorem}\rm :
\begin{prop}
Let $v(x) \in V[[x,x^{-1}]]$. Then
\begin{align*}
e^{y\frac{d}{dx}}v(x)&=v(x+y).
\end{align*}
\end{prop}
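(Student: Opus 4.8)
The plan is to reduce the identity to the case of a single monomial by linearity and then to verify it directly from the definitions of $e^{y\frac{d}{dx}}$, of the binomial coefficients, and of the binomial expansion convention. First I would observe that both sides are well-defined elements of $V[[x,x^{-1}]][[y]]$. For the left-hand side this is clear, since for each fixed $n \geq 0$ the element $\left(\frac{d}{dx}\right)^{n}v(x)$ again lies in $V[[x,x^{-1}]]$, so that $e^{y\frac{d}{dx}}v(x)$ is a power series in $y$ with coefficients in $V[[x,x^{-1}]]$. For the right-hand side, writing $v(x)=\sum_{m\in\mathbb{Z}}v_{m}x^{m}$ and expanding each $(x+y)^{m}$ via the binomial expansion convention, the coefficient of a fixed monomial $x^{a}y^{b}$ receives a contribution only from the single index $m=a+b$, so $v(x+y)$ exists in the sense described above.

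Next, since both operations are $\mathbb{C}$-linear in $v$ and, by the preceding existence check, may be computed coefficient by coefficient in the expansion $v(x)=\sum_{m}v_{m}x^{m}$, it suffices to prove the identity for $v(x)=x^{m}$ with $m\in\mathbb{Z}$ arbitrary. Here I would compute each side explicitly. On the left, the iterated power rule gives $\left(\frac{d}{dx}\right)^{n}x^{m}=m(m-1)\cdots(m-n+1)\,x^{m-n}$, whence $e^{y\frac{d}{dx}}x^{m}=\sum_{n\geq 0}\frac{y^{n}}{n!}m(m-1)\cdots(m-n+1)\,x^{m-n}=\sum_{n\geq 0}\binom{m}{n}x^{m-n}y^{n}$, using the definition of $\binom{m}{n}$. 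On the right, the binomial expansion convention (\ref{binexpconv}) gives precisely $(x+y)^{m}=\sum_{n\geq 0}\binom{m}{n}x^{m-n}y^{n}$. The two expressions agree, which establishes the monomial case and hence, by linearity, the general statement.

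The only genuine subtlety --- and thus the step I would treat most carefully --- is the well-definedness and term-by-term reduction in the first paragraph, namely the claim that substitution of $x+y$ for $x$ commutes with the infinite sum $\sum_{m}v_{m}x^{m}$; this is exactly an instance of the ``finitely computable coefficient'' requirement, and it is what legitimizes passing from general $v$ to monomials. Once this is secured, the heart of the argument is the elementary identity $\frac{1}{n!}m(m-1)\cdots(m-n+1)=\binom{m}{n}$, which holds for every integer $m$, including negative values, under the convention adopted here; consequently the single computation above handles all monomials uniformly, with no separate treatment needed for negative powers of $x$.
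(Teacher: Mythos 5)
Your proof is correct. The paper itself states this formal Taylor theorem without proof, simply citing Proposition 2.2.2 of \cite{LL}, and your argument --- reduction to monomials $x^{m}$ by linearity and coefficient-wise computation, followed by matching $\frac{1}{n!}m(m-1)\cdots(m-n+1)=\binom{m}{n}$ against the binomial expansion convention for all integers $m$ --- is precisely the standard proof given there.
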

\begin{flushright} $\square$ \end{flushright}

For completeness we include a proof of the following frequently used
fact, which equates two different expansions.
\begin{prop}
\label{prop:assocarith}
For all $n \in \mathbb{Z}$,
\begin{align*}
(x+(y+z))^{n}=((x+y)+z)^{n}.
\end{align*}
\end{prop}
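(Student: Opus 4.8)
The plan is to expand both sides explicitly using the binomial expansion convention \eqref{binexpconv}, read off the coefficient of each monomial $x^{i}y^{a}z^{b}$, and reduce the claimed equality to a single elementary identity among binomial coefficients. I note first that the case $n \geq 0$ is essentially trivial: both sides are then honest polynomials (the expansion in the second-listed variable terminates), and since addition of formal polynomials is associative, both equal the ordinary polynomial $(x+y+z)^{n}$. So the content lies entirely in the case $n < 0$, although the computation I describe handles all $n \in \mathbb{Z}$ uniformly.

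For the left-hand side I would first apply \eqref{binexpconv} with $x$ as the first-listed variable and $y+z$ as the second, and then observe that since the resulting exponent of $(y+z)$ is a nonnegative integer $k$, the factor $(y+z)^{k}$ may be expanded as an ordinary finite polynomial. Collecting terms, this yields
\begin{align*}
(x+(y+z))^{n}=\sum_{a,b \geq 0}\binom{n}{a+b}\binom{a+b}{a}x^{n-a-b}y^{a}z^{b}.
\end{align*}
For the right-hand side I would instead apply \eqref{binexpconv} with $x+y$ as the first-listed variable and $z$ as the second, and then expand each factor $(x+y)^{n-m}$, whose exponent need not be nonnegative, again by \eqref{binexpconv}, now in powers of $y$. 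This gives
\begin{align*}
((x+y)+z)^{n}=\sum_{a,b \geq 0}\binom{n}{b}\binom{n-b}{a}x^{n-a-b}y^{a}z^{b}.
\end{align*}
At this stage I would check that both expressions genuinely exist in the sense of Section 2: for each fixed pair $(a,b)$ the exponent of $x$ is determined, so every monomial receives only a single contribution and is finitely computable.

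Comparing the two expansions, the proposition reduces to the identity $\binom{n}{a+b}\binom{a+b}{a}=\binom{n}{b}\binom{n-b}{a}$ for every $n \in \mathbb{Z}$ and all integers $a,b \geq 0$. I would prove this directly from the definition $\binom{n}{k}=n(n-1)\cdots(n-k+1)/k!$, which makes sense for arbitrary integer $n$: both sides equal $n(n-1)\cdots(n-a-b+1)/(a!\,b!)$, the point being that the falling product of length $a+b$ factors as the falling product of length $b$ times the falling product of length $a$ begun at $n-b$. Since this is a genuine polynomial identity in $n$, and not merely a relation among nonnegative-integer binomial coefficients, it holds for all integer values of $n$, exactly as the binomial expansion convention requires.

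The only real subtlety, and the step I would be most careful about, is the bookkeeping of the expansion convention: on the right-hand side the inner factor $(x+y)^{n-m}$ must be expanded in nonnegative powers of its second-listed variable $y$, whereas on the left-hand side $(y+z)^{k}$ is a bona fide polynomial because $k \geq 0$; confusing these, or silently assuming associativity of the symbol ``$+$'' before it has been established, is precisely the trap this proposition exists to avoid. As a slicker alternative one can argue operator-theoretically: by the formal Taylor theorem $(x+w)^{n}=e^{w\frac{d}{dx}}x^{n}$, so that $(x+(y+z))^{n}=e^{(y+z)\frac{d}{dx}}x^{n}=e^{y\frac{d}{dx}}e^{z\frac{d}{dx}}x^{n}=e^{y\frac{d}{dx}}(x+z)^{n}=((x+y)+z)^{n}$, the last equality again being the formal Taylor theorem applied to $(x+z)^{n}\in\mathbb{C}[[z]][[x,x^{-1}]]$. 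This trades the binomial identity for the exponential law $e^{y\frac{d}{dx}}e^{z\frac{d}{dx}}=e^{(y+z)\frac{d}{dx}}$, which is itself just the binomial theorem in disguise, so I would lean on the direct coefficient comparison as the more self-contained route.
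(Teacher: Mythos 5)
Your proof is correct, but your primary route differs from the paper's. The paper's proof is exactly the ``slicker alternative'' you sketch in your last paragraph: it invokes the formal Taylor theorem to write $(x+(y+z))^{n}=e^{(y+z)\frac{\partial}{\partial x}}x^{n}$, splits the exponential via $e^{w_{1}+w_{2}}=e^{w_{1}}e^{w_{2}}$ for commuting formal objects, and applies the Taylor theorem twice more --- four lines in total. Your main argument instead expands both sides directly under the binomial expansion convention \eqref{binexpconv} and reduces the claim to the identity $\binom{n}{a+b}\binom{a+b}{a}=\binom{n}{b}\binom{n-b}{a}$, verified from the falling-factorial definition as a polynomial identity in $n$; both coefficient computations and the factorization of the length-$(a+b)$ falling product are correct (modulo the harmless slip of writing $m$ for $b$ in one intermediate exponent). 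What each approach buys: the paper's argument is shorter and leans on machinery already set up (the formal Taylor theorem and the exponential law), at the cost of hiding the binomial content inside $e^{w_{1}+w_{2}}=e^{w_{1}}e^{w_{2}}$; yours is more self-contained and makes completely explicit where the expansion convention matters --- in particular your observation that $(y+z)^{k}$ on the left is a genuine polynomial while $(x+y)^{n-b}$ on the right must be expanded in nonnegative powers of $y$ is precisely the point the proposition is guarding against. Either proof is acceptable.
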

\begin{proof}
If $w_{1}$ and $w_{2}$ are
commuting formal objects, then $e^{w_{1}+w_{2}}=e^{w_{1}}e^{w_{2}}$.
Thus we have
\begin{align*}
(x+(y+z))^{n}
=e^{(y+z)\frac{\partial}{\partial x}}x^{n}
=e^{y\frac{\partial}{\partial x}}\left(e^{z\frac{\partial}{\partial x}}x^{n}\right)
=e^{y\frac{\partial}{\partial x}}(x+z)^{n}
=((x+y)+z)^{n}.
\end{align*}
\end{proof}
We note as a consequence that for all integers $n$ (and not just
nonnegative integers) we have the (non-vacuous) fact that
\begin{align*}
((x+y)-y)^{n}=(x+(y-y))^{n}=x^{n}.
\end{align*}

We define the formal delta function by
\begin{align*}
\delta (x) = \sum_{ n \in \mathbb{Z}}x^{n}.
\end{align*}
We have (see Proposition 2.3.21 and Remarks 2.3.24 and 2.3.25 in
\cite{LL}) the delta-function substitution property:
\begin{prop}
\label{prop:deltasub}
For $f(x,y,z) \in \text{\rm End}\,V[[x,x^{-1},y,y^{-1},z,z^{-1}]]$ 
such that for each fixed $v \in V$
\begin{align*}
f(x,y,z)v \in \text{\rm End}\,V[[x,x^{-1},y,y^{-1}]]((z))
\end{align*}
and such that
\begin{align*}
\text{\rm lim}_{x \rightarrow y}f(x,y,z)
\end{align*}
exists (where the ``limit'' is the indicated formal substitution), we
 have
\begin{align*}
\delta\left(\frac{y+z}{x}\right)f(x,y,z)=\delta\left(\frac{y+z}{x}
\right)f(y+z,y,z)=\delta\left(\frac{y+z}{x}\right)f(x,x-z,z).
\end{align*}
\end{prop}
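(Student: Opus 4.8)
The plan is to exploit the fact that multiplying by $\delta\left(\frac{y+z}{x}\right)$ formally enforces the relation $x=y+z$ (equivalently $y=x-z$), so that each of the two claimed equalities amounts to carrying out this substitution in one of the variables of $f$. The algebraic engine is the elementary identity
\begin{align*}
x\,\delta\left(\frac{y+z}{x}\right)=(y+z)\,\delta\left(\frac{y+z}{x}\right),
\end{align*}
which follows immediately from the definition $\delta\left(\frac{y+z}{x}\right)=\sum_{n\in\mathbb{Z}}(y+z)^{n}x^{-n}$ by a shift of the summation index, and which yields $x^{k}\delta\left(\frac{y+z}{x}\right)=(y+z)^{k}\delta\left(\frac{y+z}{x}\right)$ for every $k\in\mathbb{Z}$. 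Symbolically this is the statement $(x-(y+z))\delta\left(\frac{y+z}{x}\right)=0$, and since $y-(x-z)=-(x-(y+z))$ the same delta function annihilates $y-(x-z)$ as well; these two facts are what drive the two substitutions.

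First I would dispose of the existence questions, which I expect to be the main obstacle. Before either equality can be asserted one must know that $f(y+z,y,z)$ and $f(x,x-z,z)$ are themselves well-defined and that the infinite sums arising below are finitely computable. Writing $f(x,y,z)v=\sum_{m}g_{m}(y,z)x^{m}$, the hypothesis that for each $v$ the series $f(x,y,z)v$ is truncated from below in $z$ provides a lower bound on the powers of $z$ that is uniform in $m$, so that expanding $(y+z)^{m}=\sum_{k\geq 0}\binom{m}{k}y^{m-k}z^{k}$ produces, for each fixed power of $z$, only finitely many values of $k$. The hypothesis that $\lim_{x\to y}f(x,y,z)=f(y,y,z)$ exists then supplies, for each fixed power of $y$, the finiteness of the surviving sum over $m$. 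Combining the two finiteness statements shows that $f(y+z,y,z)v$ exists; the existence of $f(x,x-z,z)v$ follows by the same argument applied to the expansion of $f$ in powers of $y$, using that the diagonal substitution $\lim_{x\to y}f$ is symmetric in the roles of $x$ and $y$.

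With existence in hand, I would prove the first equality by comparing coefficients of $x^{\ell}$ on the two sides. On the left, $\delta\left(\frac{y+z}{x}\right)f(x,y,z)=\sum_{n,m}(y+z)^{n}g_{m}(y,z)x^{m-n}$ has $x^{\ell}$-coefficient $\sum_{n}(y+z)^{n}g_{n+\ell}(y,z)$, which after the index shift $j=n+\ell$ equals $(y+z)^{-\ell}\sum_{j}(y+z)^{j}g_{j}(y,z)=(y+z)^{-\ell}f(y+z,y,z)$; the reindexing is legitimate precisely because $f(y+z,y,z)$ was shown to exist. On the right, $f(y+z,y,z)$ is independent of $x$, so its product with $\delta\left(\frac{y+z}{x}\right)$ has $x^{\ell}$-coefficient $(y+z)^{-\ell}f(y+z,y,z)$ as well, and the two agree.

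Finally, for the equality with $f(x,x-z,z)$ I would avoid repeating the coefficient computation by applying the first equality to the auxiliary series $F(x,y,z):=f(x,x-z,z)$, which does not involve $y$ and which satisfies the same hypotheses by the existence discussion above. The first equality for $F$ reads $\delta\left(\frac{y+z}{x}\right)f(x,x-z,z)=\delta\left(\frac{y+z}{x}\right)f(y+z,(y+z)-z,z)=\delta\left(\frac{y+z}{x}\right)f(y+z,y,z)$, and combining this with the first equality for $f$ closes the desired chain. Alternatively one could run the coefficient argument directly in the variable $y$, using $(y-(x-z))\delta\left(\frac{y+z}{x}\right)=0$. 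In either route the only genuine content beyond bookkeeping is the formal-convergence justification of the substitutions, which is why I would present that step first and in the greatest detail.
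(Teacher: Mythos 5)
The paper supplies no proof of this proposition: it is quoted from \cite{LL} (Proposition 2.3.21 and the surrounding remarks) and the argument is omitted, so there is nothing internal to compare against. Your proposal is correct and is essentially the standard argument: you correctly isolate the summability of $f(y+z,y,z)$ and $f(x,x-z,z)$ as the only real content, and you derive it properly from the two hypotheses --- the uniform lower truncation in $z$ bounds the index $k$ in the binomial expansions, and the existence of $\lim_{x\to y}f$, which constrains the pairs $(m,n)$ with $m+n$ fixed, supplies the finiteness of the remaining sum for either substitution. The coefficient-of-$x^{\ell}$ comparison for the first equality and the reduction of the second equality to the first via the auxiliary series $F(x,y,z)=f(x,x-z,z)$ (using $((y+z)-z)^{n}=y^{n}$, the consequence of Proposition \ref{prop:assocarith}) are both sound; the only places where you gloss are the routine interchanges of summation when pulling $(y+z)^{-\ell}$ out of an already-summable series, which is at the level of rigor the paper itself adopts when it says such substitutions are ``similarly verified.''
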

\begin{flushright} $\square$ \end{flushright}

As in \cite{LL}, we use similarly verified
substitutions below without comment.

\section{Formal calculus further developed}
\setcounter{equation}{0}

Certain elementary identities concerning delta functions are very
convenient for dealing with the arithmetic of vertex algebras and, in
fact, in some cases, are fundamental to the very notion of vertex
algebra.  We state and prove some such identities in this
section.

The following well-known proposition appears as Proposition 2.3.8 in
\cite{LL}.  (Again, see the original works mentioned above.)  
We present an alternate proof which is implicitly
exploiting the $\mathcal{S}_{3}$-symmetry underlying the notion of
vertex algebra.  We include this alternate proof to emphasize that
$\mathcal{S}_{3}$-symmetry is playing a role in the development of the
ideas in this paper, as we discussed in the introduction.  For a
precise formulation see Section 2.7 in \cite{FHL} and Section 3.7 in
\cite{LL}.

\begin{prop}
We have the following two elementary identities:
\begin{align}
\label{twotermdelta}
x_{1}^{-1}\delta\left(\frac{x_{2}+x_{0}}{x_{1}}\right)-
x_{2}^{-1}\delta\left(\frac{x_{1}-x_{0}}{x_{2}}\right)=0
\end{align}
and
\begin{align}
\label{threetermdelta}
x_{0}^{-1}\delta\left(\frac{x_{1}-x_{2}}{x_{0}}\right)-
x_{0}^{-1}\delta\left(\frac{-x_{2}+x_{1}}{x_{0}}\right)-
x_{1}^{-1}\delta\left(\frac{x_{2}+x_{0}}{x_{1}}\right)=0.
\end{align}
\end{prop}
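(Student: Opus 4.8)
The plan is to prove both identities by the most elementary route available: expand each $\delta$-factor as a doubly-infinite series using the binomial expansion convention (\ref{binexpconv}), and then compare coefficients of monomials $x_0^a x_1^b x_2^c$, reducing everything to one elementary binomial-coefficient identity. The only genuinely delicate point throughout is bookkeeping: one must respect which variable each $\delta$ expands in nonnegative powers of. For instance, $\delta\left(\frac{x_1-x_2}{x_0}\right)$ and $\delta\left(\frac{-x_2+x_1}{x_0}\right)$ are expansions of the ``same'' object but differ precisely by opening $(x_1-x_2)^n$ in nonnegative powers of $x_2$ versus of $x_1$. As the sole computational input I would first record the standard ``upper-index negation'' identity $\binom{n}{k} = (-1)^{k}\binom{k-n-1}{k}$, valid for all $n \in \mathbb{Z}$ and $k \geq 0$, which follows at once from the definition of $\binom{n}{k}$ by factoring $-1$ out of each of the $k$ numerator factors.

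For (\ref{twotermdelta}) I would expand the first term as $\sum_{n}x_1^{-n-1}(x_2+x_0)^{n}$ and the second as $\sum_{m}x_2^{-m-1}(x_1-x_0)^{m}$, with each $(x_2+x_0)^n$ and $(x_1-x_0)^m$ opened in nonnegative powers of $x_0$. Fixing the power $x_0^{k}$ and a power of $x_1$, the two sides are forced to agree coefficient-by-coefficient exactly by the binomial identity above (after the evident reindexing $m = -n-1$ that interchanges the roles of negative and nonnegative exponents). This is short once the conventions are pinned down.

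For (\ref{threetermdelta}) I would first observe that in the difference of the two $x_0$-deltas every term with exponent $n \geq 0$ cancels, since $(x_1-x_2)^n$ is then an honest polynomial, independent of the expansion convention; only the terms with $n < 0$ survive, and these are exactly the two distinct expansions (in $x_2$ small, respectively $x_1$ small) of the negative powers of $x_1 - x_2$. I would then compare with $x_1^{-1}\delta\left(\frac{x_2+x_0}{x_1}\right)$ by fixing $x_0^{k}$: the resulting series in $x_1,x_2$ splits into a part supported on nonnegative powers of $x_2$ and a part supported on nonnegative powers of $x_1$, and each part matches one of the two surviving pieces of the difference via the same binomial identity. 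Equivalently, using (\ref{twotermdelta}) to rewrite the right-hand side as $x_2^{-1}\delta\left(\frac{x_1-x_0}{x_2}\right)$ recasts (\ref{threetermdelta}) as the familiar three-term delta identity, after which the identical coefficient comparison applies.

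The real obstacle is organizational rather than mathematical: the underlying arithmetic is routine, but one must track the expansion conventions and reindexings with total consistency, and in (\ref{threetermdelta}) correctly separate and pair the two expansion regions of the surviving delta. Here I would also foreground the $\mathcal{S}_{3}$-symmetry emphasized in the introduction: the three $\delta$-factors correspond to the three ways of singling out one of the variables, so once the bookkeeping for one identity is set up, its companion follows from the manifest symmetry in the roles of $x_0,x_1,x_2$ (with the appropriate signs) rather than from a fresh computation.
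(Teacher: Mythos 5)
Your proposal is correct, but it takes a genuinely different route from the paper. You prove both identities by brute-force coefficient extraction: expand every $\delta$-factor via the binomial expansion convention, fix a monomial $x_0^a x_1^b x_2^c$, and match coefficients using the upper-index negation identity $\binom{n}{k}=(-1)^k\binom{k-n-1}{k}$ (together with, for (\ref{threetermdelta}), the observation that the nonnegative-exponent terms of the two $x_0$-deltas cancel outright and only the two expansions of the negative powers of $x_1-x_2$ survive). This is the classical computation and it goes through; your bookkeeping of which variable each binomial opens in is the right thing to worry about, and the reduction of (\ref{threetermdelta}) to the familiar three-term identity via (\ref{twotermdelta}) is legitimate. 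The paper instead avoids binomial coefficients entirely: it starts from the single identity $y^{-1}\delta\left(\frac{x}{y}\right)=(x-y)^{-1}+(y-x)^{-1}$, applies the formal Taylor theorem to rewrite each shifted delta as a sum of two expanded inverse linear forms, and then cancels the resulting four (respectively six) terms in pairs using only Proposition \ref{prop:assocarith}, i.e.\ the associativity $(x+(y+z))^{n}=((x+y)+z)^{n}$ of the expansion convention. What your approach buys is self-containedness at the level of explicit coefficients; what the paper's buys is conceptual economy --- one two-term decomposition plus one associativity lemma replaces all reindexing --- and, as the author notes, it makes the $\mathcal{S}_{3}$-symmetric structure visible in the pairing of terms rather than leaving it as an after-the-fact observation, which is closer in spirit to your closing remark than to your actual computation.
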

\begin{proof}
First observe that 
\begin{align*}
y^{-1}\delta\left(\frac{x}{y}\right)=(x-y)^{-1}+(y-x)^{-1},
\end{align*}
where we note that a clue as to why this identity holds is that both
expressions are annihilated by $x-y$.  Then by the formal Taylor
theorem
\begin{align*}
y^{-1}\delta\left(\frac{x+z}{y}\right)&=e^{z\frac{d}{dx}}
y^{-1}\delta\left(\frac{x}{y}\right)\\
&=e^{z\frac{d}{dx}}((x-y)^{-1}+(y-x)^{-1})\\
&=((x+z)-y)^{-1}+(y-(x+z))^{-1}.
\end{align*}
Being careful with minus signs we may respectively expand all the
terms in the left-hand side of (\ref{twotermdelta}) in this manner yielding
\begin{align*}
((x_{2}+x_{0})-x_{1})^{-1}+(x_{1}-(x_{2}+x_{0}))^{-1}\\
-((x_{1}-x_{0})-x_{2})^{-1}-(x_{2}-(x_{1}-x_{0}))^{-1}.
\end{align*}
Now we get by Proposition \ref{prop:assocarith} that the first and
fourth, and the second and third terms pairwise cancel each other thus
giving us (\ref{twotermdelta}).
Similarly we may respectively expand all the terms in the left-hand side of
(\ref{threetermdelta}) to get
\begin{align*}
((x_{1}-x_{2})-x_{0})^{-1}+(x_{0}-(x_{1}-x_{2}))^{-1}\\
-((-x_{2}+x_{1})-x_{0})^{-1}-(x_{0}-(-x_{2}+x_{1}))^{-1}\\
-((x_{2}+x_{0})-x_{1})^{-1}-(x_{1}-(x_{2}+x_{0}))^{-1}.
\end{align*}
Now we get by Proposition \ref{prop:assocarith} that the first and
sixth terms, the third and fifth terms, and the second and fourth
terms pairwise cancel each other thus giving us (\ref{threetermdelta}).
\end{proof}

A slight variant of the following Proposition appeared in \cite{LL} as
Propositions 2.3.26 and 2.3.27:
\begin{prop}
\label{prop:forwardenhanceddelta}
Let $g(x_{0},x_{1},x_{2}) \in V[[x_{0},x_{1},x_{2}]]$.  Next, for
$a,b$ and $c \geq 0$, let
\begin{align*}
f(x_{0},x_{1},x_{2})=\frac{g(x_{0},x_{1},x_{2})}{x_{0}^{a}x_{1}^{b}x_{2}^{c}}.
\end{align*}
Then
\begin{align*}
x_{1}^{-1}\delta\left(\frac{x_{2}+x_{0}}{x_{1}}\right)f(x_{0},x_{2}+x_{0},x_{2})=
x_{2}^{-1}\delta\left(\frac{x_{1}-x_{0}}{x_{2}}\right)f(x_{0},x_{1},x_{1}-x_{0})\end{align*}
and
\begin{align*}
&x_{0}^{-1}\delta\left(\frac{x_{1}-x_{2}}{x_{0}}\right)f(x_{1}-x_{2},x_{1},x_{2})-
x_{0}^{-1}\delta\left(\frac{-x_{2}+x_{1}}{x_{0}}\right)f(-x_{2}+x_{1},x_{1},x_{2})-\\
&x_{1}^{-1}\delta\left(\frac{x_{2}+x_{0}}{x_{1}}\right)f(x_{0},x_{2}+x_{0},x_{2})=0.
\end{align*}
\end{prop}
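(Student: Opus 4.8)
The plan is to derive both identities from the two elementary delta-function identities of the preceding proposition, namely (\ref{twotermdelta}) and (\ref{threetermdelta}), by using the delta-function substitution property (Proposition \ref{prop:deltasub}) to install the rational factor $f$ in the correct place. The first thing I would check is that $f$ meets the hypotheses of Proposition \ref{prop:deltasub}: since $g \in V[[x_0,x_1,x_2]]$ and $a,b,c \geq 0$, the series $f$ has only finitely many negative powers of each variable, so after the appropriate relabeling of variables it lies in the required truncated space and the relevant formal limits exist, because setting two of the variables equal in $g$ produces a well-defined power series. This is exactly the point at which the assumptions $a,b,c \geq 0$ (rather than arbitrary integer exponents) are used.

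For the first identity, I would match the delta function $x_1^{-1}\delta\left(\frac{x_2+x_0}{x_1}\right)$ to the pattern of Proposition \ref{prop:deltasub}. Both $f(x_0,x_2+x_0,x_2)$ and $f(x_0,x_1,x_1-x_0)$ are substituted forms of the single series $f(x_0,x_1,x_2)$ under this delta function (the former substitutes $x_1 \mapsto x_2+x_0$ into the second argument, the latter substitutes $x_2 \mapsto x_1-x_0$ into the third), so Proposition \ref{prop:deltasub} gives
\begin{align*}
x_1^{-1}\delta\left(\frac{x_2+x_0}{x_1}\right)f(x_0,x_2+x_0,x_2)=x_1^{-1}\delta\left(\frac{x_2+x_0}{x_1}\right)f(x_0,x_1,x_1-x_0).
\end{align*}
I would then replace the delta function on the right using the elementary identity (\ref{twotermdelta}), i.e. $x_1^{-1}\delta\left(\frac{x_2+x_0}{x_1}\right)=x_2^{-1}\delta\left(\frac{x_1-x_0}{x_2}\right)$, which yields the desired equality.

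For the second identity I would treat each of the three summands separately, in each case using Proposition \ref{prop:deltasub} to pull the substituted $f$ back to the common, unsubstituted series $f(x_0,x_1,x_2)$: under $x_0^{-1}\delta\left(\frac{x_1-x_2}{x_0}\right)$ one has $f(x_1-x_2,x_1,x_2)=f(x_0,x_1,x_2)$, under $x_0^{-1}\delta\left(\frac{-x_2+x_1}{x_0}\right)$ one has $f(-x_2+x_1,x_1,x_2)=f(x_0,x_1,x_2)$, and under $x_1^{-1}\delta\left(\frac{x_2+x_0}{x_1}\right)$ one has $f(x_0,x_2+x_0,x_2)=f(x_0,x_1,x_2)$. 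The left-hand side then becomes the bracketed combination of the three delta functions appearing in (\ref{threetermdelta}) times the common factor $f(x_0,x_1,x_2)$, which vanishes by (\ref{threetermdelta}).

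The step I expect to require the most care is the bookkeeping of expansion conventions in the second identity, not the algebra. The first two summands involve the same algebraic quantity $x_1-x_2$ but expanded differently, in nonnegative powers of $x_2$ versus nonnegative powers of $x_1$, which is precisely why they do not simply cancel and why the three-term identity is needed rather than a naive cancellation. I would need to match each delta function to the pattern $\delta\left(\frac{y+z}{x}\right)$ of Proposition \ref{prop:deltasub} with the correct assignment of variables and signs, so that the binomial expansion convention built into that proposition agrees with the expansion appearing in each summand; only then is the reduction to the common factor $f(x_0,x_1,x_2)$ legitimate.
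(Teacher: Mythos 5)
Your proposal is correct and follows essentially the same route as the paper: use the delta-function substitution property to reduce each substituted form of $f$ to a common factor under its delta function, then invoke the elementary identities (\ref{twotermdelta}) and (\ref{threetermdelta}). Your closing attention to the direction of the substitutions and the expansion conventions matches the paper's own remark that the order of substitution is chosen so that the ``existence'' conditions are easily verified.
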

\begin{proof}
We have, for instance, by the (partial) formal delta substitution
principle that
\begin{align}
\label{forwardenhanceddeltaproof}
x_{0}^{-1}\delta\left(\frac{x_{1}-x_{2}}{x_{0}}\right)f(x_{1}-x_{2},x_{1},x_{2})
&=x_{0}^{-1}\delta\left(\frac{x_{1}-x_{2}}{x_{0}}\right)f((x_{0}+x_{2})-x_{2},x_{1},x_{2})
\nonumber\\
&=x_{0}^{-1}\delta\left(\frac{x_{1}-x_{2}}{x_{0}}\right)f(x_{0},x_{1},x_{2}).
\end{align}
Similar substitutions on the other terms also leave the delta function
multiplied by the common factor, $f(x_{0},x_{1},x_{2})$.  Therefore
(\ref{twotermdelta}) and (\ref{threetermdelta}) yield the result.
\end{proof}
\begin{remark} \rm
In the proof of Proposition \ref{prop:forwardenhanceddelta} we
specifically chose the order in which to perform the substitutions,
rather than to reverse the equalities in
(\ref{forwardenhanceddeltaproof}), because it is easier to see that all
``existence'' type conditions are met.
\end{remark}
We also have the following converse proposition, which the author
believes has not previously appeared in full, but much of it may be
viewed as placing in a more elementary setting the relevant existing
arguments used in the axiomatic theory presented in \cite{LL}.  We
note that a very similar approach was taken in Lemma 2.1 \cite{Li2} where
the author had already proved, in a somewhat different manner, some of
the implications (related to the Jacobi identity and commutativity and
associativity properties, but not skew-associativity properties) of
the following proposition.

\begin{prop}
Let $f(x_{1},x_{2}),\,g(x_{1},x_{2}),$ and $h(x_{1},x_{2}) \in
V((x_{1}))((x_{2}))$.
We have certain implications among the following statements:
\begin{itemize}
\item
(A) We have 
\begin{align}
x_{0}^{-1}\delta\left(\frac{x_{1}-x_{2}}{x_{0}}\right)f(x_{1},x_{2})-
&x_{0}^{-1}\delta\left(\frac{-x_{2}+x_{1}}{x_{0}}\right)g(x_{2},x_{1})
\nonumber\\
&-x_{1}^{-1}\delta\left(\frac{x_{2}+x_{0}}{x_{1}}\right)h(x_{2},x_{0})=0.
\label{eq:A}
\end{align}
\item
(B)
There exists some $m_{1} \geq 0$ such that
\begin{align*}
(x_{1}-x_{2})^{m_{1}}(f(x_{1},x_{2})-g(x_{2},x_{1}))=0.
\end{align*}
\item
(C)
There exists some $m_{2} \geq 0$ such that
\begin{align*}
(x_{0}+x_{2})^{m_{2}}(f(x_{0}+x_{2},x_{2})-h(x_{2},x_{0}))=0.
\end{align*}
\item
(D)
There exists some $m_{3} \geq 0$ such that
\begin{align*}
(x_{1}-x_{0})^{m_{3}}(g(-x_{0}+x_{1},x_{1})-h(x_{1}-x_{0},x_{0}))=0.
\end{align*}
\item
(E) There exist $p_{1}(x_{1},x_{2}) \in V[[x_{1},x_{2}]]$
and $a_{1},b_{1},c_{1} \geq 0$ such that
\begin{align*}
f(x_{1},x_{2})=\frac{p_{1}(x_{1},x_{2})}{(x_{1}-x_{2})^{a_{1}}x_{1}^{b_{1}}x_{2}^{c_{1}}}
\end{align*}
\qquad and
\begin{align*}
g(x_{2},x_{1})=\frac{p_{1}(x_{1},x_{2})}{(-x_{2}+x_{1})^{a_{1}}x_{1}^{b_{1}}x_{2}^{c_{1}}}.
\end{align*}
\item
(F) There exist $p_{2}(x_{0},x_{2}) \in V[[x_{0},x_{2}]]$
and $a_{2},b_{2},c_{2} \geq 0$ such that
\begin{align*}
f(x_{0}+x_{2},x_{2})=\frac{p_{2}(x_{0},x_{2})}{x_{0}^{a_{2}}(x_{0}+x_{2})^{b_{2}}x_{2}^{c_{2}}}
\end{align*}
\qquad and
\begin{align*}
h(x_{2},x_{0})=\frac{p_{2}(x_{0},x_{2})}{x_{0}^{a_{2}}(x_{2}+x_{0})^{b_{2}}x_{2}^{c_{2}}}.
\end{align*}
\item
(G) There exist $p_{3}(x_{0},x_{1}) \in V[[x_{0},x_{1}]]$
and $a_{3},b_{3},c_{3} \geq 0$ such that
\begin{align*}
g(-x_{0}+x_{1},x_{1})=\frac{p_{3}(x_{0},x_{1})}{x_{0}^{a_{3}}x_{1}^{b_{3}}(-x_{0}+x_{1})^{c_{3}}}
\end{align*}
\qquad and
\begin{align*}
h(x_{1}-x_{0},x_{0})=\frac{p_{3}(x_{0},x_{1})}{x_{0}^{a_{3}}x_{1}^{b_{3}}(x_{1}-x_{0})^{c_{3}}}.
\end{align*}
\end{itemize}
Namely, we have:
\begin{align*}
(ia) \quad (A) \Rightarrow (B), &  \qquad \qquad (iia) \quad (B)
\Rightarrow (E), &  (iiia) \quad (E) \,\text{ \rm and }\, (F) \Rightarrow (A),\\
(ib) \quad (A) \Rightarrow (C), &   \qquad  \qquad (iib) \quad (C)
\Rightarrow (F), &  (iiib) \quad (E) \,\text{ \rm and }\,(G) \Rightarrow (A),\\
(ic) \quad (A) \Rightarrow (D), &  \qquad \qquad (iic) \quad (D)
\Rightarrow (G), \quad \text{   \rm   and }& (iiic) \quad (F) 
\,\text{ \rm and }\, (G) \Rightarrow (A).\\
\end{align*}
\label{prop:elem}
\end{prop}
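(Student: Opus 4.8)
The plan is to organize the nine implications into the three displayed groups and to treat each group by a single uniform mechanism. Group $(i)$ extracts the commutativity/associativity-type relations from the delta identity $(\ref{eq:A})$ by ``slicing'' with a residue; group $(ii)$ upgrades each such relation to a rationality statement via an intersection-of-Laurent-spaces argument; and group $(iii)$ is essentially a direct application of Proposition \ref{prop:forwardenhanceddelta}.

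For group $(i)$ the mechanism is: multiply $(\ref{eq:A})$ by a power of one variable and apply the corresponding $\mathrm{Res}$. To prove $(ia)$, I would multiply by $x_0^{k}$ and apply $\mathrm{Res}_{x_0}$. The first two terms collapse---using that for $k\ge 0$ the two expansions of $(x_1-x_2)^{k}$ coincide as a polynomial---into $(x_1-x_2)^{k}\bigl(f(x_1,x_2)-g(x_2,x_1)\bigr)$, while the third term produces $\mathrm{Res}_{x_0}\bigl[x_0^{k}x_1^{-1}\delta(\tfrac{x_2+x_0}{x_1})h(x_2,x_0)\bigr]$, which vanishes as soon as $k$ exceeds the order to which $h(x_2,x_0)$ is truncated below in $x_0$ (recall $h\in V((x_1))((x_2))$). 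This gives (B) with $m_1=k$. For $(ib)$ and $(ic)$ I would run the identical scheme but slice in $x_1$, respectively $x_2$; here one first uses the delta-substitution principle (Proposition \ref{prop:deltasub}) to bring the two surviving functions to the substituted arguments appearing in (C), (D) (e.g.\ against $x_0^{-1}\delta(\tfrac{x_1-x_2}{x_0})$ one replaces $f(x_1,x_2)$ by $f(x_0+x_2,x_2)$, and $x_1^{k}$ reduces to $(x_0+x_2)^{k}$), and the eliminated term vanishes by the lower truncation of $g$ in $x_1$ (resp.\ of $f$ in $x_2$). The three choices of slicing variable are precisely the three ``slots'' of the underlying $\mathcal{S}_3$-symmetry.

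For group $(ii)$, each hypothesis equates two series living in \emph{opposite} iterated Laurent spaces. For instance (B) asserts $(x_1-x_2)^{m_1}f(x_1,x_2)=(x_1-x_2)^{m_1}g(x_2,x_1)$, where the left side lies in $V((x_1))((x_2))$ and the right in $V((x_2))((x_1))$; the common value therefore lies in $V((x_1))((x_2))\cap V((x_2))((x_1))=V[[x_1,x_2]][x_1^{-1},x_2^{-1}]$ and so may be written $p_1(x_1,x_2)/(x_1^{b_1}x_2^{c_1})$ with $p_1\in V[[x_1,x_2]]$. Dividing by $(x_1-x_2)^{m_1}$ and re-expanding in each of the two spaces yields exactly (E) with $a_1=m_1$. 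The implications $(C)\Rightarrow(F)$ and $(D)\Rightarrow(G)$ are the same after the relevant change of variables. For group $(iii)$, given any two of (E), (F), (G) I would choose the single three-variable function $F(x_0,x_1,x_2)=P/(x_0^{a}x_1^{b}x_2^{c})$ whose numerator realizes one of the two hypotheses exactly; that hypothesis then yields two of the three specializations of $F$ occurring in Proposition \ref{prop:forwardenhanceddelta} directly, and the second hypothesis is used to identify the remaining specialization with the correct one of $f,g,h$. Proposition \ref{prop:forwardenhanceddelta} then returns $(\ref{eq:A})$.

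The hard part, and where I would spend the most care, is twofold, and both difficulties are really about expansion bookkeeping. First, throughout group $(i)$ one must track exactly which iterated Laurent space each of $f,g,h$ (and each of its substituted forms) inhabits and in which variable each binomial and delta is expanded; a single misattributed truncation flips a residue term between vanishing and surviving, which is the whole point of the computation. Second, the most delicate single step is the identification in group $(iii)$: having fixed one $F$, showing that its \emph{third} specialization equals the expected one of $f,g,h$ is not automatic, since the two hypotheses a priori furnish different numerators. The maneuver I expect to need there is to clear the (nonnegative-power) denominators, turning an equality of series in, say, $V((x_0))((x_2))$ into an honest identity in $V[[x_0,x_2]]$ that is expansion-independent, and then to re-expand it in $V((x_2))((x_0))$; it is exactly this step that forces the use of two of (E), (F), (G) rather than one.
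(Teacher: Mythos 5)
Your proposal is correct and follows essentially the same route as the paper's proof: group $(i)$ via lower truncation in the eliminated variable (your residue-slicing is just the coefficient-extraction form of this) after the delta-function substitutions, group $(ii)$ via the observation that the two sides of each weak relation live in opposite iterated Laurent spaces so their common value lies in $V[[x_1,x_2]][x_1^{-1},x_2^{-1}]$, followed by careful (non-simultaneous) cancellation, and group $(iii)$ by matching numerators via the formal Taylor theorem and then invoking Proposition \ref{prop:forwardenhanceddelta}. The one delicate point you flag---identifying the third specialization by clearing denominators and re-expanding---is exactly the step the paper handles by normalizing $a_1=a_2$, $b_1=b_2$, $c_1=c_2$ and concluding $p_2(x_0,x_2)=p_1(x_0+x_2,x_2)$.
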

\begin{remark} \rm
In \cite{LL} the authors essentially used only statements
(A), (B), (C), (E) and (F).  It is an application of the principles of
statements (D) and (G) which leads to the new notion of weak
skew-associativity.  
\end{remark}
\begin{remark} \rm
It is also easy to see that we also have some converse implications
from among the above list such as $(E) \Rightarrow (B)$, $(F)
\Rightarrow (C)$ and $(G) \Rightarrow (D)$.  The reader will also soon
see that there are various implications for this later, but we shall
not use these results in this paper so we shall make no further
mention of them.
\end{remark}
\begin{proof}
The proofs of (ia), (ib), and (ic) are similar.  For (ia) we note that
(A) trivially implies that the left-hand side of (\ref{eq:A}) is lower
truncated in powers of $x_{0}$.  Further, the third term in the
left-hand side of (\ref{eq:A}) is visibly lower truncated in powers of
$x_{0}$, and therefore the sum of the remaining two terms must be
lower truncated in powers of $x_{0}$, which precisely yields (B).  The
proofs of (ib) and (ic) similarly follow from the lower truncation, in
the left-hand side of (\ref{eq:A}), of $x_{1}$ and $x_{2}$
respectively, after the obvious (especially in light of the statements
of (C) and (D)) delta function substitutions are made.

The proofs of (iia), (iib) and (iic) are similar.  We show only (iia).  Since
$f(x_{1},x_{2}) \in V((x_{1}))((x_{2}))$ and since $g(x_{2},x_{1}) \in
V((x_{2}))((x_{1}))$ we must have that
$(x_{1}-x_{2})^{m_{1}}f(x_{1},x_{2})$ and
$(x_{1}-x_{2})^{m_{1}}g(x_{2},x_{1})$ are both $\in V((x_{1},x_{2}))$.
So there is some $p_{1}(x_{1},x_{2}) \in V[[x_{1},x_{2}]]$ and $b, c
\geq 0$ such that
\begin{align*}
(x_{1}-x_{2})^{m_{1}}f(x_{1},x_{2})=(x_{1}-x_{2})^{m_{1}}g(x_{2},x_{1})
=\frac{p_{1}(x_{1},x_{2})}{x_{1}^{b}x_{2}^{c}}.
\end{align*}
A careful application of partial associativity allows us to cancel
(not simultaneously!) the polynomial terms on the left-hand sides to
get the result (cf. Remarks 3.25 and 3.28 in \cite{LL}).

The proofs of (iiia), (iiib) and (iiic) are similar.  We show only (iiia).
That is, we assume the truth of statements $(E)$ and $(F)$ and prove
the truth of statement $(A)$.  We have
\begin{align*}
\frac{p_{2}(x_{0},x_{2})}{x_{0}^{a_{2}}(x_{0}+x_{2})^{b_{2}}x_{2}^{c_{2}}}
&=f(x_{0}+x_{2},x_{2})=e^{x_{2}\frac{\partial}{\partial
x_{0}}}f(x_{0},x_{2})\\ &= e^{x_{2}\frac{\partial}{\partial
x_{0}}}\frac{p_{1}(x_{0},x_{2})}{(x_{0}-x_{2})^{a_{1}}x_{0}^{b_{1}}x_{2}^{c_{1}}}\\
&=\frac{p_{1}(x_{0}+x_{2},x_{2})}{x_{0}^{a_{1}}(x_{0}+x_{2})^{b_{1}}x_{2}^{c_{1}}},
\end{align*}
where we used the consequence of Proposition \ref{prop:assocarith}.  We
use this consequence without comment below.  It is easy to see that one
can choose to have $a_{1}=a_{2}$, $b_{1}=b_{2}$ and $c_{1}=c_{2}$.
Assuming this, we have
\begin{align*}
p_{2}(x_{0},x_{2})=p_{1}(x_{0}+x_{2},x_{2}).
\end{align*}
Then we have
\begin{align*}
h(x_{2},x_{0})=\frac{p_{1}(x_{0}+x_{2},x_{2})}{x_{0}^{a_{1}}(x_{2}+x_{0})^{b_{1}}x_{2}^{c_{1}}}.
\end{align*} 
Considering
\begin{align*}
\frac{p_{1}(x_{1},x_{2})}{x_{0}^{a_{1}}x_{1}^{b_{1}}x_{2}^{c_{1}}}
\end{align*}
in place of $f(x_{0},x_{1},x_{2})$ in Proposition
\ref{prop:forwardenhanceddelta} now gives the result.
\end{proof}

\section{Vacuum-free vertex algebras}
\label{sec:vvalgebra}
\setcounter{equation}{0}

There are many variant definitions of vertex-type algebras.  For
instance, in \cite{LL} the authors recall Borcherds' notion of vertex
algebra \cite{B}, but using the formalism of the Jacobi identity (see
Definition 3.1.1 in \cite{LL}), which, among other things, lacks a
conformal vector, but does include a vacuum vector, the analogue of an
identity in a commutative associative algebra.  They purposely, for
reasons of expository clarity, redundantly state as axioms two
defining properties of the vacuum vector analogous to both the right
and left identity properties.  Indeed, they point out in Proposition
3.6.7 \cite{LL} that the vacuum property is redundant.  They further
show, in Remark 3.6.8 \cite{LL} that the creation property is not
redundant.  However, if we require as a separate axiom that the vertex
operator map be injective, this asymmetry between the analogues of
left and right identity disappears.  Indeed, injectivity follows from
the creation property, but by inserting this further redundancy into
the axioms it can be shown that either of the vacuum or creation
properties follows from the other when all the other axioms are
assumed (to see that the creation property is now redundant, see Remark
2.2.4 in \cite{FHL}).  Because of this, we shall use injectivity in
our statement of the axioms.

In the spirit of studying rings without identity, we may ask what we
would have if we removed the vacuum vector altogether.  Indeed, many
of the various versions of vertex-type algebra already form a
hierarchy of specialization as, for instance, vertex algebras
specialize to quasi- (or M\"obius) vertex algebras (cf. \cite{FHL}),
which in turn specialize to vertex operator algebras.  So there is
already ample precedent for a layered theory which we would be
extending.  For further justification, we note that the main axiom of
any version of vertex-type algebra is some form of the Jacobi
identity.  It is hoped that by removing the assumption of having a
vacuum vector, we avoid any premature distractions from this main
axiom in the early development of the theory and that this development
also shows more precisely the natural role that the vacuum vector
plays vis-\`a-vis the Jacobi identity when we specialize to that case.
With this as motivation, rather than any particular examples (although
see \cite{HL} and \cite{BD}) and further, since it turns out that we
can recover many results even in this pared-down setting, we proceed
to define a vacuum-free vertex algebra:
 
\begin{defi} \rm
A \it{vacuum-free vertex algebra} \rm is a vector space equipped,
first, with a linear map (the \it{vertex operator map}) $V
\otimes V \rightarrow V[[x,x^{-1}]]$, \rm or equivalently, a linear
map
\begin{align*}
Y(\,\cdot\,,x): \quad &V \, \rightarrow \, (\text{\rm End}V)[[x,x^{-1}]]\\
&v \, \mapsto \, Y(v,x)=\sum_{n \in \mathbb{Z}}v_{n}x^{-n-1}.
\end{align*}
We call $Y(v,x)$ the \it{vertex operator associated with} $v$.  \rm We
assume that the map
\begin{align*}
Y(\,\cdot\,,x): \quad &V \, \rightarrow \, (\text{\rm End}V)[[x,x^{-1}]]
\end{align*}
is injective.  We further assume that
\begin{align*}
Y(u,x)v \in V((x))
\end{align*}
for all $u,v \in V$.  Finally, we require that the
\it{Jacobi identity} \rm is satisfied:
\begin{align*}
x_{0}^{-1}\delta\left(\frac{x_{1}-x_{2}}{x_{0}}\right)Y(u,x_{1})Y(v,x_{2})&-
x_{0}^{-1}\delta\left(\frac{-x_{2}+x_{1}}{x_{0}}\right)Y(v,x_{2})Y(u,x_{1})\\
&=x_{1}^{-1}\delta\left(\frac{x_{2}+x_{0}}{x_{1}}\right)Y(Y(u,x_{0})v,x_{2}).
\end{align*}
\end{defi}
\begin{remark} \rm
It is well known that rngs can be embedded in rings.  Analogously, it
is not difficult to show that any vacuum-free vertex algebra can be
embedded in a vertex algebra.  We shall give a one line proof here
which uses a powerful representation theory result.  The adjoint
representation of any given vacuum-free vertex algebra yields a set of
mutually local vertex operators, and by, for instance, Theorem 5.5.18
in \cite{LL}, these operators generate a vertex algebra.
\end{remark}
We can immediately apply Proposition \ref{prop:elem} to obtain the
next two results.
\begin{prop}
\label{prop:vvalgrel}
Let $V$ be a vacuum-free vertex algebra.  For all $u,v,w \in V$, we have:
\begin{itemize}
\item
There exists some $m_{1} \geq 0$ such that
\begin{align*}
(x_{1}-x_{2})^{m_{1}}\left(Y(u,x_{1})Y(v,x_{2})-Y(v,x_{2})Y(u,x_{1})\right)=0
\end{align*}
(weak commutativity).
\item
There exists some $m_{2} \geq 0$ such that
\begin{align*}
(x_{0}+x_{2})^{m_{2}}\left(Y(u,x_{0}+x_{2})Y(v,x_{2})w-Y(Y(u,x_{0})v,x_{2})w\right)=0
\end{align*}
(weak associativity).
\item
There exists some $m_{3} \geq 0$ such that
\begin{align*}
(x_{1}-x_{0})^{m_{3}}\left(Y(v,-x_{0}+x_{1})Y(u,x_{1})w-Y(Y(u,x_{0})v,x_{1}-x_{0})w\right)=0
\end{align*}
(weak skew-associativity).
\item
There exist $p_{1}(x_{1},x_{2}) \in V[[x_{1},x_{2}]]$ and
$a_{1},b_{1},c_{1} \geq 0$ such that
\begin{align*}
Y(u,x_{1})Y(v,x_{2})w=\frac{p_{1}(x_{1},x_{2})}{(x_{1}-x_{2})^{a_{1}}x_{1}^{b_{1}}x_{2}^{c_{1}}}
\end{align*}
and
\begin{align*}
Y(v,x_{2})Y(u,x_{1})w=\frac{p_{1}(x_{1},x_{2})}{(-x_{2}+x_{1})^{a_{1}}x_{1}^{b_{1}}x_{2}^{c_{1}}}
\end{align*}
(formal commutativity).
\item
There exist $p_{2}(x_{0},x_{2}) \in V[[x_{0},x_{2}]]$ and
$a_{2},b_{2},c_{2} \geq 0$ such that
\begin{align*}
Y(u,x_{0}+x_{2})Y(v,x_{2})w=\frac{p_{2}(x_{0},x_{2})}{x_{0}^{a_{2}}
(x_{0}+x_{2})^{b_{2}}x_{2}^{c_{2}}}
\end{align*}
and
\begin{align*}
Y(Y(u,x_{0})v,x_{2})w=\frac{p_{2}(x_{0},x_{2})}{x_{0}^{a_{2}}(x_{2}+x_{0})^{b_{2}}x_{2}^{c_{2}}}
\end{align*}
(formal associativity).
\item
There exist $p_{3}(x_{0},x_{1}) \in
V[[x_{0},x_{1}]]$ and $a_{3},b_{3},c_{3} \geq 0$ such that
\begin{align*}
Y(v,-x_{0}+x_{1})Y(u,x_{1})w=\frac{p_{3}(x_{0},x_{1})}{x_{0}^{a_{3}}x_{1}^{b_{3}}
(-x_{0}+x_{1})^{c_{3}}}
\end{align*}
and
\begin{align*}
Y(Y(u,x_{0})v,x_{1}-x_{0})w=\frac{p_{3}(x_{0},x_{1})}{x_{0}^{a_{3}}x_{1}^{b_{3}}
(x_{1}-x_{0})^{c_{3}}}
\end{align*}
(formal skew-associativity).
\end{itemize}
\end{prop}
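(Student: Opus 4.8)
The plan is to realize Proposition \ref{prop:vvalgrel} as nothing more than a translation of Proposition \ref{prop:elem} into the language of the vertex operator map, so that all six conclusions are read off from implications already in hand. Concretely, for fixed but arbitrary $u,v,w\in V$ I would set
\begin{align*}
f(x_1,x_2) &= Y(u,x_1)Y(v,x_2)w,\\
g(x_1,x_2) &= Y(v,x_1)Y(u,x_2)w,\\
h(x_1,x_2) &= Y(Y(u,x_2)v,x_1)w,
\end{align*}
so that $g(x_2,x_1)=Y(v,x_2)Y(u,x_1)w$ and $h(x_2,x_0)=Y(Y(u,x_0)v,x_2)w$ reproduce exactly the second and third terms appearing in statement (A).

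First I would check that $f,g,h\in V((x_1))((x_2))$, the standing hypothesis of Proposition \ref{prop:elem}; this is the one place where the truncation axiom $Y(a,x)b\in V((x))$ does its work. For $f$, the vector $Y(v,x_2)w$ already lies in $V((x_2))$, and applying $Y(u,x_1)$ coefficientwise keeps us in $V((x_1))((x_2))$. For $h$, I would expand $Y(u,x_2)v=\sum_n (u_nv)x_2^{-n-1}$, which is lower-truncated in $x_2$, whence $h(x_1,x_2)=\sum_n Y(u_nv,x_1)w\,x_2^{-n-1}$ is visibly in $V((x_1))((x_2))$; the check for $g$ is identical to that for $f$.

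Next I would observe that, for this choice of $f,g,h$, statement (A) of Proposition \ref{prop:elem} is precisely the Jacobi identity of the vacuum-free vertex algebra evaluated on $w$, with all terms collected on one side. Hence (A) holds automatically, and there is nothing further to prove beyond invoking the established implications.

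Finally I would harvest the six conclusions directly: weak commutativity is statement (B) via (ia), weak associativity is (C) via (ib), and weak skew-associativity is (D) via (ic); the three formal properties then follow by composition, formal commutativity being (E) via (iia), formal associativity (F) via (iib), and formal skew-associativity (G) via (iic). Re-expressing (B) through (G) through the dictionary above returns verbatim the six displayed assertions. The only genuine obstacle is clerical rather than conceptual: one must track the substitutions with care --- matching $g(x_2,x_1)$ and $h(x_2,x_0)$ to their intended operator expressions, and confirming that the shifted arguments $x_0+x_2$, $-x_0+x_1$ and $x_1-x_0$ in (C) and (D) land on the correct iterates --- since the underlying implications themselves have all already been established.
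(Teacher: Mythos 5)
Your proposal is correct and is precisely the paper's argument: the paper proves Proposition \ref{prop:vvalgrel} by the single remark that one ``can immediately apply Proposition \ref{prop:elem},'' which is exactly your dictionary $f(x_1,x_2)=Y(u,x_1)Y(v,x_2)w$, $g(x_2,x_1)=Y(v,x_2)Y(u,x_1)w$, $h(x_2,x_0)=Y(Y(u,x_0)v,x_2)w$ together with the truncation check and the implications (ia)--(iic). Your write-up just makes explicit the verification that $f,g,h\in V((x_1))((x_2))$ and the substitution bookkeeping that the paper leaves tacit (the only residual point, which the paper itself defers to Remark \ref{rem:ms}, is passing from the $w$-applied form of weak commutativity to the stated operator identity).
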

\begin{flushright} $\square$ \end{flushright}
\begin{remark} \rm
\label{rem:ms}
If we consider certain minimal values, which are ``obviously pole
clearing,'' that $m_{1},m_{2}$ and $m_{3}$ may be taken to be in the
above proposition, then it is easy to see that they could all be given
by the same function on suitable ordered pairs of vectors.  For instance,
$m_{1}(u,v)$ could be defined as the negative of the least integer
power of $x$ appearing in $Y(u,x)v$, whenever that power is negative,
and zero otherwise.  As a corollary to this, we see that we could
specify which vectors $m_{1},m_{2}$ and $m_{3}$ may depend on, using a
more refined statement, each one depending on only two vectors, but we
shall not need or want this refinement in this work (see, for
instance, Remarks 3.2.2 and 3.4.2 in \cite{LL}).
\end{remark}

\begin{prop}
\label{prop:two}
Any two of weak commutativity, weak associativity and weak
skew-associativity can replace the Jacobi identity in the definition
of the notion of vacuum-free vertex algebra.
\end{prop}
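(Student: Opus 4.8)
The plan is to show that each admissible pair of weak axioms is interchangeable with the Jacobi identity, that is, that in the presence of the remaining axioms (injectivity and the truncation condition $Y(u,x)v \in V((x))$) the Jacobi identity holds for all $u,v,w \in V$ if and only if the two chosen weak properties hold for all $u,v,w \in V$. The forward direction---that the Jacobi identity yields each of weak commutativity, weak associativity and weak skew-associativity---is already the content of the first three bullets of Proposition \ref{prop:vvalgrel}, so I would simply cite it. It then remains to prove the three reverse implications, and for these the engine is Proposition \ref{prop:elem}, applied with the substitutions $f(x_{1},x_{2})=Y(u,x_{1})Y(v,x_{2})w$, $g(x_{2},x_{1})=Y(v,x_{2})Y(u,x_{1})w$ and $h(x_{2},x_{0})=Y(Y(u,x_{0})v,x_{2})w$.

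First I would verify that, for each fixed $u,v,w$, these three series meet the hypotheses of Proposition \ref{prop:elem}, namely that after the appropriate renaming of variables each lies in $V((x_{1}))((x_{2}))$. This uses only the truncation axiom: since $Y(v,x_{2})w \in V((x_{2}))$, applying $Y(u,x_{1})$ coefficientwise keeps the result in $V((x_{1}))((x_{2}))$, and likewise $Y(u,x_{0})v \in V((x_{0}))$ forces $Y(Y(u,x_{0})v,x_{2})w$ into $V((x_{2}))((x_{0}))$. The essential observation is that this check never invokes the Jacobi identity, so it is valid whichever pair of weak axioms is taken as the hypothesis.

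With $f$, $g$ and $h$ so chosen, weak commutativity, weak associativity and weak skew-associativity are precisely statements (B), (C) and (D) of Proposition \ref{prop:elem}, while the Jacobi identity is statement (A). I would then treat the three cases in turn. From weak commutativity and weak associativity, i.e.\ (B) and (C), implications (iia) and (iib) give the formal statements (E) and (F), and (iiia) then yields (A). From weak commutativity and weak skew-associativity, i.e.\ (B) and (D), implications (iia) and (iic) give (E) and (G), and (iiib) yields (A). From weak associativity and weak skew-associativity, i.e.\ (C) and (D), implications (iib) and (iic) give (F) and (G), and (iiic) yields (A). In each case the Jacobi identity is recovered for all $u,v,w$, finishing the reverse direction.

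Because the genuinely delicate formal calculus has already been packaged into Proposition \ref{prop:elem}, the remaining effort is essentially bookkeeping: matching each weak vertex-algebraic axiom to the correct one of (B), (C), (D) under a consistent variable convention and assembling the six implications in the proper order. I expect the one point demanding care to be the space check of the previous paragraph---confirming that the relevant products and iterates genuinely lie in $V((x_{1}))((x_{2}))$ without appeal to the Jacobi identity---since it is precisely there that one must avoid circularly assuming the axiom being derived. I would also note explicitly that the injectivity axiom plays no role in this equivalence and merely persists as a fixed ambient hypothesis.
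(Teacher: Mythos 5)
Your proposal is correct and is exactly the paper's intended argument: the paper states that Proposition \ref{prop:two} follows immediately from Proposition \ref{prop:elem} (with the forward direction recorded in Proposition \ref{prop:vvalgrel}), leaving the proof as the identification of the Jacobi identity with (A) and the three weak properties with (B), (C), (D) under the substitutions $f=Y(u,x_{1})Y(v,x_{2})w$, $g=Y(v,x_{2})Y(u,x_{1})w$, $h=Y(Y(u,x_{0})v,x_{2})w$, which is precisely what you do. Your explicit check of the truncation hypotheses and your bookkeeping of the implications (iia)--(iic) and (iiia)--(iiic) fill in exactly the details the paper omits.
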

\begin{flushright} $\square$ \end{flushright}

\begin{remark} \rm
As we have seen, Proposition \ref{prop:two} really is a statement of 
formal calculus and does not need any special information from the
(vacuum-free) vertex algebra setting.
\end{remark}
\begin{remark} \rm
In previous treatments of vertex algebras (with a vacuum vector), as
far as the author is aware, weak skew-associativity has been neglected
since one can get by perfectly well with the other two weak properties.  
However, we shall see below that in some ways weak skew-associativity 
smoothes out some of the theory.
\end{remark}
\begin{remark} \rm
In the theory of vertex algebras with a vacuum vector, one obtains,
without extra assumption, a $\mathcal{D}$ (``derivation'') operator
which allows one to derive the skew-symmetry relation.  Since we do
not have a $\mathcal{D}$ operator at this stage we cannot get such a
relation, but we still have what we call ``vacuum-free
skew-symmetry,'' which is used occasionally without much comment in
other treatments.
\end{remark}
\begin{prop}
\label{prop:vfss}\bf{(vacuum-free skew-symmetry)} 
\it Let $V$ be a vacuum-free vertex algebra.  For all $u$ and $v \in V$, we have
\begin{align*}
Y(Y(u,x_{0})v,x_{2})=Y(Y(v, -x_{0})u,x_{2}+x_{0}).
\end{align*}
\end{prop}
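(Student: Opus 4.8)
The plan is to derive this identity directly from the Jacobi identity, by writing that identity twice — once for the ordered pair $(u,v)$ and once for $(v,u)$ — and then choosing the formal variables in the second copy so that its left-hand side becomes \emph{literally identical} to that of the first. Once the two left-hand sides agree, the two right-hand sides must agree, and a single residue extracts the desired relation. To set this up I would first record the Jacobi identity for $(u,v)$ in the variables $x_0,x_1,x_2$. Writing $P=Y(u,x_1)Y(v,x_2)$ and $Q=Y(v,x_2)Y(u,x_1)$, its left-hand side is a combination $c_P\,P+c_Q\,Q$ with delta-function coefficients $c_P=x_0^{-1}\delta\!\left(\frac{x_1-x_2}{x_0}\right)$ and $c_Q=-x_0^{-1}\delta\!\left(\frac{-x_2+x_1}{x_0}\right)$, while its right-hand side carries the iterate $Y(Y(u,x_0)v,x_2)$ against $x_1^{-1}\delta\!\left(\frac{x_2+x_0}{x_1}\right)$.

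Next I would write the Jacobi identity for the ordered pair $(v,u)$ in fresh variables $\tilde x_0,\tilde x_1,\tilde x_2$ and then substitute $\tilde x_1=x_2$, $\tilde x_2=x_1$, $\tilde x_0=-x_0$. This substitution is legitimate because the Jacobi identity is an identity of formal series and the sign change in $\tilde x_0$ is invertible. The point of this specific choice is that the two products appearing become exactly $P$ and $Q$ once more, while the iterate term turns into the skew iterate $Y(Y(v,-x_0)u,x_1)$ attached to $x_2^{-1}\delta\!\left(\frac{x_1-x_0}{x_2}\right)$ (after simplifying $x_1+\tilde x_0=x_1-x_0$).

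The key step is then to verify, using the binomial convention and the elementary delta identities of Proposition 3.1, that the coefficients of $P$ and of $Q$ in this transformed identity coincide term-by-term with $c_P$ and $c_Q$. The essential cancellation is that the sign $(-1)^n$ produced by comparing the expansion of $(-x_2+x_1)^n$ with that of $(x_1-x_2)^n$ is exactly compensated by the $(-1)^{-n-1}$ coming from $(-x_0)^{-n-1}$, so that after $\tilde x_0=-x_0$ the coefficients match precisely. Granting this, the two left-hand sides are equal, hence so are the right-hand sides:
\begin{align*}
x_1^{-1}\delta\left(\frac{x_2+x_0}{x_1}\right)Y(Y(u,x_0)v,x_2)=x_2^{-1}\delta\left(\frac{x_1-x_0}{x_2}\right)Y(Y(v,-x_0)u,x_1).
\end{align*}
Finally I would apply $\text{Res}_{x_1}$ to both sides. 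On the left the iterate does not involve $x_1$ and $\text{Res}_{x_1}x_1^{-1}\delta\!\left(\frac{x_2+x_0}{x_1}\right)=1$, leaving $Y(Y(u,x_0)v,x_2)$; on the right, $\text{Res}_{x_1}$ against $x_2^{-1}\delta\!\left(\frac{x_1-x_0}{x_2}\right)$ acts as the substitution $x_1\mapsto x_2+x_0$ (expanded in nonnegative powers of $x_0$), producing $Y(Y(v,-x_0)u,x_2+x_0)$. This is exactly the asserted identity.

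The main obstacle is precisely the bookkeeping in the key step: one must expand each delta function according to the binomial convention and track the sign factors carefully to confirm that the coefficients of $P$ and $Q$ genuinely coincide after the substitution $\tilde x_0=-x_0$, and, as always in this formal calculus, check that the relevant ``existence'' conditions hold so that the final residue and the delta substitutions are valid. Identifying the aligning substitution $(\tilde x_0,\tilde x_1,\tilde x_2)=(-x_0,x_2,x_1)$ is the crux of the argument; everything after it is routine.
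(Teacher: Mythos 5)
Your proposal is correct and takes essentially the same route as the paper: the paper's proof likewise observes that the left-hand side of the Jacobi identity is invariant under $(x_{0},x_{1},x_{2};u,v)\leftrightarrow(-x_{0},x_{2},x_{1};v,u)$, equates the resulting right-hand sides to get $x_{1}^{-1}\delta\bigl(\frac{x_{2}+x_{0}}{x_{1}}\bigr)Y(Y(u,x_{0})v,x_{2})=x_{2}^{-1}\delta\bigl(\frac{x_{1}-x_{0}}{x_{2}}\bigr)Y(Y(v,-x_{0})u,x_{1})$, and then applies the two-term delta identity and the delta-substitution property before taking $\mathrm{Res}_{x_{1}}$. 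Your sign bookkeeping for why the invariance holds, and your reading of the final residue as the substitution $x_{1}\mapsto x_{2}+x_{0}$, are exactly what the paper leaves implicit.
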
 
\begin{proof}
Notice that the left-hand side of the Jacobi identity is invariant
under the substitutions $(x_{0},x_{1},x_{2};u,v) \leftrightarrow
(-x_{0},x_{2},x_{1};v,u)$.  This means that we get the following
relation coming from the right-hand side:
\begin{align*}
x_{1}^{-1}\delta\left(\frac{x_{2}+x_{0}}{x_{1}}\right)Y(Y(u,x_{0})v,x_{2})
&=x_{2}^{-1}\delta\left(\frac{x_{1}-x_{0}}{x_{2}}\right)Y(Y(v,-x_{0})u,x_{1})\\
&=x_{1}^{-1}\delta\left(\frac{x_{2}+x_{0}}{x_{1}}\right)Y(Y(v,
-x_{0})u,x_{2}+x_{0}),
\end{align*}
so that taking the residue with respect to $x_{1}$ yields the result.
\end{proof}
It turns out that it is enough to know vacuum-free skew-symmetry
together with any single one of weak commutativity, weak
associativity, or weak skew-associativity in order to recover the
entire Jacobi identity.  We prove each of these equivalences, arguing
in a similar spirit to the proofs given in \cite{LL}, where the
authors used certain classical guides.  Our classical guides use
relationships found in commutative associative algebras without
identity element, but we shall not have all such relationships.
Indeed, the analogues we use are as follows:
\[ 
\begin{array}{lr}
a(bc)=b(ac) & \text{corresponds to weak commutativity;}\\ 
a(bc)=(ab)c & \text{corresponds to weak associativity;}\\ 
a(bc)=(ba)c & \text{corresponds to weak skew-associativity;}\\ 
\text{and each of} & \\
a(bc)=(bc)a,\,(ab)c=(ba)c, \, a(bc)=a(cb) & \text{corresponds
to vacuum-free skew symmetry}.
\end{array}
\]
\begin{remark} \rm
Actually, the third listed analogue for vacuum-free skew symmetry must
be derived from the other two, which is due to the fact that an iterate
appears explicitly in the statement of vacuum-free skew symmetry.
\end{remark}
\begin{prop}
\label{prop:weakassocplusskew}
Weak associativity together with vacuum-free skew-symmetry can replace
the Jacobi identity in the definition of the notion of vacuum-free
vertex algebra.
\end{prop}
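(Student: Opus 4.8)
We must show that if $V$ carries a vertex operator map satisfying the injectivity and truncation hypotheses of a vacuum-free vertex algebra, and if $V$ satisfies both weak associativity and vacuum-free skew-symmetry, then the full Jacobi identity holds. The cleanest route is to reduce the Jacobi identity to weak commutativity together with weak associativity (which together imply Jacobi by Proposition 4.2), so the entire task becomes: derive weak commutativity from weak associativity and vacuum-free skew-symmetry.

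**Plan.** Let me describe what I'd prove.

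The classical guide tells the whole story: in a commutative associative algebra without identity, weak commutativity corresponds to $a(bc)=b(ac)$. I would obtain this by chaining weak associativity ($a(bc)=(ab)c$, my hypothesis) with the skew-symmetry relations ($a(bc)=(bc)a$, $(ab)c=(ba)c$, $a(bc)=a(cb)$). Concretely, one computes
$a(bc) = (ab)c = (ba)c = b(ac)$,
where the outer equalities are weak associativity (read forwards, then backwards) and the middle one is the symbol-level form $(ab)c=(ba)c$ of vacuum-free skew-symmetry. So at the level of formal calculus I want to mirror exactly this three-step chain. The precise statement I'd target first is the vertex-algebra avatar of $(ab)c=(ba)c$, i.e.\ an identity relating $Y(Y(u,x_0)v,x_2)w$ and $Y(Y(v,x_0')u,x_2')w$; this is precisely what Proposition \ref{prop:vfss} (vacuum-free skew-symmetry) delivers once I feed it an appropriate third vector and track the shift $x_2 \mapsto x_2+x_0$ and the sign flip $x_0 \mapsto -x_0$ inside the inner vertex operator.

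First I would write out weak associativity in its formal (pole-cleared) form from Proposition \ref{prop:vvalgrel}: there is $p_2$ with $Y(u,x_0+x_2)Y(v,x_2)w$ and $Y(Y(u,x_0)v,x_2)w$ both expressible through $p_2(x_0,x_2)$ over $x_0^{a_2}(x_0+x_2)^{b_2}x_2^{c_2}$ (with the two binomials expanded in opposite orders). Working with this common numerator is what makes the substitutions legitimate, since all the ``existence'' conditions are then manifest. Next I would apply vacuum-free skew-symmetry to rewrite the iterate $Y(Y(u,x_0)v,x_2)w$ in terms of $Y(Y(v,-x_0)u,x_2+x_0)w$, and then invoke weak associativity a second time (in the reverse direction) to unpack this iterate back into a product of the form $Y(v,\,\cdot\,)Y(u,\,\cdot\,)w$. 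The net effect, after using the consequence of Proposition \ref{prop:assocarith} to reconcile the various expansions of $(x_1-x_2)$ versus $(-x_2+x_1)$ and of shifted arguments, should be exactly the weak commutativity relation: some power $(x_1-x_2)^{m_1}$ annihilates $Y(u,x_1)Y(v,x_2)w - Y(v,x_2)Y(u,x_1)w$. Since $w$ is arbitrary and $Y(\cdot,x)$ is injective, this yields weak commutativity as an operator identity. Having both weak commutativity and weak associativity, Proposition \ref{prop:two} immediately returns the Jacobi identity.

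**Main obstacle.** The genuine difficulty is bookkeeping in the formal calculus rather than any conceptual leap: the inner substitutions $x_0 \mapsto -x_0$ and the base-point shift $x_2 \mapsto x_2+x_0$ supplied by skew-symmetry must be threaded through the pole-cleared numerator $p_2$ so that, after both applications of weak associativity, the resulting two series live in compatible spaces and their difference is genuinely killed by a single power of $(x_1-x_2)$. The delicate point is ensuring that every intermediate expression satisfies the summability/existence hypotheses (so that partial associativity and the delta-substitution principle apply), which is exactly why I would stay in the $p_2$-numerator picture and apply the substitutions in the safe order, as flagged in the remark following Proposition \ref{prop:forwardenhanceddelta}. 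Matching the exponent shifts so that the two expressions share a common denominator — the step corresponding to choosing compatible $a,b,c$ in the proof of (iiia) — is where I expect to spend the most care.
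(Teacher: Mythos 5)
Your overall strategy is sound and your intermediate claims are true, but you take a genuinely different and noticeably longer route than the paper. The paper follows the guide $a(bc)=(ab)c=(ba)c$ and stops there: one application of weak associativity with the harmless substitution $x_{0}\mapsto -x_{0}$, $x_{2}\mapsto x_{1}$ gives $(x_{1}-x_{0})^{l}Y(u,-x_{0}+x_{1})Y(v,x_{1})w=(x_{1}-x_{0})^{l}Y(Y(u,-x_{0})v,x_{1})w$, and one application of vacuum-free skew-symmetry converts the right-hand side to $(x_{1}-x_{0})^{l}Y(Y(v,x_{0})u,x_{1}-x_{0})w$; that is already weak skew-associativity, so Proposition \ref{prop:two} finishes the proof in two lines. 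You instead push one step further along the chain, $a(bc)=(ab)c=(ba)c=b(ac)$, aiming at weak commutativity. That target is reachable, but it forces a second, reversed application of weak associativity under the base-point shift $x_{2}\mapsto x_{2}+x_{0}$ (with $x_{0}\mapsto -x_{0}$), followed by an ``unshifting'' of the resulting $(x_{0},x_{2})$-identity back into the $(x_{1},x_{2})$-variables to read off $(x_{1}-x_{2})^{m_{1}}$-cancellation --- precisely the delicate expansion bookkeeping you flag as your main obstacle but do not carry out, and which in effect reproduces the content of the $(C)\Rightarrow(F)$, $(D)\Rightarrow(G)$, $(F)\,\&\,(G)\Rightarrow(A)\Rightarrow(B)$ circuit of Proposition \ref{prop:elem} rather than using it. (Also, injectivity of $Y$ is not what upgrades your $w$-dependent identity to an operator identity; that is the role of Remark \ref{rem:ms}.) The comparison is instructive: the availability of the two-step shortcut is exactly the payoff of treating weak skew-associativity as a named property on equal footing with the other two, which is the point of this paper; your longer chain is the one a reader restricted to the weak commutativity/weak associativity pair would be forced into.
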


\begin{proof}
We follow this analogue: $a(bc)=(ab)c=(ba)c$.  Let $V$ be a
vacuum-free vertex algebra.  We shall show that we get weak
skew-associativity, which will be enough.  In fact, for all $u,v \in
V$, there exists $l \geq 0$ such that
\begin{align*}
(x_{1}-x_{0})^{l}Y(u,-x_{0}+x_{1})Y(v,x_{1})w
&=(x_{1}-x_{0})^{l}Y(Y(u,-x_{0})v,x_{1})w\\
&=(x_{1}-x_{0})^{l}Y(Y(v,x_{0})u,x_{1}-x_{0})w.
\end{align*}
\end{proof}
\begin{prop}
\label{prop:weakskewassocplusskew}
Weak skew-associativity together with vacuum-free skew-symmetry can
replace the Jacobi identity in the definition of the notion of
vacuum-free vertex algebra.
\end{prop}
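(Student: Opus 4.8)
The plan is to mirror the proof of Proposition~\ref{prop:weakassocplusskew}, interchanging the roles of weak associativity and weak skew-associativity. By Proposition~\ref{prop:two}, any two of weak commutativity, weak associativity and weak skew-associativity already recover the full Jacobi identity; since weak skew-associativity is assumed here, it suffices to derive one additional weak property, and the natural target is weak associativity. The classical guide I would follow is the chain $a(bc)=(ba)c=(ab)c$, in which the first equality is weak skew-associativity, the second is vacuum-free skew-symmetry in its iterate form $(ba)c=(ab)c$, and the resulting identity $a(bc)=(ab)c$ is precisely weak associativity.

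Concretely, I would begin from weak skew-associativity as recorded in Proposition~\ref{prop:vvalgrel}, written with $u$ and $v$ interchanged, and choose $l \geq 0$ large enough to clear poles, so that
\begin{align*}
(x_{1}-x_{0})^{l}Y(u,-x_{0}+x_{1})Y(v,x_{1})w
=(x_{1}-x_{0})^{l}Y(Y(v,x_{0})u,x_{1}-x_{0})w.
\end{align*}
I would then rewrite the iterate on the right by applying vacuum-free skew-symmetry (Proposition~\ref{prop:vfss}) with $u$ and $v$ interchanged and $x_{2}$ specialized to $x_{1}-x_{0}$; since $(x_{1}-x_{0})+x_{0}=x_{1}$ by the consequence of Proposition~\ref{prop:assocarith}, this produces the identity $Y(Y(v,x_{0})u,x_{1}-x_{0})=Y(Y(u,-x_{0})v,x_{1})$. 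Chaining the two displays gives
\begin{align*}
(x_{1}-x_{0})^{l}Y(u,-x_{0}+x_{1})Y(v,x_{1})w
=(x_{1}-x_{0})^{l}Y(Y(u,-x_{0})v,x_{1})w,
\end{align*}
and the substitutions $x_{0}\mapsto -x_{0}$ and $x_{1}\mapsto x_{2}$ (which carry $(x_{1}-x_{0})^{l}$ to $(x_{0}+x_{2})^{l}$) turn this into exactly the statement of weak associativity. Proposition~\ref{prop:two} then closes the argument.

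The step I expect to demand the most care is not conceptual but the pole-clearing bookkeeping: I must check that a single exponent $l$ can be chosen so that $(x_{1}-x_{0})^{l}$ simultaneously clears all negative powers on both sides, that the formal substitutions used (both in specializing weak skew-associativity and in applying vacuum-free skew-symmetry) satisfy the relevant ``existence'' conditions, and that the common factor may legitimately be removed at the end. As in the proof of Proposition~\ref{prop:elem} and of the preceding proposition, this is dispatched by noting that the relevant series lie in spaces of the form $V((x_{1}))((x_{2}))$ and invoking partial associativity; up to this routine bookkeeping the argument is simply the mirror image of Proposition~\ref{prop:weakassocplusskew}.
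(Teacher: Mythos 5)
Your proposal is correct and follows essentially the same route as the paper's proof: derive weak associativity from weak skew-associativity combined with vacuum-free skew-symmetry (the guide $a(bc)=(ba)c=(ab)c$), then invoke Proposition~\ref{prop:two}. The only differences are cosmetic — you perform the variable substitutions $x_{0}\mapsto -x_{0}$, $x_{1}\mapsto x_{2}$ at the end rather than at the start, and apply vacuum-free skew-symmetry at $x_{2}=x_{1}-x_{0}$ instead of directly.
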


\begin{proof}
We follow this analogue: $a(bc)=(ba)c=(ab)c$.  Let $V$ be a
vacuum-free vertex algebra.  We shall show that we get weak
associativity which will be enough.  In fact, for all $u,v,w \in V$,
there exists $l \geq 0$ such that
\begin{align*}
(x_{0}+x_{2})^{l}Y(v,x_{0}+x_{2})Y(u,x_{2})w
&=(x_{0}+x_{2})^{l}Y(Y(u,-x_{0})v,x_{2}+x_{0})w\\
&=(x_{0}+x_{2})^{l}Y(Y(v,x_{0})u,x_{2})w.
\end{align*}
\end{proof}

\begin{prop}
\label{prop:weakcommplusskew}
Weak commutativity together with vacuum-free skew-symmetry can replace
the Jacobi identity in the definition of the notion of vacuum-free
vertex algebra.
\end{prop}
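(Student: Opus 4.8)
By Proposition \ref{prop:two} it is enough to derive one of weak associativity or weak skew-associativity from weak commutativity and vacuum-free skew-symmetry, after which that proposition supplies the Jacobi identity. The first thing to notice is that the clean three-term chain used in the two preceding proofs is \emph{not} available here. Weak commutativity relates one product of vertex operators to another, whereas vacuum-free skew-symmetry relates one iterate to another; no single step carries a product to an iterate. (Among the classical analogues listed for vacuum-free skew-symmetry, only $(ab)c=(ba)c$ keeps the acted-on vector $w$ in the rightmost slot and is therefore directly usable, and it is an iterate-to-iterate move.) Chasing the analogues only produces the circular statement that weak associativity for $(u,v)$ is equivalent, modulo weak commutativity and vacuum-free skew-symmetry, to weak associativity for the swapped pair $(v,u)$.

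I would therefore route the argument through the formal calculus of Section 3. Weak commutativity gives formal commutativity (the formal commutativity bullet of Proposition \ref{prop:vvalgrel}): a single $p_1(x_1,x_2)\in V[[x_1,x_2]]$ with $R:=p_1/\big((x_1-x_2)^{a_1}x_1^{b_1}x_2^{c_1}\big)$ whose two expansions are $Y(u,x_1)Y(v,x_2)w$ and $Y(v,x_2)Y(u,x_1)w$. Substituting $x_1=x_2+x_0$ under the delta functions and recombining by the three-term identity (\ref{threetermdelta}) — the manipulations behind Proposition \ref{prop:forwardenhanceddelta} — collapses the left side of the Jacobi identity to
\[
x_0^{-1}\delta\!\left(\frac{x_1-x_2}{x_0}\right)Y(u,x_1)Y(v,x_2)w-x_0^{-1}\delta\!\left(\frac{-x_2+x_1}{x_0}\right)Y(v,x_2)Y(u,x_1)w=x_1^{-1}\delta\!\left(\frac{x_2+x_0}{x_1}\right)Y(u,x_0+x_2)Y(v,x_2)w.
\]
This is the Jacobi identity with the product $Y(u,x_0+x_2)Y(v,x_2)w$ in place of the iterate $Y(Y(u,x_0)v,x_2)w$; taking $\mathrm{Res}_{x_1}$, what remains to be proved is precisely that this product equals the iterate, namely formal associativity, which is the one piece weak commutativity cannot see.

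Vacuum-free skew-symmetry is what must supply that piece: it rewrites the iterate $Y(Y(u,x_0)v,x_2)w$ as the swapped iterate $Y(Y(v,-x_0)u,x_2+x_0)w$, and Proposition \ref{prop:assocarith} together with formal commutativity rewrites the product for $(u,v)$ as the product for $(v,u)$ under $x_2\mapsto x_2+x_0,\ x_0\mapsto-x_0$. The main obstacle is that these two rewritings only match the associativity defect for $(u,v)$ with that for $(v,u)$, so the resulting symmetry does not by itself force the defect to vanish — this is the same circularity seen at the level of the analogues. To break it I would bring in the two hypotheses not yet used: the lower-truncation $Y(u,x_0)v\in V((x_0))$, which bounds the order of the pole in $x_0$ of both the iterate and the product expansion, and the injectivity of $Y(\,\cdot\,,x)$, which pins every $x_0$-coefficient of the iterate to a genuine vertex operator. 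I expect a downward induction on the $x_0$-pole order, reading off leading coefficients through the defect symmetry and invoking injectivity, to finish the proof; showing that injectivity indeed rigidifies the coefficients of the product expansion into vertex operators is the step I anticipate being the most delicate.
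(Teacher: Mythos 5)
There is a genuine gap. Your argument never actually establishes formal/weak associativity, which you yourself identify as ``the one piece weak commutativity cannot see.'' After reducing the Jacobi identity to the claim that $Y(u,x_0+x_2)Y(v,x_2)w$ equals $Y(Y(u,x_0)v,x_2)w$, you observe that vacuum-free skew-symmetry only matches the associativity defect for $(u,v)$ with the defect for $(v,u)$, and you propose to break this circularity by ``a downward induction on the $x_0$-pole order'' that you do not carry out, explicitly flagging its key step as the one you ``anticipate being the most delicate.'' That unexecuted step is precisely where the content of the proposition lies, so the proof is incomplete as written.

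The missing idea is signalled by your opening diagnosis, which is mistaken: a chain of the classical analogues \emph{is} available, and the paper's proof follows exactly the chain $a(bc)=(bc)a=(cb)a=a(cb)=c(ab)=(ab)c$. The device you are not seeing is to wrap the entire expression in an outer vertex operator $Y(\,\cdot\,,x_3)$. Once $Y(u,x_0+x_2)Y(v,x_2)w$ sits in the vector slot of an outer operator, the product $Y(v,x_2)w$ can be treated as a single vector, and vacuum-free skew-symmetry --- which, as you note, is an iterate-to-iterate relation --- becomes applicable to it. Concretely, for suitable $l\geq 0$,
\begin{align*}
(x_0+x_2)^{l}Y(Y(u,x_0+x_2)Y(v,x_2)w,x_3)
&=(x_0+x_2)^{l}Y(Y(Y(v,x_2)w,-x_0-x_2)u,x_3+(x_0+x_2))\\
&=\cdots=(x_0+x_2)^{l}Y(Y(Y(u,x_0)v,x_2)w,x_3),
\end{align*}
where weak commutativity is used exactly once in the middle of the chain (to exchange $Y(u,x_0)$ and $Y(w,-x_2)$) and every other step is vacuum-free skew-symmetry applied to an inner or outer iterate. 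Injectivity of the vertex operator map then strips off the outer $Y(\,\cdot\,,x_3)$ to yield weak associativity, and Proposition \ref{prop:two} finishes the proof. So injectivity is indeed essential, as you suspected, but it enters in this single global cancellation rather than through a coefficient-by-coefficient induction; your delta-function reduction via (\ref{threetermdelta}) and formal commutativity, while plausible as far as it goes, leaves the essential associativity statement unproved.
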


\begin{proof}
We follow this analogue: $a(bc)=(bc)a=(cb)a=a(cb)=c(ab)=(ab)c$.  Let
$V$ be a vacuum-free vertex algebra.  We shall show that we get weak
associativity, which will be enough.  For all $u,v,w \in V$, it is
easy to see that there exists $l \geq 0$ such that:
\begin{align*}
&(x_{0}+x_{2})^{l}Y(Y(u,x_{0}+x_{2})Y(v,x_{2})w,x_{3})\\
&=(x_{0}+x_{2})^{l}Y(Y(Y(v,x_{2})w,-x_{0}-x_{2})u,x_{3}+(x_{0}+x_{2}))\\
&=(x_{0}+x_{2})^{l}Y(Y(Y(w,-x_{2})v,-x_{0})u,x_{3}+(x_{0}+x_{2}))\\
&=(x_{0}+x_{2})^{l}Y(Y(u,x_{0})Y(w,-x_{2})v,x_{3}+x_{2})\\
&=(x_{0}+x_{2})^{l}Y(Y(w,-x_{2})Y(u,x_{0})v,x_{3}+x_{2})\\
&=(x_{0}+x_{2})^{l}Y(Y(Y(u,x_{0})v,x_{2})w,x_{3}),
\end{align*}
so that the result follows from the injectivity of the vertex operators.
\end{proof}
\begin{remark} \rm
We did not use the injectivity property of vertex operators in the
proofs of Propositions \ref{prop:weakassocplusskew} and
\ref{prop:weakskewassocplusskew}, but we did use it in the proof of
Proposition \ref{prop:weakcommplusskew}.
\end{remark}

\section{Modules}
\label{section:Module1}
\setcounter{equation}{0}

In this section we define the notion of module for a vacuum-free
vertex algebra and show a series of results paralleling those of
Section \ref{sec:vvalgebra}, with one significant exception.  We do
not have that (along with module skew-symmetry) module weak
commutativity can be a replacement axiom, although we do get that
module weak associativity and module weak skew-associativity may be
used as replacement axioms.  A heuristic reason for this may be seen
in the fact that in the commutative associative guides, ``$c$'' did
not remain in the rightmost position for the case of weak
commutativity, but it did for the other two cases.  We do have a
module weak skew-symmetry, which is in a contrast of sorts to the
situation with a vertex algebra, where one is tempted to ignore any
special skew-symmetric-like property of the module case, since the
underlying vertex algebra skew-symmetry is all that one needs.  We
have already done all the work for this section.  We state the results
for completeness.

\begin{defi} \rm
A \it{module} \rm for a vacuum-free vertex algebra is a vector space $W$
equipped with a linear map 
$V \otimes W \rightarrow W[[x,x^{-1}]]$, or
equivalently, a linear map
\begin{align*}
Y_{W}(\,\cdot\,,x):&V \, \rightarrow \, (\text{\rm End}W)[[x,x^{-1}]]\\
&v \, \mapsto \, Y_{W}(v,x)=\sum_{n \in \mathbb{Z}}v_{n}x^{-n-1}.
\end{align*}
We assume that
\begin{align*}
Y_{W}(u,x)w \in W((x))
\end{align*}
for all $u \in V$ and $w \in W$.  Then finally, we require that the
\it{Jacobi identity} \rm is satisfied:
\begin{align*}
x_{0}^{-1}\delta\left(\frac{x_{1}-x_{2}}{x_{0}}\right)Y_{W}(u,x_{1})Y_{W}(v,x_{2})w&-
x_{0}^{-1}\delta\left(\frac{-x_{2}+x_{1}}{x_{0}}\right)Y_{W}(v,x_{2})Y_{W}(u,x_{1})w\\
&=x_{1}^{-1}\delta\left(\frac{x_{2}+x_{0}}{x_{1}}\right)Y_{W}(Y(u,x_{0})v,x_{2})w.
\end{align*}
\end{defi}
\begin{remark} \rm
This definition adheres to the principle that a module should satisfy
all the defining properties of a vertex algebra that make sense, which
is essentially the \it{a priori} \rm motivation given on page 117 in
\cite{LL}.
\end{remark}
We present the results in parallel order to those in Section
\ref{sec:vvalgebra}.
\begin{prop}
\label{prop:vacfreemodstuff}
Let $V$ be a vacuum-free vertex algebra with module $W$.  For all $u,v
\in V,$ and $w \in W$, we have:
\begin{itemize}
\item There exists some $m_{1} \geq 0$ such that
\begin{align*}
(x_{1}-x_{2})^{m_{1}}\left(Y_{W}(u,x_{1})Y_{W}(v,x_{2})-Y_{W}(v,x_{2})Y_{W}(u,x_{1})\right)=0
\end{align*}
(weak commutativity).
\item
There exists some $m_{2} \geq 0$ such that
\begin{align*}
(x_{0}+x_{2})^{m_{2}}\left(Y_{W}(u,x_{0}+x_{2})Y_{W}(v,x_{2})w-Y_{W}(Y(u,x_{0})v,
x_{2})w\right)=0
\end{align*}
(weak associativity).
\item
There exists some $m_{3} \geq 0$ such that
\begin{align*}
(x_{1}-x_{0})^{m_{3}}\left(Y_{W}(v,-x_{0}+x_{1})Y_{W}(u,x_{1})w-Y_{W}(Y(u,x_{0})v,
x_{1}-x_{0})w\right)=0
\end{align*}
(weak skew-associativity).
\item
There exist $p_{1}(x_{1},x_{2}) \in
W[[x_{1},x_{2}]]$ and $a_{1},b_{1},c_{1} \geq 0$ such that
\begin{align*}
Y_{W}(u,x_{1})Y_{W}(v,x_{2})w=\frac{p_{1}(x_{1},x_{2})}{(x_{1}-x_{2})^
{a_{1}}x_{1}^{b_{1}}x_{2}^{c_{1}}}
\end{align*}
and
\begin{align*}
Y_{W}(v,x_{2})Y_{W}(u,x_{1})w=\frac{p_{1}(x_{1},x_{2})}{(-x_{2}+x_{1})^
{a_{1}}x_{1}^{b_{1}}x_{2}^{c_{1}}}
\end{align*}
(formal commutativity).
\item
There exist $p_{2}(x_{0},x_{2}) \in
W[[x_{0},x_{2}]]$ and $a_{2},b_{2},c_{2} \geq 0$ such that
\begin{align*}
Y_{W}(u,x_{0}+x_{2})Y_{W}(v,x_{2})w=\frac{p_{2}(x_{0},x_{2})}{x_{0}^
{a_{2}}(x_{0}+x_{2})^{b_{2}}x_{2}^{c_{2}}}
\end{align*}
and
\begin{align*}
Y_{W}(Y(u,x_{0})v,x_{2})w=\frac{p_{2}(x_{0},x_{2})}{x_{0}^{a_{2}}(x_{2}+x_{0})^
{b_{2}}x_{2}^{c_{2}}}
\end{align*}
(formal associativity).
\item
There exist $p_{3}(x_{0},x_{1}) \in
W[[x_{0},x_{1}]]$ and $a_{3},b_{3},c_{3} \geq 0$ such that
\begin{align*}
Y_{W}(v,-x_{0}+x_{1})Y_{W}(u,x_{1})w=\frac{p_{3}(x_{0},x_{1})}{x_{0}^
{a_{3}}x_{1}^{b_{3}}(-x_{0}+x_{1})^{c_{3}}}
\end{align*}
and
\begin{align*}
Y_{W}(Y(u,x_{0})v,x_{1}-x_{0})w=\frac{p_{3}(x_{0},x_{1})}{x_{0}^
{a_{3}}x_{1}^{b_{3}}(x_{1}-x_{0})^{c_{3}}}
\end{align*}
(formal skew-associativity).
\end{itemize}
\end{prop}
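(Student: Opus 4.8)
The plan is to observe that the module Jacobi identity is exactly an instance of statement (A) in Proposition \ref{prop:elem}, so that the six itemized conclusions are nothing but the translations of statements (B) through (G) delivered by the implications proved there. This mirrors precisely the (immediate) proof of Proposition \ref{prop:vvalgrel} for the algebra case; the only point requiring fresh attention is that the relevant series now take values in $W$ rather than in $V$, so I must check that they still meet the hypotheses of Proposition \ref{prop:elem}.

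Concretely, I would fix $u,v \in V$ and $w \in W$ and make the identifications $f(x_{1},x_{2})=Y_{W}(u,x_{1})Y_{W}(v,x_{2})w$, $g(x_{2},x_{1})=Y_{W}(v,x_{2})Y_{W}(u,x_{1})w$, and $h(x_{2},x_{0})=Y_{W}(Y(u,x_{0})v,x_{2})w$. With these substitutions the module Jacobi identity is literally (\ref{eq:A}) (with $W$ in place of $V$), so once the hypotheses are verified I may invoke Proposition \ref{prop:elem} verbatim.

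The main thing to verify --- and essentially the only place where anything could go wrong --- is that $f$, $g$, and $h$ lie in the appropriate iterated Laurent series spaces $W((x_{1}))((x_{2}))$. For $f$ this follows because $Y_{W}(v,x_{2})w \in W((x_{2}))$ and applying $Y_{W}(u,x_{1})$ coefficientwise preserves lower truncation in $x_{1}$; the argument for $g$ is identical after relabeling. The subtlest case is $h$, since it involves the iterate: here I would use that $Y(u,x_{0})v \in V((x_{0}))$ (part of the definition of a vacuum-free vertex algebra) to secure lower truncation in the iterate variable, and then that $Y_{W}(a,x)w \in W((x))$ for each $a \in V$ to secure lower truncation in the outer variable, so that $h$ indeed lies in the required space.

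Having checked this membership, the conclusions follow mechanically: the implications $(A)\Rightarrow(B)$, $(A)\Rightarrow(C)$, and $(A)\Rightarrow(D)$ yield module weak commutativity, weak associativity, and weak skew-associativity, while $(B)\Rightarrow(E)$, $(C)\Rightarrow(F)$, and $(D)\Rightarrow(G)$ yield the three formal properties, with the polynomials $p_{1}$, $p_{2}$, $p_{3}$ now having coefficients in $W$. Since the proof is a direct transcription of the formal-calculus content of Proposition \ref{prop:elem}, I expect no genuine obstacle beyond this routine truncation check, which is precisely why the surrounding text can remark that all the work has already been done.
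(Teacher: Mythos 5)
Your proposal is correct and is exactly the argument the paper intends: the module Jacobi identity is an instance of statement (A) of Proposition \ref{prop:elem}, and the six items are the translations of (B)--(G), just as in Proposition \ref{prop:vvalgrel}; the paper leaves this implicit (``we have already done all the work''). Your explicit verification that $f$, $g$, $h$ lie in the required iterated Laurent series spaces --- in particular the two-step truncation check for the iterate term $h$ --- is the only point needing attention, and you handle it correctly.
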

\begin{flushright} $\square$ \end{flushright}
\begin{remark} \rm
Concerning $m_{1},m_{2}$ and $m_{3}$, see Remark \ref{rem:ms}.
\end{remark}
\begin{prop}
\label{prop:twoformodJac}
Any two of weak commutativity, weak associativity and weak
skew-associativity (in the sense of Proposition
\ref{prop:vacfreemodstuff}) can replace the Jacobi identity in the
definition of the notion of module for a vacuum-free vertex algebra.
\end{prop}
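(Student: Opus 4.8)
The plan is to observe, exactly as in the vacuum-free algebra case (Proposition \ref{prop:two}), that this is purely a statement of formal calculus and follows from a single application of Proposition \ref{prop:elem}, now with the relevant series taking values in the module $W$ rather than in $V$. Concretely, I would set $f(x_1,x_2)=Y_W(u,x_1)Y_W(v,x_2)w$, $g(x_1,x_2)=Y_W(v,x_1)Y_W(u,x_2)w$, and $h(x_1,x_2)=Y_W(Y(u,x_2)v,x_1)w$, so that the evaluations $f(x_1,x_2)$, $g(x_2,x_1)$, $h(x_2,x_0)$ appearing in statement (A) of Proposition \ref{prop:elem} are precisely the three terms of the module Jacobi identity. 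Under these identifications, statements (B), (C) and (D) become exactly module weak commutativity, module weak associativity and module weak skew-associativity, while (E), (F), (G) become the corresponding formal properties listed in Proposition \ref{prop:vacfreemodstuff}.

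Before invoking Proposition \ref{prop:elem} I would check its hypothesis that $f,g,h \in W((x_1))((x_2))$, and this is where the only genuinely module-theoretic input enters. The truncation assumption $Y_W(u,x)w \in W((x))$ guarantees that $f$ and $g$ lie in $W((x_1))((x_2))$; for $h$ one additionally uses that $Y(u,x_0)v \in V((x_0))$ in the underlying vacuum-free vertex algebra, so that $h(x_1,x_2)=Y_W(Y(u,x_2)v,x_1)w$ is again a legitimate iterated truncated series. The conceptual point worth isolating is that although the iterate is formed using the \emph{algebra} map $Y$ rather than the module map $Y_W$, Proposition \ref{prop:elem} treats $f,g,h$ as abstract formal series and never uses what they represent; hence this mismatch is immaterial once the truncation conditions have been verified.

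With the hypotheses in place the equivalence follows mechanically. The forward direction, that the module Jacobi identity (statement (A)) yields all three weak properties, is already recorded in Proposition \ref{prop:vacfreemodstuff} and corresponds to the implications (ia), (ib), (ic). For the replacement itself I would verify that each of the three pairs recovers (A): weak commutativity with weak associativity via $(B)\Rightarrow(E)$, $(C)\Rightarrow(F)$ and then $(E)\text{ and }(F)\Rightarrow(A)$; weak commutativity with weak skew-associativity via $(B)\Rightarrow(E)$, $(D)\Rightarrow(G)$ and then $(E)\text{ and }(G)\Rightarrow(A)$; and weak associativity with weak skew-associativity via $(C)\Rightarrow(F)$, $(D)\Rightarrow(G)$ and then $(F)\text{ and }(G)\Rightarrow(A)$, invoking implications (iia)--(iiic) of Proposition \ref{prop:elem}.

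I do not expect any real obstacle. The substantive work was done in Proposition \ref{prop:elem}, and the step requiring the most care is merely the bookkeeping of the truncation conditions for $f$, $g$ and $h$ in the module setting, together with the observation that the argument is insensitive to the fact that the iterate is built from $Y$ while the remaining operators are built from $Y_W$. This is precisely why, as the preceding discussion notes, all the work for this section has effectively already been carried out.
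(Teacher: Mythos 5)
Your proposal is correct and follows exactly the route the paper intends: the paper gives no separate argument for Proposition \ref{prop:twoformodJac}, relying (as announced at the start of Section \ref{section:Module1}) on the fact that Proposition \ref{prop:elem} is a pure formal-calculus statement applicable verbatim with $f$, $g$, $h$ valued in $W$. Your explicit verification of the truncation hypotheses, including the point that the iterate $Y_W(Y(u,x_2)v,x_1)w$ lies in $W((x_1))((x_2))$ because $Y(u,x_0)v\in V((x_0))$, is exactly the bookkeeping the paper leaves implicit.
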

\begin{flushright} $\square$ \end{flushright}

\begin{prop}
\label{prop:modvacfree}
\bf{(vacuum-free skew-symmetry)} \it For all $u$ and $v \in V$, a
vacuum-free vertex algebra with module $W$, we get the following
relation:
\begin{align*}
Y_{W}(Y(u,x_{0})v,x_{2})=Y_{W}(Y(v, -x_{0})u,x_{2}+x_{0}).
\end{align*}
\end{prop}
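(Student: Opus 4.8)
The plan is to imitate the proof of Proposition \ref{prop:vfss} essentially verbatim, since the module Jacobi identity has exactly the same structural shape as the vacuum-free vertex algebra Jacobi identity, with the two outer operators replaced by module operators $Y_W$ and the single inner operator still the algebra operator $Y$. The one observation to exploit is that the left-hand side of the module Jacobi identity is invariant under the substitution $(x_0,x_1,x_2;u,v)\leftrightarrow(-x_0,x_2,x_1;v,u)$: under this substitution the two summands on the left simply trade places. Granting this, the right-hand side must share the same invariance, which forces
\[
x_1^{-1}\delta\left(\frac{x_2+x_0}{x_1}\right)Y_W(Y(u,x_0)v,x_2)w
= x_2^{-1}\delta\left(\frac{x_1-x_0}{x_2}\right)Y_W(Y(v,-x_0)u,x_1)w .
\]

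To finish, I would convert the right side using the two-term delta identity (\ref{twotermdelta}) together with the delta-function substitution principle (Proposition \ref{prop:deltasub}), whose existence hypotheses are met by the truncation axioms $Y_W(u,x)w\in W((x))$ and $Y(u,x_0)v\in V((x_0))$. Multiplying by $x_1^{-1}\delta\left(\frac{x_2+x_0}{x_1}\right)$ licenses the replacement $x_1\mapsto x_2+x_0$ inside the argument of $Y_W(Y(v,-x_0)u,x_1)w$, giving
\[
x_1^{-1}\delta\left(\frac{x_2+x_0}{x_1}\right)Y_W(Y(u,x_0)v,x_2)w
= x_1^{-1}\delta\left(\frac{x_2+x_0}{x_1}\right)Y_W(Y(v,-x_0)u,x_2+x_0)w .
\]
Applying $\text{Res}_{x_1}$ to both sides, and using that $\text{Res}_{x_1}\,x_1^{-1}\delta\left(\frac{x_2+x_0}{x_1}\right)=1$ while the surviving operators no longer depend on $x_1$, isolates $Y_W(Y(u,x_0)v,x_2)w=Y_W(Y(v,-x_0)u,x_2+x_0)w$. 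Since $w\in W$ is arbitrary, the asserted operator identity follows.

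The sole point requiring care is verifying the claimed invariance of the left-hand side, and this is the only place where one could slip. It is not deep, but it demands being scrupulous about the binomial-expansion convention (\ref{binexpconv}): the argument $\frac{x_1-x_2}{x_0}$ is expanded in nonnegative powers of $x_2$ whereas $\frac{-x_2+x_1}{x_0}$ is expanded in nonnegative powers of $x_1$, and one must check that after the substitution the factor $(-x_0)^{-n-1}=(-1)^{n+1}x_0^{-n-1}$ combines with $(x_2-x_1)^n$ to reproduce exactly $-x_0^{-1}\delta\left(\frac{-x_2+x_1}{x_0}\right)$ multiplying the reordered product (and symmetrically for the other term). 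Because this formal-calculus bookkeeping is identical to that already carried out for the algebra case, and because replacing $Y$ by $Y_W$ in the outer slots changes nothing in the argument, no genuinely new work is required; indeed, as remarked at the start of Section \ref{section:Module1}, all the work has already been done.
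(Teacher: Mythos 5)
Your proposal is correct and takes essentially the same route as the paper, whose entire proof of this proposition is the remark that it is ``essentially the same as for Proposition \ref{prop:vfss}'': namely, the invariance of the left-hand side of the (module) Jacobi identity under $(x_{0},x_{1},x_{2};u,v)\leftrightarrow(-x_{0},x_{2},x_{1};v,u)$, followed by the two-term delta identity with the delta-function substitution and an application of $\text{Res}_{x_{1}}$. Your additional care with the sign bookkeeping and the existence hypotheses is exactly the verification the paper leaves implicit.
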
  
\begin{proof}
The proof is essentially the same as for Proposition \ref{prop:vfss}.
\end{proof}
\begin{prop}
\label{prop:WAplusVFSSforJac}
In the definition of the notion of module for a vacuum-free vertex
algebra, the Jacobi identity can be replaced by weak associativity (in
the sense of Proposition \ref{prop:vacfreemodstuff}) together with
vacuum-free skew-symmetry (in the sense of Proposition
\ref{prop:modvacfree}).
\end{prop}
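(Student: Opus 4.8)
The plan is to imitate the proof of Proposition \ref{prop:weakassocplusskew} in the module setting, reducing everything to Proposition \ref{prop:twoformodJac}. Since that proposition already tells us that module weak associativity together with module weak skew-associativity can replace the module Jacobi identity, it suffices to derive module weak skew-associativity from the two hypotheses at hand, namely module weak associativity (Proposition \ref{prop:vacfreemodstuff}) and module vacuum-free skew-symmetry (Proposition \ref{prop:modvacfree}). Once module weak skew-associativity is in hand, Proposition \ref{prop:twoformodJac} finishes the argument at once.

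Following the commutative associative guide $a(bc)=(ab)c=(ba)c$, I would establish, for all $u,v \in V$ and $w \in W$, the existence of some $l \geq 0$ such that
\begin{align*}
(x_{1}-x_{0})^{l}Y_{W}(u,-x_{0}+x_{1})Y_{W}(v,x_{1})w
&=(x_{1}-x_{0})^{l}Y_{W}(Y(u,-x_{0})v,x_{1})w\\
&=(x_{1}-x_{0})^{l}Y_{W}(Y(v,x_{0})u,x_{1}-x_{0})w.
\end{align*}
The first equality is module weak associativity after the substitution $x_{0}\mapsto -x_{0}$, $x_{2}\mapsto x_{1}$, taking $l$ to be the corresponding exponent $m_{2}$; note that $(-x_{0}+x_{1})^{m_{2}}=(x_{1}-x_{0})^{m_{2}}$ since $m_{2}\geq 0$. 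The second equality is module vacuum-free skew-symmetry under the same substitution $x_{0}\mapsto -x_{0}$, $x_{2}\mapsto x_{1}$, applied to $w$; since that relation is an exact operator identity it survives multiplication by $(x_{1}-x_{0})^{l}$ for free. Interchanging the roles of $u$ and $v$ in the resulting identity then produces precisely the statement of module weak skew-associativity as recorded in Proposition \ref{prop:vacfreemodstuff}.

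I expect no genuine obstacle here: the essential point is structural rather than computational, namely that the vector $w$ stays in the rightmost position throughout the chain, so that each manipulation remains inside a single module action $Y_{W}(\,\cdot\,,x)$ with an algebra element in its inner slot. This is exactly the feature that fails when one tries to run the analogue with weak commutativity, as noted at the start of this section, which is why that route is unavailable for modules. The only bookkeeping required will be to confirm that the two substitutions respect the relevant existence and summability conditions and that a single $l$ clears all poles simultaneously, which follows from the pole-clearing observations of Remark \ref{rem:ms}; the remainder is a formal appeal to Proposition \ref{prop:twoformodJac}.
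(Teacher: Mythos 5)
Your proposal is correct and follows essentially the same route as the paper: the paper's proof simply states that the argument is the same as that of Proposition \ref{prop:weakassocplusskew}, namely deriving module weak skew-associativity via the chain $(x_{1}-x_{0})^{l}Y_{W}(u,-x_{0}+x_{1})Y_{W}(v,x_{1})w=(x_{1}-x_{0})^{l}Y_{W}(Y(u,-x_{0})v,x_{1})w=(x_{1}-x_{0})^{l}Y_{W}(Y(v,x_{0})u,x_{1}-x_{0})w$ and then invoking Proposition \ref{prop:twoformodJac}. Your writeup just makes the substitutions and the final swap of $u$ and $v$ explicit, which is exactly what the paper leaves implicit.
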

\begin{proof}
The proof is essentially the same as for Proposition
\ref{prop:weakassocplusskew}.
\end{proof}
\begin{prop}
\label{prop:WSAplusVFSSforJac}
In the definition of the notion of module for a vacuum-free vertex
algebra, the Jacobi identity can be replaced by weak
skew-associativity (in the sense of Proposition
\ref{prop:vacfreemodstuff}) together with vacuum-free skew-symmetry
(in the sense of Proposition \ref{prop:modvacfree}).
\end{prop}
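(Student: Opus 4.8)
The plan is to mirror, \emph{mutatis mutandis}, the vacuum-free algebra argument of Proposition \ref{prop:weakskewassocplusskew}, carrying the module operators $Y_{W}$ through in place of $Y$ at every slot where an element of $W$ is acted upon. Concretely, I would assume module weak skew-associativity and module vacuum-free skew-symmetry (in the senses of Propositions \ref{prop:vacfreemodstuff} and \ref{prop:modvacfree}) and deduce module weak associativity. Once both of these weak properties are in hand, Proposition \ref{prop:twoformodJac} immediately recovers the module Jacobi identity, so no further work is needed.

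The computational core follows the classical guide $a(bc)=(ba)c=(ab)c$, which is exactly the mirror image of the guide used for weak associativity. I would fix $u,v \in V$ and $w \in W$ and exhibit an $l \geq 0$ for which
\begin{align*}
(x_{0}+x_{2})^{l}Y_{W}(v,x_{0}+x_{2})Y_{W}(u,x_{2})w
&=(x_{0}+x_{2})^{l}Y_{W}(Y(u,-x_{0})v,x_{2}+x_{0})w\\
&=(x_{0}+x_{2})^{l}Y_{W}(Y(v,x_{0})u,x_{2})w.
\end{align*}
The first equality is module weak skew-associativity after the substitution $x_{0} \mapsto -x_{0}$, $x_{1} \mapsto x_{2}$, under which the pole-clearing factor $(x_{1}-x_{0})^{m_{3}}$ becomes $(x_{2}+x_{0})^{m_{3}}=(x_{0}+x_{2})^{m_{3}}$ and the iterate $Y(u,x_{0})v$ becomes $Y(u,-x_{0})v$. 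The second equality is module vacuum-free skew-symmetry with the roles of $u$ and $v$ interchanged, read right to left; since that is an exact identity of operators, multiplying through by the common factor $(x_{0}+x_{2})^{l}$ and evaluating at $w$ preserves it. The net identity is precisely module weak associativity with $u$ and $v$ relabeled, and as $u,v$ were arbitrary this gives weak associativity for all pairs.

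The point deserving care — and really the only place where the module setting behaves differently from the algebra setting — is that this route succeeds precisely because the chosen guide keeps ``$c$'' (the module vector $w$) in the rightmost position at every stage: each displayed expression is a genuine module operator acting on $w$, while the inner slots $Y(u,-x_{0})v$ and $Y(v,x_{0})u$ remain purely algebraic. This is exactly the structural feature that fails for the weak-commutativity guide and succeeds here, as flagged in the introduction to this section. Beyond this conceptual check, the remaining bookkeeping is to confirm that a single exponent $l$ clears all the relevant poles at once, so that the displayed chain is literally an equality of identically-multiplied series; this is guaranteed by the uniform choice of exponents recorded in Remark \ref{rem:ms}. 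I therefore do not expect a genuine obstacle beyond verifying the rightmost-$w$ structure and this pole-clearing bookkeeping, since both hypotheses are assumed outright and the skew-symmetry step is an exact operator identity.
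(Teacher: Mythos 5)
Your proposal is correct and follows essentially the same route as the paper: the paper's proof simply says it is ``essentially the same as for Proposition \ref{prop:weakskewassocplusskew},'' whose argument is exactly your chain $a(bc)=(ba)c=(ab)c$ deducing module weak associativity and then invoking Proposition \ref{prop:twoformodJac}. Your added remarks on the rightmost position of $w$ and the pole-clearing exponent are consistent with the paper's own commentary and introduce nothing divergent.
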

\begin{proof}
The proof is essentially the same as for Proposition
\ref{prop:weakskewassocplusskew}.
\end{proof}

\section{Vertex algebras with vacuum}
\setcounter{equation}{0}

We now consider the case of a vacuum-free vertex algebra in which one
of the vertex operators acts as the identity.  That is, given a vertex
algebra $V$ we have a distinguished vector $\textbf{1} \in V$, which
we call the vacuum vector, with the property that $Y(\textbf{1},x)=1$,
where $1$ is the identity endomorphism of $V$.  Continuing the analogy
with commutative associative algebras from previous sections, we see
that such a vector is analogous to a left identity map.  We may then
wonder if there is some sort of right identity property.  Since
vacuum-free skew-symmetry switches the order of the two vectors, we
consider specializing to the case where one of the two vectors is the
vacuum vector.  We get
\begin{align}
\label{strongcreation}
Y(Y(u,x)\textbf{1},z)&=Y(Y(\textbf{1},-x)u,z+x) \nonumber\\
&=Y(u,z+x),
\end{align}  
{}from which it is easy to see that $Y(u,x)\textbf{1}$ must be a formal
power series in $x$.  Therefore we may set $x=0$, to check the
constant term, which gives us
\begin{align*}
Y(u_{-1}\textbf{1},z)=Y(u,z),
\end{align*}
which by injectivity gives
\begin{align*}
u_{-1}\textbf{1}=u.
\end{align*}
With this as motivation, we recall (\cite{B}; cf. \cite{LL}) the
definition of vertex algebra (with vacuum) and although the definition
contains redundancies we state both the left and right identity
properties for purposes of clarity and also since this is traditional.
\begin{defi} \rm
A \it{vertex algebra} \rm is a vacuum-free vertex algebra $V$ together
with a distinguished element $\textbf{1}$ satisfying the following
\it{vacuum property} \rm:
\begin{align*}
Y(\textbf{1},x)=1
\end{align*}
and \it{creation property} \rm:
\begin{align*}
Y(u,x)\textbf{1} \in V[[x]] \text{ and}\\
Y(u,0)\textbf{1}=u  \text{ for all }u \in V.
\end{align*}
\end{defi}
\begin{remark} \rm
\label{rem:minordef}
It shall be convenient for us to sometimes refer to ``minor axioms''
or separately ``minor properties'' of vertex algebras and their
modules.  For any object whose definition contains a
(vertex-algebraic) Jacobi identity, the ``minor axioms'' of that
object are all of the {\it explicitly} stated axioms except for the
Jacobi identity.  In the remainder of this paper, by ``minor
property'' of a vertex algebra we will mean any property from the
following list:
\begin{itemize}
\item Vacuum free skew-symmetry
\item Skew-symmetry
\item $\mathcal{D}$-bracket derivative property
\item $\mathcal{D}$-derivative property
\item Strong creation property,
\end{itemize}
some of which we have yet to recall, but soon shall.  
\end{remark}
Consider again the
following:
\begin{align*}
Y(Y(u,x)\textbf{1},z)&=Y(Y(\textbf{1},-x)u,z+x)\\
&=Y(u,z+x)\\
&=e^{x\frac{d}{dz}}Y(u,z).
\end{align*} 
Checking the first degree term in $x$ gives
\begin{align*}
Y(u_{-2}\textbf{1},z)=\frac{d}{dz}Y(u,z).
\end{align*} 
Thus vertex operators are closed under differentiation.  Furthermore,
we define the $\mathcal{D}$ operator.
\begin{defi} \rm
Given a vertex algebra, $V$, define $\mathcal{D} \in \text{\rm End}(V)$ by
\begin{align*}
\mathcal{D}v=v_{-2}\textbf{1}.
\end{align*}
\end{defi}
We may now write the \it{$\mathcal{D}$-derivative property} \rm:
\begin{align}
Y(\mathcal{D}u,z)=\frac{d}{dz}Y(u,z).
\end{align}
\begin{remark} \rm
\label{rem:minorfirst}
We note that the philosophy behind this proof of the
$\mathcal{D}$-derivative property is again a classical analogue coming
{}from commutative associative algebras with identity, namely, we used
as a guide the relation $a \cdot 1=1 \cdot a$.  In Proposition 3.1.18
\cite{LL}, the authors obtain the $\mathcal{D}$-derivative property in
a different fashion.  Their point of view was to observe that $v_{-2}
\textbf{1}$ is the component of a certain ``iterate'' and then to look
at the ``iterate formula,'' equation (3.1.11) \cite{LL} and ``slice
down'' to get the correct component.  We never need to make use of the
iterate formula in this work and, in fact, never use the word
``iterate'' except informally or to say that we won't use it.  As we
shall see (Propositions \ref{prop:leftrightid}, \ref{prop:minor1} and
\ref{prop:skewSymmetries} and Remarks \ref{rem:min} and
\ref{rem:minor}), the basic theory of the minor properties of a vertex
algebra, as well as some of the basic theory of those minor axioms
concerning the vacuum vector, can be handled entirely with vacuum-free
skew-symmetry without reference to the Jacobi identity or the iterate
formula.  However, we also note that the connection between the
$\mathcal{D}$-derivative property and properties similar to the
``iterate'' formula, namely, weak associativity and weak
skew-associativity, play an essential role in this treatment, as we
see in Propositions \ref{prop:WADDER}, \ref{prop:WSAgivesDDER} and
\ref{prop:modDder}, which are used to obtain Theorem
\ref{theorem:main}.
\end{remark}

In the introduction to Section \ref{sec:vvalgebra} we mentioned the
equivalence of the vacuum and creation properties provided we
separately state injectivity as an axiom.  In the introduction to this
section we saw how in the presence of vacuum-free skew-symmetry and
the other minor axioms, the vacuum property implies the creation
property.  We now show the converse.
\begin{prop}
\label{prop:leftrightid}
In the presence of vacuum-free skew-symmetry and the other minor
axioms of a vertex algebra, the vacuum property and the creation
property each imply the other.
\end{prop}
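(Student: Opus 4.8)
The plan is to prove both implications separately, in each case using vacuum-free skew-symmetry as the main tool to convert between the left-identity (vacuum) property and the right-identity (creation) property. Since the introduction to this section already established, via the specialization of vacuum-free skew-symmetry in (\ref{strongcreation}), that the vacuum property $Y(\textbf{1},x)=1$ together with injectivity yields the creation property, the substantive remaining work is the converse: deriving $Y(\textbf{1},x)=1$ from the creation property.

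For the direction ``creation $\Rightarrow$ vacuum,'' I would begin by setting $u=\textbf{1}$ in vacuum-free skew-symmetry (Proposition \ref{prop:vfss}), which gives
\begin{align*}
Y(Y(\textbf{1},x_{0})v,x_{2})=Y(Y(v,-x_{0})\textbf{1},x_{2}+x_{0}).
\end{align*}
The creation property tells us that $Y(v,-x_{0})\textbf{1} \in V[[x_{0}]]$ with constant term $v$, so the right-hand side can be expanded and analyzed in powers of $x_{0}$. The idea is to extract the coefficient of $x_{0}^{0}$, which on the right gives $Y(v,x_{2})$ by the creation property, and on the left gives $Y(Y(\textbf{1},x_{0})v,x_{2})$ evaluated at the $x_0^0$-level. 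First I would argue that the creation property forces $Y(\textbf{1},x_0)v$ to involve only nonnegative powers of $x_0$ in the relevant sense, then match constant terms to obtain $Y((\textbf{1})_{-1}v,x_2)=Y(v,x_2)$, and finally invoke injectivity of $Y(\cdot,x)$ to conclude $(\textbf{1})_{-1}v=v$, i.e., that $(\textbf{1})_{-1}$ acts as the identity. To upgrade this to the full statement $Y(\textbf{1},x)=1$ (that \emph{all} the higher components $(\textbf{1})_n$ vanish appropriately), I would examine the higher-order coefficients in $x_0$ of the same identity and show inductively that they force $(\textbf{1})_n v = 0$ for $n \neq -1$, again closing each step by injectivity.

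The reverse direction, ``vacuum $\Rightarrow$ creation,'' has essentially already been carried out in the prose preceding Definition of the vertex algebra and in (\ref{strongcreation}): specializing vacuum-free skew-symmetry with the second argument equal to $\textbf{1}$ and using $Y(\textbf{1},x)=1$ yields $Y(Y(u,x)\textbf{1},z)=Y(u,z+x)$, whence $Y(u,x)\textbf{1} \in V[[x]]$ and setting $x=0$ combined with injectivity gives $u_{-1}\textbf{1}=u$, which is exactly the creation property. So for this direction I would simply reproduce that argument cleanly and cite (\ref{strongcreation}).

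The main obstacle I anticipate is the creation $\Rightarrow$ vacuum direction, specifically the bookkeeping needed to pass from the single identity $(\textbf{1})_{-1}v=v$ to the full operator statement $Y(\textbf{1},x)=1$, controlling all components $(\textbf{1})_n$ simultaneously. The delicate point is ensuring that the coefficient extraction in $x_0$ is legitimate---that the relevant series genuinely lie in $V[[x_0]]$ rather than $V((x_0))$ so that ``setting $x_0=0$'' and reading off higher coefficients makes sense---which is precisely where the creation property's assertion $Y(v,-x_0)\textbf{1}\in V[[x_0]]$ does the essential work. Throughout, injectivity of the vertex operator map is the linchpin that converts each operator equation into the desired vector equation, consistent with the remark in the section introduction that injectivity restores the symmetry between the left and right identity properties.
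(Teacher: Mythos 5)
Your overall strategy matches the paper's: both directions via specializing vacuum-free skew-symmetry at the vacuum vector, and your treatment of the easy direction and of the constant-term identity $\textbf{1}_{-1}v=v$ is exactly what the paper does. However, the step where you propose to ``examine the higher-order coefficients in $x_0$ \dots and show inductively that they force $(\textbf{1})_n v = 0$ for $n \neq -1$'' conceals a genuine gap, and as described it would fail. The coefficient of $x_0^{k}$ (for $k\geq 1$) in
\begin{align*}
Y(Y(\textbf{1},x_0)v,x_2)=e^{x_0\frac{d}{dx_2}}Y(Y(v,-x_0)\textbf{1},x_2)
\end{align*}
does not read $Y(\textbf{1}_{-k-1}v,x_2)=0$: the right-hand side contributes terms involving $v_{-j-1}\textbf{1}$ for $1\le j\le k$ (what will eventually be powers of $\mathcal{D}$ applied to $v$), and these are not zero. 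Already at $k=1$ one gets
\begin{align*}
Y(\textbf{1}_{-2}v,x_2)=-Y(v_{-2}\textbf{1},x_2)+\frac{d}{dx_2}Y(v,x_2),
\end{align*}
a single equation in the two unknown vectors $\textbf{1}_{-2}v$ and $v_{-2}\textbf{1}$; you cannot conclude that the right side vanishes without the $\mathcal{D}$-derivative property, which is unavailable here because it was derived from the vacuum property --- precisely what you are trying to prove. The paper flags this exact temptation explicitly. The difficulty you single out (legitimacy of coefficient extraction in $x_0$) is real but routine; the actual obstacle is the uncontrolled $v_{-2}\textbf{1}$ term.

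The paper's resolution, absent from your plan, is a further specialization: set $v=\textbf{1}$ in the $k=1$ identity so the two unknowns coincide, giving $2Y(\textbf{1}_{-2}\textbf{1},x_2)=\frac{d}{dx_2}Y(\textbf{1},x_2)$; then apply both sides to $\textbf{1}$ and extract the constant term in $x_2$ (legitimate by the creation property) to obtain $2\,\textbf{1}_{-2}\textbf{1}=\textbf{1}_{-2}\textbf{1}$, hence $\textbf{1}_{-2}\textbf{1}=0$ and therefore $\frac{d}{dx_2}Y(\textbf{1},x_2)=0$. That one conclusion kills all components $\textbf{1}_{n}$ with $n\le -2$ for every $v$ simultaneously, with no induction. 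Your proposal needs this $v=\textbf{1}$ self-referential trick, or an equivalent idea, to close.
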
 
\begin{proof}
We have already seen how the vacuum property implies the creation
property.  We begin the converse statement in a similar fashion by
specializing one of the vectors in the formula for vacuum-free
skew-symmetry to be $\textbf{1}$.  We get:
\begin{align}
\label{eq:creattovac}
Y(Y(\textbf{1},x)v,z)=Y(Y(v,-x)\textbf{1},z+x)\\
=e^{x\frac{d}{dz}}Y(Y(v,-x)\textbf{1},z).\nonumber
\end{align}
which by the first part of the creation property gives us that
$Y(\textbf{1},x)v$ is a power series in $x$.  Then extracting the
constant term in $x$ we have
\begin{align*}
Y(\textbf{1}_{-1}v,z)=Y(v_{-1}\textbf{1},z),
\end{align*}
which by the second part of the creation property and by injectivity
(which follows also from the second part of the creation property as
is usually argued) we have
\begin{align*}
\textbf{1}_{-1}v=v.
\end{align*}

We now need to show that $\textbf{1}_{-n}v=0$ for $n \geq 2$, or in
other words that $\frac{d}{dz}Y(\textbf{1},z)=0$.  It is tempting to
try and use the $\mathcal{D}$-derivative property to``peel off" the
``outer" $Y$ operator in (\ref{eq:creattovac}) but remember that we
used the vacuum property to get the $\mathcal{D}$-derivative property
so this is not available to us.  Instead, we try to imitate the
process of getting the $\mathcal{D}$-derivative property by checking
the linear term in $x$.  This gives us
\begin{align*}
Y(\textbf{1}_{-2}v,z)=-Y(v_{-2}\textbf{1},z)+\frac{d}{dz}Y(v,z).
\end{align*}
Then further specializing by setting $v=\textbf{1}$, we have
\begin{align}
\label{creationvacuum}
2Y(\textbf{1}_{-2}\textbf{1},z)=\frac{d}{dz}Y(\textbf{1},z).
\end{align}
Acting against $\textbf{1}$ and extracting the constant term in $z$
gives us, by the second part of the creation property, that
\begin{align*}
2\textbf{1}_{-2}\textbf{1}=\textbf{1}_{-2}\textbf{1},
\end{align*}
which gives us that $\textbf{1}_{-2}\textbf{1}=0$.  Then substituting back into
(\ref{creationvacuum}) gives
\begin{align*}
\frac{d}{dz}Y(\textbf{1},z)=0,
\end{align*}
which is what we needed to show.
\end{proof}
\begin{remark} \rm
\label{rem:min}
As was mentioned in the introduction to Section \ref{sec:vvalgebra},
Proposition 3.6.7 \cite{LL} shows that in the presence of the other
axioms the vacuum property may be derived and Remark 2.2.4 in
\cite{FHL} shows that in the presence of the other axioms (including
injectivity) the creation property may be derived.  Our proof in
Proposition \ref{prop:leftrightid} shares some common features with
the proofs in \cite{FHL} and \cite{LL}, but one difference that is
perhaps worth pointing out is that our assumption in each case was
weaker.  We needed only to assume vacuum-free skew-symmetry whereas
the proofs in both \cite{FHL} and \cite{LL} explicitly used the Jacobi
identity.  As was discussed in Remark \ref{rem:minorfirst} much of the
theory of the minor properties of a vertex algebra, as well as those
minor axioms concerning the vacuum vector, can be handled with only
the use of vacuum-free skew-symmetry (see Propositions
\ref{prop:minor1} and \ref{prop:skewSymmetries} and Remark
\ref{rem:minor} below).
\end{remark}

We now derive some of the standard ``minor properties.''  Taking the
exponential generating function of the higher derivatives and using
the $\mathcal{D}$-derivative property (and the formal Taylor theorem)
gives
\begin{align}
\label{eq:expDder}
Y(e^{x\mathcal{D}}u,z)=e^{x\frac{d}{dz}}Y(u,z)=Y(u,z+x),
\end{align}
which by (\ref{strongcreation}) gives
\begin{align*}
Y(e^{x\mathcal{D}}u,z)&=Y(u,z+x)\\
&=Y(Y(u,x)\textbf{1},z),
\end{align*}
which, by the injectivity of vertex operators, gives the \it{strong
creation property} \rm:
\begin{align}
Y(u,x)\textbf{1}=e^{x\mathcal{D}}u.
\end{align}

We again consider vacuum-free skew-symmetry in light of the
$\mathcal{D}$ operator, where we now have:
\begin{align*}
Y(Y(u,x)v,z)&=Y(Y(v,-x)u,z+x)\\
&=Y(e^{x\mathcal{D}}Y(v,-x)u,z),
\end{align*}  
which, by the injectivity of vertex operators, gives us \it{skew-symmetry}\rm:
\begin{align}
Y(u,x)v=e^{x\mathcal{D}}Y(v,-x)u.
\end{align}
We may take the derivative of this skew-symmetry formula to get:
\begin{align}
\label{skewD}
\frac{d}{dx}Y(u,x)v&=\mathcal{D}e^{x\mathcal{D}}Y(v,-x)u
+e^{x\mathcal{D}}\frac{d}{dx}Y(v,-x)u \nonumber\\
&=\mathcal{D}Y(u,x)v+e^{x\mathcal{D}}\frac{d}{dx}Y(v,-x)u \nonumber\\
&=\mathcal{D}Y(u,x)v-e^{x\mathcal{D}}Y(\mathcal{D}v,-x)u \nonumber \\
&=\mathcal{D}Y(u,x)v-Y(u,x)\mathcal{D}v,
\end{align}
where we used skew-symmetry to get the first, second and fourth
equalities and the $\mathcal{D}$-derivative property to get the third equality.
Observe that the last expression is a commutator, which in fact gives
us the \it{$\mathcal{D}$-bracket derivative property}\rm:
\begin{align}
[\mathcal{D},Y(u,x)]=\frac{d}{dx}Y(u,x).
\end{align}
Rearranging the terms of the $\mathcal{D}$-bracket formula makes it resemble a
product rule:
\begin{align}
\label{D-product1}
\mathcal{D}Y(u,x)v=\frac{d}{dx}Y(u,x)v+Y(u,x)\mathcal{D}v.
\end{align}
Of course, because of the $\mathcal{D}$-derivative property, we also have
\begin{align*}
[\mathcal{D},Y(u,x)]=Y(\mathcal{D}u,x).
\end{align*}
which, when the terms are rearranged, becomes
\begin{align}
\label{D-product2}
\mathcal{D}Y(u,x)v=Y(\mathcal{D}u,x)v+Y(u,x)\mathcal{D}v.
\end{align}
\begin{remark} \rm
\label{rem:umbral}
Whereas the $\mathcal{D}$-derivative property may be thought of as an
analogue of the power rule for differentiation, the
$\mathcal{D}$-bracket derivative property may be thought of as an
analogue of the product rule.  Indeed, while it is often messy to
check what properties look like component-wise, in the case of these
two properties the formulas are very familiar.  For the
$\mathcal{D}$-derivative property we have
\begin{align*}
(\mathcal{D}u)_{n}=-n(\mathcal{D}u)_{n-1}.
\end{align*}
For the $\mathcal{D}$-bracket derivative property (as rearranged in the form
given in (\ref{D-product2})) we have
\begin{align*}
\mathcal{D}(u_{n}v)=(\mathcal{D}u)_{n}v+u_{n}\mathcal{D}v.
\end{align*}
\end{remark}

Obviously, by the same reasoning as that which we gave for the automorphism
property, we have that (\ref{D-product2}) gives
\begin{align}
\label{D-Bracketexp1}
e^{z\mathcal{D}}Y(u,x)v=Y(e^{z\mathcal{D}}u,x)e^{z\mathcal{D}}v,
\end{align}
which by (\ref{eq:expDder}) gives us
\begin{align}
\label{D-Bracketexp2}
e^{z\mathcal{D}}Y(u,x)v=Y(u,x+z)e^{z\mathcal{D}}v,
\end{align}
which formula also follows directly from (\ref{D-product1}) and the
formal Taylor theorem, again using the same reasoning as the proof of
the automorphism property.

We have seen that skew-symmetry, together with the $\mathcal{D}$-derivative
property, gives us the $\mathcal{D}$-bracket derivative property.  Conversely,
we may also derive the $\mathcal{D}$-derivative property assuming only
skew-symmetry and the $\mathcal{D}$-bracket derivative property.  A careful
consideration of (\ref{skewD}), where our assumption is now that the
last line is equal to the first line, gives us:
\begin{align*}
Y(u,x)\mathcal{D}v&=-e^{x\mathcal{D}}\frac{d}{dx}Y(v,-x)u \Leftrightarrow\\
e^{-x\mathcal{D}}Y(u,x)\mathcal{D}v&=-\frac{d}{dx}Y(v,-x)u \Leftrightarrow\\
Y(\mathcal{D}v,-x)u&=-\frac{d}{dx}Y(v,-x)u,
\end{align*}
which is the $\mathcal{D}$-derivative property stated for $-x$.  
Given the $\mathcal{D}$-derivative property and skew-symmetry, we may also
recover vacuum-free skew-symmetry as can be seen by the following
calculation:
\begin{align*}
Y(u,x)v&=e^{x\mathcal{D}}Y(v,-x)u \Leftrightarrow\\
Y(Y(u,x)v,z)&=Y(e^{x\mathcal{D}}Y(v,-x)u,z) \Leftrightarrow\\
Y(Y(u,x)v,z)&=Y(Y(v,-x)u,z+x). 
\end{align*}
We summarize some of our implications in the next two propositions.
\begin{prop}
\label{prop:minor1}
In the presence of only the minor axioms of a vertex algebra, but
excluding the creation property, vacuum-free skew-symmetry implies the
strong creation property, skew-symmetry, the $\mathcal{D}$-derivative
property, and the $\mathcal{D}$-bracket derivative property.
\end{prop}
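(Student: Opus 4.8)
The plan is to observe that each of the four conclusions has in fact already been produced in the running discussion preceding the statement, so that the proof amounts to organizing that chain of derivations and checking that it never secretly invokes the creation property. The single genuine input is vacuum-free skew-symmetry (Proposition \ref{prop:vfss}), used together with the vacuum property $Y(\textbf{1},x)=1$, the injectivity of $Y(\,\cdot\,,x)$, and the formal Taylor theorem; the operator $\mathcal{D}$ is available throughout since it is defined purely from the vacuum vector as $\mathcal{D}v=v_{-2}\textbf{1}$, independently of the creation property.

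First I would establish the $\mathcal{D}$-derivative property. Specializing vacuum-free skew-symmetry to $v=\textbf{1}$ and applying the vacuum property gives $Y(Y(u,x)\textbf{1},z)=Y(u,z+x)=e^{x\frac{d}{dz}}Y(u,z)$, as in (\ref{strongcreation}). Since $Y(\,\cdot\,,z)$ is linear, the left-hand side equals $\sum_n Y(u_n\textbf{1},z)x^{-n-1}$, and extracting the coefficient of $x^1$ on each side yields $Y(u_{-2}\textbf{1},z)=\frac{d}{dz}Y(u,z)$, that is, $Y(\mathcal{D}u,z)=\frac{d}{dz}Y(u,z)$. Iterating this gives $Y(\mathcal{D}^k u,z)=\frac{d^k}{dz^k}Y(u,z)$, whence $Y(e^{x\mathcal{D}}u,z)=e^{x\frac{d}{dz}}Y(u,z)=Y(u,z+x)$ as in (\ref{eq:expDder}); comparing this with (\ref{strongcreation}) and invoking injectivity produces the strong creation property $Y(u,x)\textbf{1}=e^{x\mathcal{D}}u$.

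Next I would derive skew-symmetry. Rewriting $Y(w,z+x)=e^{x\frac{d}{dz}}Y(w,z)=Y(e^{x\mathcal{D}}w,z)$ by (\ref{eq:expDder}) and substituting into vacuum-free skew-symmetry gives $Y(Y(u,x)v,z)=Y(e^{x\mathcal{D}}Y(v,-x)u,z)$, so injectivity yields $Y(u,x)v=e^{x\mathcal{D}}Y(v,-x)u$. Finally, differentiating this formula in $x$ and simplifying exactly as in the computation (\ref{skewD}) — using $\frac{d}{dx}e^{x\mathcal{D}}=\mathcal{D}e^{x\mathcal{D}}$, re-applying skew-symmetry, and using the $\mathcal{D}$-derivative property to handle $\frac{d}{dx}Y(v,-x)$ — collapses the right-hand side to $\mathcal{D}Y(u,x)v-Y(u,x)\mathcal{D}v$, which is the $\mathcal{D}$-bracket derivative property.

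The step I expect to require the most care is the first one, establishing the $\mathcal{D}$-derivative property, precisely because the hypothesis excludes the creation property. One must verify that the coefficient extraction is legitimate using only the vacuum property: the right-hand side $e^{x\frac{d}{dz}}Y(u,z)$ is manifestly a power series in $x$, so the identity itself forces $Y(u,x)\textbf{1}$ to contain only nonnegative powers of $x$ — a creation-type conclusion that here is derived rather than assumed. Keeping track of this, so that no later step circularly reintroduces the creation property, is the main bookkeeping obstacle; the remaining manipulations are the formal-calculus identities already recorded above.
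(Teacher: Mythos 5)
Your proposal is correct and follows essentially the same route as the paper, which derives the $\mathcal{D}$-derivative property by extracting the linear term in $x$ from $Y(Y(u,x)\mathbf{1},z)=e^{x\frac{d}{dz}}Y(u,z)$, then obtains the strong creation property, skew-symmetry, and the $\mathcal{D}$-bracket derivative property in that order via injectivity and the computation (\ref{skewD}). Your explicit observation that the power-series nature of $Y(u,x)\mathbf{1}$ is itself a consequence of vacuum-free skew-symmetry and the vacuum property (rather than an appeal to the excluded creation property) matches the paper's own justification for the hypothesis of the proposition.
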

\begin{flushright} $\square$ \end{flushright}
\begin{prop}
\label{prop:skewSymmetries}
In the presence of the minor axioms of a vertex algebra, the following
are equivalent:
\begin{align*}
(i)& \text{ vacuum-free skew-symmetry}\\ (ii)& 
\text{ skew-symmetry
together with the $\mathcal{D}$-derivative property}\\ 
(iii)& \text{skew-symmetry together with the $\mathcal{D}$-bracket derivative
property}.
\end{align*}
\end{prop}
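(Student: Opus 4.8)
The plan is to prove the three conditions equivalent by assembling the implications already established in the discussion preceding the statement, arranging them into the cycle (i) $\Rightarrow$ (ii), (ii) $\Rightarrow$ (iii), (iii) $\Rightarrow$ (ii), and (ii) $\Rightarrow$ (i). Since the minor axioms are assumed throughout, the operator $\mathcal{D}$ is defined and the $\mathcal{D}$-derivative and $\mathcal{D}$-bracket derivative properties make sense; I would stress at the outset that none of the steps invokes the Jacobi identity.

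First I would dispatch the implications out of (i). By Proposition \ref{prop:minor1}, vacuum-free skew-symmetry, in the presence of the minor axioms, yields skew-symmetry, the $\mathcal{D}$-derivative property, and the $\mathcal{D}$-bracket derivative property simultaneously; in particular (i) $\Rightarrow$ (ii) and (i) $\Rightarrow$ (iii) are immediate. It therefore remains only to close the loop among (ii), (iii), and back to (i).

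Next I would record (ii) $\Rightarrow$ (iii): this is exactly the computation (\ref{skewD}), in which one differentiates the skew-symmetry relation and uses the $\mathcal{D}$-derivative property once to reach the commutator $[\mathcal{D},Y(u,x)]=\frac{d}{dx}Y(u,x)$. For the reverse (iii) $\Rightarrow$ (ii) I would read (\ref{skewD}) backwards: assuming skew-symmetry and the $\mathcal{D}$-bracket derivative property amounts to assuming that the first and last lines of (\ref{skewD}) agree, and since the intermediate equalities there used only skew-symmetry, this forces $Y(\mathcal{D}v,-x)u=-\frac{d}{dx}Y(v,-x)u$, which is the $\mathcal{D}$-derivative property stated for $-x$. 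Finally, for (ii) $\Rightarrow$ (i) I would start from skew-symmetry in the form $Y(u,x)v=e^{x\mathcal{D}}Y(v,-x)u$, apply $Y(\,\cdot\,,z)$ to both sides, and use (\ref{eq:expDder}) (which rests on the $\mathcal{D}$-derivative property) to peel off the exponential, landing on $Y(Y(u,x)v,z)=Y(Y(v,-x)u,z+x)$, that is, vacuum-free skew-symmetry. Combining these, (i) $\Leftrightarrow$ (ii) and (ii) $\Leftrightarrow$ (iii), so all three conditions are equivalent.

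The one point that genuinely requires care — and the place I would slow down — is the reverse reading in (iii) $\Rightarrow$ (ii): I must verify that every equality in (\ref{skewD}) other than the assumed first-equals-last step is reversible and relies only on skew-symmetry, not on the $\mathcal{D}$-derivative property we are trying to conclude, so that no circularity creeps in. Once that reversibility is confirmed, the remaining implications are direct transcriptions of the displayed calculations, and the proof is complete.
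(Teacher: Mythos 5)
Your proposal is correct and matches the paper's own (implicit) proof: the paper states this proposition with no written proof precisely because it is a summary of the preceding derivations — Proposition \ref{prop:minor1} for the implications out of (i), the computation (\ref{skewD}) read forwards and backwards for (ii) $\Leftrightarrow$ (iii), and the short calculation using (\ref{eq:expDder}) for (ii) $\Rightarrow$ (i). Your flagged caution about which equalities in (\ref{skewD}) depend only on skew-symmetry is exactly the right point to check, and it resolves as you expect.
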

\begin{flushright} $\square$ \end{flushright}
\begin{remark} \rm
\label{rem:minor}
Our development of Proposition \ref{prop:minor1} largely parallels
portions of the material presented in Section 3.1 of \cite{LL}.
Perhaps the main difference is that our official proof of the
$\mathcal{D}$-derivative property is based on vacuum-free
skew-symmetry instead of the iterate formula (see equation 3.1.11 and
Proposition 3.1.18 \cite{LL}) and, more generally as well as more
roughly, that our point of view is that all of the minor properties
are due to vacuum-free skew-symmetry even without the full Jacobi
identity.
\end{remark}

Recall that we began this section by substituting the vacuum vector
into the formula for vacuum-free skew-symmetry.  We may pursue a
similar analysis with other formulas to further describe the
dependencies of weaker axioms on stronger ones.  

\begin{prop}
\label{prop:strongdep}
In the presence of the minor axioms of a vertex algebra, the strong
creation property follows from any single one of skew-symmetry, the
$\mathcal{D}$-bracket derivative property, or the
$\mathcal{D}$-derivative property.
\end{prop}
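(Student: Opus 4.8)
The plan is to establish the three implications separately, each time using only the single assumed property together with the minor axioms. The essential point to watch is that I may \emph{not} route through vacuum-free skew-symmetry: by Proposition \ref{prop:skewSymmetries} that property is equivalent to having \emph{two} of the three listed properties, so it is unavailable from just one of them. In particular I cannot recycle the derivation of the strong creation property given just after (\ref{eq:expDder}), since that one passes through (\ref{strongcreation}), which itself rests on vacuum-free skew-symmetry. Throughout I will freely use that the vacuum and creation properties are minor axioms, so that $Y(\mathbf{1},x)=1$, that $Y(u,x)\mathbf{1}\in V[[x]]$ with $Y(u,0)\mathbf{1}=u$, and consequently that $\mathcal{D}\mathbf{1}=\mathbf{1}_{-2}\mathbf{1}=0$.

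The skew-symmetry case is immediate: substituting $v=\mathbf{1}$ into skew-symmetry $Y(u,x)v=e^{x\mathcal{D}}Y(v,-x)u$ and applying the vacuum property to rewrite $Y(\mathbf{1},-x)u=u$ yields $Y(u,x)\mathbf{1}=e^{x\mathcal{D}}u$ at once. For the $\mathcal{D}$-derivative case I would first exponentiate exactly as in the passage producing (\ref{eq:expDder}), obtaining $Y(e^{x\mathcal{D}}u,z)=Y(u,z+x)$ from the $\mathcal{D}$-derivative property and the formal Taylor theorem alone. Rather than combining this with (\ref{strongcreation}), I would apply both sides to $\mathbf{1}$ and then set $z=0$: on the right the creation property gives $Y(u,z+x)\mathbf{1}\big|_{z=0}=Y(u,x)\mathbf{1}$, while on the left linearity together with $Y(\mathcal{D}^{k}u,0)\mathbf{1}=\mathcal{D}^{k}u$ gives $e^{x\mathcal{D}}u$, so the two sides coincide.

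The $\mathcal{D}$-bracket case takes slightly more work. I would apply the $\mathcal{D}$-bracket derivative property, written as $\mathcal{D}Y(u,x)-Y(u,x)\mathcal{D}=\frac{d}{dx}Y(u,x)$, to the vacuum vector; since $\mathcal{D}\mathbf{1}=0$ the middle term drops out and the identity collapses to
\begin{align*}
\frac{d}{dx}\big(Y(u,x)\mathbf{1}\big)=\mathcal{D}\,Y(u,x)\mathbf{1}.
\end{align*}
Setting $f(x)=Y(u,x)\mathbf{1}=\sum_{n\geq 0}f_{n}x^{n}\in V[[x]]$ (the creation property guarantees both the power-series form and $f_{0}=u$), this is the recursion $(n+1)f_{n+1}=\mathcal{D}f_{n}$, whose unique solution is $f_{n}=\frac{1}{n!}\mathcal{D}^{n}u$, that is, $f(x)=e^{x\mathcal{D}}u$.

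I expect the $\mathcal{D}$-bracket case to be the main obstacle, not because any one step is hard but because it is the single implication with no one-line shortcut: I must first extract $\mathcal{D}\mathbf{1}=0$ from the vacuum property, then reduce to the first-order recursion, and finally check that its solution lies in $V[[x]]$ and matches the initial datum supplied by the creation property. The conceptual thread tying all three cases together is to recognize $e^{x\mathcal{D}}u$ as the unique element of $V[[x]]$ with constant term $u$ that the assumed property forces $Y(u,x)\mathbf{1}$ to equal.
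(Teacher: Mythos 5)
Your proof is correct and follows essentially the same route as the paper's: in each of the three cases, specialize the assumed property to the vacuum vector, use $\mathcal{D}\mathbf{1}=0$ and the creation property, and conclude $Y(u,x)\mathbf{1}=e^{x\mathcal{D}}u$. The only difference is cosmetic, in the $\mathcal{D}$-bracket case: the paper invokes its previously derived exponentiated identity (\ref{D-Bracketexp2}) applied to $\mathbf{1}$ and sets $z=0$, whereas you apply the unexponentiated bracket relation to $\mathbf{1}$ and solve the resulting coefficient recursion $(n+1)f_{n+1}=\mathcal{D}f_{n}$ by hand --- the same computation, and your care in noting that vacuum-free skew-symmetry is unavailable from a single property matches the logical structure the paper itself respects.
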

\begin{proof}
We first assume the $\mathcal{D}$-derivative property.  By the
$\mathcal{D}$-derivative property we have
\begin{align*}
Y(v,z+x)\textbf{1}=Y(e^{x\mathcal{D}}v,z)\textbf{1},
\end{align*}  
which by two applications of the creation property allows us to set
$z=0$ and calculate the right-hand side to get
\begin{align*}
Y(v,x)\textbf{1}=e^{x\mathcal{D}}v,
\end{align*}
which is the strong creation property.

We now assume the $\mathcal{D}$-bracket derivative property.  First, note that
by the vacuum property $\mathcal{D}\textbf{1}=\textbf{1}_{-2}\textbf{1}=0$, so
that $e^{x\mathcal{D}}\textbf{1}=\textbf{1}$.  Then by (\ref{D-Bracketexp2}) we
have
\begin{align*}
e^{x\mathcal{D}}Y(v,z)\textbf{1}=Y(v,z+x)\textbf{1},
\end{align*}
which again by two applications of the creation property gives the
strong creation property.  

Finally, we assume skew-symmetry.  We have
\begin{align*}
Y(u,x)\textbf{1}=e^{x\mathcal{D}}Y(\textbf{1},-x)u=e^{x\mathcal{D}}u,
\end{align*}
which is a third time, the strong creation property.  
\end{proof}

We continue to specialize our formulas by substituting in the vacuum
vector.  In the next proposition, we have the relation $a(b1)=b(a1)$
as a classical guide.

\begin{prop}
\label{prop:WCandDBgiveskew}
In the presence of the minor axioms of a vertex algebra, skew-symmetry
follows from weak commutativity together with the $\mathcal{D}$-bracket
derivative property.
\end{prop}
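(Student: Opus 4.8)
The plan is to follow the classical guide $a(b\mathbf{1})=b(a\mathbf{1})$: I would feed the vacuum vector into weak commutativity and then transport the resulting two-variable identity into skew-symmetry using the exponential form of the $\mathcal{D}$-bracket derivative property. Before starting, I would record the two consequences of the $\mathcal{D}$-bracket derivative property that are legitimately available here \emph{without} skew-symmetry or the $\mathcal{D}$-derivative property, both of which are downstream of the present statement. By Proposition \ref{prop:strongdep}, the $\mathcal{D}$-bracket derivative property together with the minor axioms yields the strong creation property $Y(u,x)\mathbf{1}=e^{x\mathcal{D}}u$; and, as noted just after (\ref{D-product1}), it also yields the exponentiated formula (\ref{D-Bracketexp2}), namely $e^{z\mathcal{D}}Y(u,x)v=Y(u,x+z)e^{z\mathcal{D}}v$, via the formal Taylor theorem alone. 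I would be careful to use only these, never the $\mathcal{D}$-derivative property.

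First I would apply the assumed weak commutativity to $\mathbf{1}$ and rewrite the innermost operators by the strong creation property, obtaining
\[ (x_1-x_2)^{m_1}Y(u,x_1)e^{x_2\mathcal{D}}v=(x_1-x_2)^{m_1}Y(v,x_2)e^{x_1\mathcal{D}}u. \]
Here I would, if needed, enlarge $m_1$ (cf. Remark \ref{rem:ms}), which is harmless, so that the exponent also clears the pole at $x_1=0$ of $Y(u,x_1)v$ and the analogous pole on the right. Next I would pull the $\mathcal{D}$-exponentials outside by (\ref{D-Bracketexp2}), rewriting the left side as $(x_1-x_2)^{m_1}e^{x_2\mathcal{D}}Y(u,x_1-x_2)v$ and the right side as $(x_1-x_2)^{m_1}e^{x_1\mathcal{D}}Y(v,x_2-x_1)u$, and then cancel the common invertible factor $e^{x_2\mathcal{D}}$ (which commutes with the scalar $(x_1-x_2)^{m_1}$) to reach
\[ (x_1-x_2)^{m_1}Y(u,x_1-x_2)v=(x_1-x_2)^{m_1}e^{(x_1-x_2)\mathcal{D}}Y(v,x_2-x_1)u. \]

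Finally I would observe that, with $m_1$ chosen large enough, each side is the image under the substitution $x\mapsto x_1-x_2$ of a single-variable element of $V[[x]]$, namely of $x^{m_1}Y(u,x)v$ and of $x^{m_1}e^{x\mathcal{D}}Y(v,-x)u$ respectively. Since that substitution is injective, I could read off the single-variable identity $x^{m_1}Y(u,x)v=x^{m_1}e^{x\mathcal{D}}Y(v,-x)u$ and then cancel $x^{m_1}$ by partial associativity, yielding skew-symmetry $Y(u,x)v=e^{x\mathcal{D}}Y(v,-x)u$.

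The algebraic heart of the argument, feeding in $\mathbf{1}$ and transporting the $\mathcal{D}$-exponentials, is short; the main obstacle is the expansion bookkeeping in the last step. Before multiplication the two sides live in different localizations (nonnegative powers of $x_2$ on the left, of $x_1$ on the right), and it is only after multiplying by $(x_1-x_2)^{m_1}$ with $m_1$ sufficiently large that both become genuine power series in the single combination $x_1-x_2$, so that the factor may be cancelled legitimately. Making precise that the two differently expanded sides are the images of the \emph{same} single-variable series, and justifying the final cancellation via partial associativity, is where the care is needed.
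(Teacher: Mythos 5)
Your proof is correct and follows essentially the same route as the paper's: feed $\mathbf{1}$ into weak commutativity, convert $Y(\,\cdot\,,x)\mathbf{1}$ via the strong creation property (legitimately available from Proposition \ref{prop:strongdep}), and transport the exponential using (\ref{D-Bracketexp2}). The only divergence is the endgame: where you symmetrize the two-variable identity and invoke injectivity of the substitution $x \mapsto x_{1}-x_{2}$ on $V[[x]]$, the paper simply notes that $(x-z)^{k}Y(u,x)Y(v,z)\mathbf{1}$ involves only nonnegative powers of $z$ by the creation property, sets $z=0$, and cancels $x^{k}$ --- which sidesteps exactly the expansion bookkeeping you identify as the delicate point.
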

\begin{proof}
Let $V$ be a vertex algebra.  Let $u, v \in V$. By weak commutativity there
exists some $k \geq 0$ such that:
\begin{align*}
(x-z)^{k}Y(u,x)Y(v,z)\textbf{1}&=(x-z)^{k}Y(v,z)Y(u,x)\textbf{1}\\
&=(x-z)^{k}Y(v,z)e^{x\mathcal{D}}u\\
&=(x-z)^{k}e^{x\mathcal{D}}Y(v,z-x)u,
\end{align*}
and by the creation property we may set $z=0$ and cancel $x^{k}$ which
gives skew-symmetry.  
\end{proof}
\begin{remark} \rm
The statement and proof of Proposition \ref{prop:WCandDBgiveskew}
appeared as a special case of part 1 of Proposition 2.2.4 in
\cite{Li1}, where the greater generality in \cite{Li1} handled the
super-vertex algebra case.
\end{remark}

In fact, we have more:
\begin{prop}
In the presence of the minor axioms of a vertex
algebra\label{prop:WCJI}, weak commutativity together with the
$\mathcal{D}$-bracket derivative property are equivalent to the Jacobi
identity.
\end{prop}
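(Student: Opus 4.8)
The plan is to prove the equivalence in two directions, exploiting the fact that the Jacobi identity has already been shown (in Proposition \ref{prop:two}) to be replaceable by any two of weak commutativity, weak associativity, and weak skew-associativity. For the forward direction, that the Jacobi identity implies weak commutativity together with the $\mathcal{D}$-bracket derivative property, there is essentially nothing new to do: weak commutativity follows directly from Proposition \ref{prop:vvalgrel}, and the $\mathcal{D}$-bracket derivative property was derived from the Jacobi identity earlier in this section (via skew-symmetry and the $\mathcal{D}$-derivative property). So I would simply cite these. The substance of the proposition is the converse.

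For the converse, the strategy is to show that weak commutativity together with the $\mathcal{D}$-bracket derivative property yields a second weak property---namely weak associativity---at which point Proposition \ref{prop:two} (or rather its with-vacuum analogue) delivers the full Jacobi identity. The natural route is through skew-symmetry: Proposition \ref{prop:WCandDBgiveskew} already shows that weak commutativity together with the $\mathcal{D}$-bracket derivative property give skew-symmetry. Moreover, by Proposition \ref{prop:skewSymmetries}, skew-symmetry together with the $\mathcal{D}$-bracket derivative property are equivalent to vacuum-free skew-symmetry. Thus from the hypotheses I recover vacuum-free skew-symmetry. Now I have vacuum-free skew-symmetry together with weak commutativity, and Proposition \ref{prop:weakcommplusskew} (vacuum-free vertex algebra version) establishes that these combine to give the full Jacobi identity. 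This chain is the cleanest path: weak commutativity plus $\mathcal{D}$-bracket derivative property gives skew-symmetry, which upgrades to vacuum-free skew-symmetry, which combines back with weak commutativity to force the Jacobi identity.

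First I would dispose of the forward implication in a sentence or two by citing the earlier derivations. Then, for the converse, I would invoke Proposition \ref{prop:WCandDBgiveskew} to obtain skew-symmetry, then invoke Proposition \ref{prop:skewSymmetries} to obtain vacuum-free skew-symmetry from skew-symmetry and the $\mathcal{D}$-bracket derivative property, and finally invoke Proposition \ref{prop:weakcommplusskew} to conclude that vacuum-free skew-symmetry together with weak commutativity yield the Jacobi identity. The main thing to be careful about is bookkeeping: checking that the minor axioms needed by each cited proposition are all available in the present with-vacuum setting, and that Proposition \ref{prop:weakcommplusskew}, stated for vacuum-free vertex algebras, applies here since a vertex algebra is in particular a vacuum-free vertex algebra.

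The step I expect to be the main (though modest) obstacle is verifying that invoking Proposition \ref{prop:weakcommplusskew} is legitimate---that is, confirming that its proof, which uses injectivity of vertex operators and the analogue $a(bc)=(bc)a=(cb)a=a(cb)=c(ab)=(ab)c$, goes through unchanged in the presence of a vacuum vector, and that no circularity arises from the fact that we derived vacuum-free skew-symmetry using the $\mathcal{D}$-bracket derivative property rather than from the Jacobi identity. Since Proposition \ref{prop:weakcommplusskew} takes vacuum-free skew-symmetry and weak commutativity as hypotheses without assuming the Jacobi identity, there is no circularity, and the argument closes.
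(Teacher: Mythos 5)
Your proposal is correct and follows essentially the same route as the paper: the paper's proof of this proposition is precisely the chain Proposition \ref{prop:WCandDBgiveskew} $\rightarrow$ Proposition \ref{prop:skewSymmetries} $\rightarrow$ Proposition \ref{prop:weakcommplusskew}, with the forward direction left implicit as you note. Your additional bookkeeping remarks about applicability of the vacuum-free result and non-circularity are sound but not needed beyond what the paper records.
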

\begin{proof}
The result follows from Proposition \ref{prop:WCandDBgiveskew},
Proposition \ref{prop:skewSymmetries} and Proposition
\ref{prop:weakcommplusskew}.
\end{proof}
\begin{remark} \rm
Proposition \ref{prop:WCJI} appeared in Theorem 3.5.1 \cite{LL}, where
Proposition \ref{prop:WCandDBgiveskew} was obtained during the course
of the proof.  Our development is similar to, but a variant of, the
proof presented in \cite{LL}.
\end{remark}

We next consider specializing the weak associativity property.  We have
\begin{prop}
\label{prop:WADDER}
In the presence of the minor axioms of a vertex algebra, weak
associativity implies the $\mathcal{D}$-derivative property.
\end{prop}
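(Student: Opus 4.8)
The plan is to derive the $\mathcal{D}$-derivative property from weak associativity by specializing one of the vectors to the vacuum vector $\textbf{1}$, mirroring the overall strategy of this section in which each minor property is obtained by plugging $\textbf{1}$ into a stronger formula. Weak associativity, in the sense of Proposition \ref{prop:vvalgrel}, states that for all $u,v\in V$ there exists $m_{2}\geq 0$ such that
\begin{align*}
(x_{0}+x_{2})^{m_{2}}\left(Y(u,x_{0}+x_{2})Y(v,x_{2})w-Y(Y(u,x_{0})v,x_{2})w\right)=0.
\end{align*}
The natural specialization is to set $v=\textbf{1}$, so that the inner operator $Y(\textbf{1},x_{0})$ collapses via the vacuum property $Y(\textbf{1},x_{0})=1$, giving $Y(u,x_{0})\textbf{1}$ inside the outer vertex operator on the right, while the left term becomes $Y(u,x_{0}+x_{2})Y(\textbf{1},x_{2})w=Y(u,x_{0}+x_{2})w$.

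First I would substitute $v=\textbf{1}$ into the weak associativity relation. The right-hand iterate becomes $Y(Y(u,x_{0})\textbf{1},x_{2})w$, and since the creation property guarantees $Y(u,x_{0})\textbf{1}\in V[[x_{0}]]$, the factor $(x_{0}+x_{2})^{m_{2}}$ no longer threatens to collide with any negative powers coming from $x_{0}$; in fact the only potentially singular behavior is in $x_{2}$, so I expect to be able to clear the relation and conclude that
\begin{align*}
Y(u,x_{0}+x_{2})w=Y(Y(u,x_{0})\textbf{1},x_{2})w
\end{align*}
holds as an identity (both sides lying in an appropriate space where the cancellation of $(x_{0}+x_{2})^{m_{2}}$ is legitimate via partial associativity, as in the proof of (iia) in Proposition \ref{prop:elem}). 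Since this holds when applied to every $w$, the underlying operators agree, yielding $Y(u,x_{0}+x_{2})=Y(Y(u,x_{0})\textbf{1},x_{2})$, which is exactly the strong-creation-type formula \eqref{strongcreation}.

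Next I would extract the linear term in $x_{0}$. Writing $Y(u,x_{0}+x_{2})=e^{x_{0}\frac{d}{dx_{2}}}Y(u,x_{2})$ by the formal Taylor theorem, the coefficient of $x_{0}^{1}$ on the left is $\frac{d}{dx_{2}}Y(u,x_{2})$; on the right, expanding $Y(u,x_{0})\textbf{1}=\sum_{n}u_{n}x_{0}^{-n-1}\textbf{1}$ and using that this lies in $V[[x_{0}]]$, the coefficient of $x_{0}^{1}$ is $Y(u_{-2}\textbf{1},x_{2})=Y(\mathcal{D}u,x_{2})$. Equating these gives the $\mathcal{D}$-derivative property
\begin{align*}
Y(\mathcal{D}u,x_{2})=\frac{d}{dx_{2}}Y(u,x_{2}).
\end{align*}

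The main obstacle I anticipate is justifying the cancellation of the factor $(x_{0}+x_{2})^{m_{2}}$ rigorously, since this is exactly the subtle point where ordinary cancellation in formal series can fail. I expect to handle it as in Proposition \ref{prop:elem}: because $Y(u,x_{0})\textbf{1}$ is a genuine power series in $x_{0}$ (by the creation property), the expression $Y(Y(u,x_{0})\textbf{1},x_{2})w$ lies in a space where multiplication by $(x_{0}+x_{2})^{m_{2}}$ is injective, so partial associativity lets us strip the factor without ambiguity. Everything else is the routine bookkeeping of reading off a coefficient, so the crux is confirming that this specialization respects all the existence and truncation conditions, after which the $\mathcal{D}$-derivative property drops out immediately.
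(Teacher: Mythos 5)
Your proposal is correct and follows essentially the same route as the paper's proof: specialize $v=\textbf{1}$ in weak associativity, use the vacuum and creation properties to justify cancelling the factor $(x_{0}+x_{2})^{l}$ via partial associativity, and then read off the coefficient of $x_{0}^{1}$ using the formal Taylor theorem. The only detail the paper makes more explicit is choosing $l$ large enough that the left-hand side involves only nonnegative powers of $(x_{0}+x_{2})$, which licenses rewriting $Y(u,x_{0}+x_{2})w$ as $Y(u,x_{2}+x_{0})w$ before multiplying by $(x_{2}+x_{0})^{-l}$; this is the precise form of the truncation argument you correctly anticipate as the crux.
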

\begin{proof}
Let $V$ be a vertex algebra.  Let $u,w \in V$.  There exists $l \geq
0$ such that
\begin{align*}
(x_{0}+x_{2})^{l}Y(u,x_{0}+x_{2})Y(\textbf{1},x_{2})w&=(x_{0}+x_{2})^{l}
Y(Y(u,x_{0})\textbf{1},x_{2})w,
\end{align*}
and also such that the left-hand side of the equation is written in terms of
nonnegative powers of $(x_{0}+x_{2})$. Thus we have:
\begin{align*}
(x_{0}+x_{2})^{l}Y(u,x_{0}+x_{2})Y(\textbf{1},x_{2})w&=(x_{0}+x_{2})^{l}
Y(u,x_{2}+x_{0})Y(\textbf{1},x_{2})w\\
&=(x_{0}+x_{2})^{l}Y(u,x_{2}+x_{0})w.
\end{align*}
Then substituting this, we notice that $x_{0}$ is appropriately
truncated so that we can cancel $(x_{0}+x_{2})^{l}$ by multiplying by
$(x_{2}+x_{0})^{-l}$ and applying partial associativity.  This
gives us:
\begin{align*}
Y(u,x_{2}+x_{0})w&=Y(Y(u,x_{0})\textbf{1},x_{2})w \Leftrightarrow\\
e^{x_{0}\frac{d}{dx_{2}}}Y(u,x_{2})w&=Y(Y(u,x_{0})\textbf{1},x_{2})w.
\end{align*}
Looking at the linear term in $x_{0}$ gives the result.
\end{proof}
\begin{prop}
\label{prop:WAplusSCgivesDB}
In the presence of the minor axioms of a vertex algebra, weak
associativity together with the strong creation property imply the
$\mathcal{D}$-bracket derivative formula.
\end{prop}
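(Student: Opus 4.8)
The plan is to substitute the vacuum vector into weak associativity and then use the strong creation property to convert both sides into the exponentiated form of the desired bracket identity, namely (\ref{D-Bracketexp2}); a single differentiation in the ``inner'' variable at $0$ then yields the $\mathcal{D}$-bracket derivative property. First I would invoke weak associativity (available from Proposition \ref{prop:vvalgrel}, since a vertex algebra is in particular a vacuum-free vertex algebra) with $w=\textbf{1}$, so that for some $m_{2}\ge 0$,
\[
(x_{0}+x_{2})^{m_{2}}\left(Y(u,x_{0}+x_{2})Y(v,x_{2})\textbf{1}-Y(Y(u,x_{0})v,x_{2})\textbf{1}\right)=0 .
\]
Applying the strong creation property $Y(a,x_{2})\textbf{1}=e^{x_{2}\mathcal{D}}a$ to the vector $v$ in the first term and coefficient-wise to the series $Y(u,x_{0})v\in V((x_{0}))$ in the second term rewrites this as
\[
(x_{0}+x_{2})^{m_{2}}\left(Y(u,x_{0}+x_{2})e^{x_{2}\mathcal{D}}v-e^{x_{2}\mathcal{D}}Y(u,x_{0})v\right)=0 .
\]

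Next I would cancel the polynomial factor $(x_{0}+x_{2})^{m_{2}}$. The point is that both $e^{x_{2}\mathcal{D}}Y(u,x_{0})v$ and $Y(u,x_{0}+x_{2})e^{x_{2}\mathcal{D}}v$ lie in $V((x_{0}))[[x_{2}]]$: for the second expression, $e^{x_{2}\mathcal{D}}v\in V[[x_{2}]]$ and $Y(u,x_{0}+x_{2})=e^{x_{2}\frac{\partial}{\partial x_{0}}}Y(u,x_{0})$ (by the formal Taylor theorem) keeps each power of $x_{2}$ lower-truncated in $x_{0}$. On $V((x_{0}))[[x_{2}]]$ multiplication by $(x_{0}+x_{2})^{m_{2}}$ is injective, since the lowest $x_{2}$-degree term of $(x_{0}+x_{2})^{m_{2}}$ is $x_{0}^{m_{2}}$ and hence it cannot annihilate a nonzero element; this is the same truncation-plus-partial-associativity cancellation used in the proof of Proposition \ref{prop:WADDER}. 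Carrying it out yields
\[
e^{x_{2}\mathcal{D}}Y(u,x_{0})v=Y(u,x_{0}+x_{2})e^{x_{2}\mathcal{D}}v ,
\]
which is exactly (\ref{D-Bracketexp2}) (with $x\mapsto x_{0}$, $z\mapsto x_{2}$).

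Finally, I would differentiate both sides with respect to $x_{2}$ and set $x_{2}=0$. The left side gives $\mathcal{D}Y(u,x_{0})v$, while the right side, by the product rule together with $\frac{d}{dx_{2}}Y(u,x_{0}+x_{2})|_{x_{2}=0}=\frac{d}{dx_{0}}Y(u,x_{0})$ and $\frac{d}{dx_{2}}e^{x_{2}\mathcal{D}}v|_{x_{2}=0}=\mathcal{D}v$, gives $\frac{d}{dx_{0}}Y(u,x_{0})v+Y(u,x_{0})\mathcal{D}v$. Hence
\[
\mathcal{D}Y(u,x_{0})v=\frac{d}{dx_{0}}Y(u,x_{0})v+Y(u,x_{0})\mathcal{D}v ,
\]
which is the $\mathcal{D}$-bracket derivative property in the rearranged form (\ref{D-product1}). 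The main obstacle is the middle step: verifying the truncation conditions precisely enough that partial associativity legitimizes the cancellation of $(x_{0}+x_{2})^{m_{2}}$; the opening and closing steps are routine bookkeeping with the strong creation property and the formal Taylor theorem.
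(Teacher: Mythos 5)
Your proposal is correct and follows essentially the same route as the paper: set $w=\textbf{1}$ in weak associativity, rewrite both sides via the strong creation property, and cancel $(x_{0}+x_{2})^{l}$ using the nonnegative truncation in $x_{2}$ to arrive at the exponentiated identity (\ref{D-Bracketexp2}). The paper stops at that exponentiated form (having already established its equivalence with the bracket formula), while you add the routine final step of extracting the linear term in $x_{2}$; this is only a presentational difference.
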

\begin{proof}
Let $V$ be a vertex algebra.  Let $u,v \in V$.  There exists $l \geq
0$ such that:
\begin{align*}
(x_{0}+x_{2})^{l}Y(u,x_{0}+x_{2})Y(v,x_{2})\textbf{1}&=(x_{0}+x_{2})^{l}
Y(Y(u,x_{0})v,x_{2})\textbf{1}
\Leftrightarrow\\
(x_{0}+x_{2})^{l}Y(u,x_{0}+x_{2})e^{x_{2}\mathcal{D}}v&=(x_{0}+x_{2})^{l}
e^{x_{2}\mathcal{D}}Y(u,x_{0})v,
\end{align*}
which has only nonnegative powers of $x_{2}$ so that we may cancel
$(x_{0}+x_{2})^{l}$ which gives us the $\mathcal{D}$-bracket
derivative formula.
\end{proof}
\begin{remark} \rm
\label{rem:Li2a}
Propositions \ref{prop:WADDER} and \ref{prop:WAplusSCgivesDB} were
already essentially obtained as Proposition 2.6 in \cite{Li2}.  In
fact, Proposition 2.6 in \cite{Li2} was a stronger result, which shows
that the assumption of the strong creation property could have been
removed from Proposition \ref{prop:WAplusSCgivesDB}.  Comparing
arguments, we note that in the proof of Proposition
\ref{prop:WAplusSCgivesDB} we could have canceled $(x_{0}+x_{2})^{l}$
in the first line and extracted the coefficient of $x_{2}$ using the
creation property instead of extracting the coefficients of all powers
of $x_{2}$ using the strong creation property.  We would have arrived
at the ``unexponentiated'' form of the $\mathcal{D}$-bracket
derivative formula instead of the ``exponentiated'' form we did arrive
at, in parallel with the fact that the creation property is an
unexponentiated form of the strong creation property.  Corollary 2.7
in \cite{Li2} recovers the relevant ``exponentiated'' identities as a
consequence.
\end{remark}
\begin{prop}
\label{prop:WASSJI}
In the presence of the minor axioms of a vertex algebra, weak
associativity together with skew-symmetry are equivalent to the Jacobi
identity.
\end{prop}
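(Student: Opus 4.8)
Proposition \ref{prop:WASSJI} asserts that, given the minor axioms, weak associativity together with skew-symmetry is equivalent to the Jacobi identity.

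The plan is to reduce this proposition to results already established, in the same spirit as the proof of Proposition \ref{prop:WCJI}. The forward direction is immediate: the Jacobi identity gives weak associativity by Proposition \ref{prop:vvalgrel}, and skew-symmetry has already been derived from vacuum-free skew-symmetry (which itself follows from the Jacobi identity by Proposition \ref{prop:vfss}) in the development following the definition of the $\mathcal{D}$ operator. So the substance is the converse: weak associativity together with skew-symmetry must imply the Jacobi identity.

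For the converse, my strategy is to show that weak associativity and skew-symmetry together recover vacuum-free skew-symmetry, and then to invoke Proposition \ref{prop:WAplusVFSSforJac}'s algebra analogue, namely Proposition \ref{prop:weakassocplusskew}, which states that weak associativity together with vacuum-free skew-symmetry can replace the Jacobi identity. The key intermediate step is to produce vacuum-free skew-symmetry. Here I would use Proposition \ref{prop:WADDER}: weak associativity implies the $\mathcal{D}$-derivative property. Once the $\mathcal{D}$-derivative property is in hand, I can combine it with the assumed skew-symmetry to recover vacuum-free skew-symmetry, exactly as in the calculation displayed just before Proposition \ref{prop:minor1}, where one computes
\[
Y(Y(u,x)v,z)=Y(e^{x\mathcal{D}}Y(v,-x)u,z)=Y(Y(v,-x)u,z+x),
\]
using skew-symmetry for the first equality and the strong creation property (equivalently, equation (\ref{eq:expDder}) coming from the $\mathcal{D}$-derivative property) for the second. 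This chain—weak associativity gives the $\mathcal{D}$-derivative property (Proposition \ref{prop:WADDER}), and the $\mathcal{D}$-derivative property together with skew-symmetry gives vacuum-free skew-symmetry—delivers vacuum-free skew-symmetry from the stated hypotheses.

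With vacuum-free skew-symmetry and weak associativity both available, Proposition \ref{prop:weakassocplusskew} immediately yields the full Jacobi identity, completing the converse. The main obstacle, such as it is, lies in the bookkeeping of the intermediate step: I must be careful that deriving the $\mathcal{D}$-derivative property via Proposition \ref{prop:WADDER} and then recovering vacuum-free skew-symmetry does not secretly circularly invoke the Jacobi identity, since earlier in the section the $\mathcal{D}$-derivative property was first obtained using the vacuum property in conjunction with vacuum-free skew-symmetry. Fortunately Proposition \ref{prop:WADDER} is stated and proved using only weak associativity and the minor axioms, with no appeal to the Jacobi identity, so the argument is genuinely non-circular. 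Thus the proof is simply an assembly of Propositions \ref{prop:WADDER}, \ref{prop:skewSymmetries}, and \ref{prop:weakassocplusskew}, entirely parallel in structure to the proof of Proposition \ref{prop:WCJI}.
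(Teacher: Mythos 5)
Your argument is correct and coincides with the paper's own reasoning; in fact the paper gives two proofs of this proposition, and yours reproduces the second (``slight variant'') one exactly: weak associativity yields the $\mathcal{D}$-derivative property via Proposition \ref{prop:WADDER}, Proposition \ref{prop:skewSymmetries} then upgrades skew-symmetry to vacuum-free skew-symmetry, and Proposition \ref{prop:weakassocplusskew} finishes. Your check that Proposition \ref{prop:WADDER} does not circularly invoke the Jacobi identity is also the right point to verify, and it holds as you say.
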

\begin{proof}
By Proposition \ref{prop:strongdep} we have the strong creation
property and so by Proposition \ref{prop:WAplusSCgivesDB} we have the
$\mathcal{D}$-bracket derivative formula.  Then by Proposition
\ref{prop:skewSymmetries} we have vacuum-free skew-symmetry and so the
result follows from Proposition \ref{prop:weakassocplusskew}.
\end{proof}
We also have a slight variant proof of the last proposition.
\begin{proof}
By Proposition \ref{prop:WADDER} we have the $\mathcal{D}$-derivative
property so that by Proposition \ref{prop:skewSymmetries} we have
vacuum-free skew-symmetry and so the result follows from Proposition
\ref{prop:weakassocplusskew}.
\end{proof}
\begin{remark} \rm
Proposition \ref{prop:WASSJI} appeared in Theorem 3.6.1 in \cite{LL},
and Proposition \ref{prop:WAplusSCgivesDB} was essentially obtained in
the course of their proof.  Our present result generalizes more easily
to the module case.  In \cite{LL} the authors needed a further
argument, which they formulated in Theorem 3.6.3 \cite{LL}, in order
to obtain the corresponding result for a module.  In the course of the
proof of Theorem 3.6.3 in \cite{LL}, Proposition \ref{prop:WADDER} was
also obtained, though not stated separately.  It was our interest in
seeking an alternative to Theorem 3.6.3 in \cite{LL} that was the
original motivation for this paper.
\end{remark}

And finally, we consider weak skew-associativity.  
\begin{prop}
\label{prop:WSAgivesDDER}
In the presence of the minor axioms of a vertex algebra, weak
skew-associativity implies the $\mathcal{D}$-derivative property.
\end{prop}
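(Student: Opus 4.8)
The strategy mirrors the proof of Proposition \ref{prop:WADDER}, substituting the vacuum vector $\textbf{1}$ into weak skew-associativity rather than weak associativity. The key structural fact to exploit is that weak skew-associativity has ``$c$'' (here $w$) in the rightmost position, just as weak associativity does, so the same substitution philosophy should apply. I would start from the weak skew-associativity relation for a module over the algebra itself, specializing the middle vector to $\textbf{1}$.

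First I would write down weak skew-associativity for suitable vectors, with $v$ (the vector whose operator sits outermost after rearrangement) taken to be $\textbf{1}$. Recalling the statement, weak skew-associativity reads: there exists $m_{3} \geq 0$ such that
\begin{align*}
(x_{1}-x_{0})^{m_{3}}\left(Y(v,-x_{0}+x_{1})Y(u,x_{1})w-Y(Y(u,x_{0})v,x_{1}-x_{0})w\right)=0.
\end{align*}
Setting $v=\textbf{1}$ and using the vacuum property $Y(\textbf{1},x)=1$ on the left-hand operator, the first term collapses to $(x_{1}-x_{0})^{l}Y(u,x_{1})w$. On the right-hand side, the strong creation property (available via Proposition \ref{prop:strongdep} from the $\mathcal{D}$-derivative property, or directly from the minor axioms together with the properties already derived) lets me rewrite the inner iterate $Y(u,x_{0})\textbf{1}=e^{x_{0}\mathcal{D}}u$, turning the right-hand side into $(x_{1}-x_{0})^{l}Y(e^{x_{0}\mathcal{D}}u,x_{1}-x_{0})w$.

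Next I would check the truncation: the surviving left-hand term $Y(u,x_{1})w$ involves only powers of $x_{1}$, so after clearing poles I can legitimately cancel $(x_{1}-x_{0})^{l}$ by multiplying by $(x_{1}-x_{0})^{-l}$ and invoking partial associativity, exactly as in the proof of Proposition \ref{prop:WADDER}. This yields the clean relation
\begin{align*}
Y(u,x_{1})w=Y(e^{x_{0}\mathcal{D}}u,x_{1}-x_{0})w.
\end{align*}
I would then reorganize using the formal Taylor theorem: the right-hand side is $e^{-x_{0}\frac{d}{dx_{1}}}Y(e^{x_{0}\mathcal{D}}u,x_{1})w$, or equivalently I would shift variables so that extracting the coefficient of the linear term in $x_{0}$ isolates $Y(\mathcal{D}u,x_{1})w$ against $\frac{d}{dx_{1}}Y(u,x_{1})w$. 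Comparing the linear-in-$x_{0}$ terms then gives the $\mathcal{D}$-derivative property.

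The main obstacle I anticipate is a bookkeeping subtlety rather than a conceptual one: one must confirm that the strong creation property is genuinely available at this point in the logical development (it is, by Proposition \ref{prop:strongdep}, given the $\mathcal{D}$-derivative property—but here we are trying to \emph{derive} the $\mathcal{D}$-derivative property, so I must instead use only the strong creation property as furnished directly from the minor axioms, or restructure to avoid circularity). The cleanest route is to use only the vacuum property $Y(u,x_{0})\textbf{1} \in V[[x_{0}]]$ together with the creation property value $Y(u,0)\textbf{1}=u$ and its first derivative $\mathcal{D}u = u_{-2}\textbf{1}$, extracting the constant and linear coefficients in $x_{0}$ by hand rather than invoking the exponentiated strong creation form. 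With that care, the linear term in $x_{0}$ produces precisely $Y(\mathcal{D}u,x_{1})w=\frac{d}{dx_{1}}Y(u,x_{1})w$, as desired.
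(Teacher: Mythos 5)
Your proposal is correct and, once you discard the circular detour through the strong creation property in favor of your own suggested fix (extracting the constant and linear coefficients of $x_{0}$ directly from the iterate $Y(u,x_{0})\textbf{1}$, using only the creation property and the definition $\mathcal{D}u=u_{-2}\textbf{1}$), it coincides with the paper's own proof. The paper likewise substitutes $v=\textbf{1}$ into weak skew-associativity, cancels $(x_{1}-x_{0})^{m}$ because both sides involve only nonnegative powers of $x_{0}$, applies the formal Taylor theorem to move the shift $x_{1}-x_{0}$ onto the outer variable, and reads off the coefficient of $x_{0}^{1}$.
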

\begin{proof} Let $V$ be a vertex algebra.  Let $u,w \in V$.  There exists $m \geq 0$ such that:
\begin{align*}
(x_{1}-x_{0})^{m}Y(u,x_{1})w&=(x_{1}-x_{0})^{m}Y(Y(u,x_{0})\textbf{1},x_{1}-x_{0})w
\Leftrightarrow\\ Y(u,x_{1})w&=Y(Y(u,x_{0})\textbf{1},x_{1}-x_{0})w
\Leftrightarrow\\
Y(u,x_{1})w&=e^{-x_{0}\frac{d}{dx_{1}}}Y(Y(u,x_{0})\textbf{1},x_{1})w
\Leftrightarrow\\
e^{x_{0}\frac{d}{dx_{1}}}Y(u,x_{1})w&=Y(Y(u,x_{0})\textbf{1},x_{1})w,
\end{align*}
where the cancellation of $(x_{1}-x_{0})^{m}$ was justified because
both sides had only nonnegative powers of $x_{0}$.  Then taking
coefficient of the first power of $x_{0}$ gives us the $\mathcal{D}$-derivative
property.
\end{proof}
We can substitute $\textbf{1}$ for still another vector to get:
\begin{prop}
\label{prop:wsssdb}
In the presence of the minor axioms of a vertex algebra, the following
are equivalent:
\begin{align*}
&(i)\text{ weak skew-associativity together with skew-symmetry}\\
&(ii)\text{ weak skew-associativity together with the $\mathcal{D}$-bracket
derivative property}.
\end{align*}
\end{prop}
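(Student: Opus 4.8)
The plan is to exploit that weak skew-associativity is common to both (i) and (ii), so that the real content of the proposition is the equivalence, under weak skew-associativity and the minor axioms, of skew-symmetry and the $\mathcal{D}$-bracket derivative property. The first thing I would record is that, by Proposition \ref{prop:WSAgivesDDER}, weak skew-associativity already yields the $\mathcal{D}$-derivative property; hence in both (i) and (ii) the $\mathcal{D}$-derivative property is available for free, and by Proposition \ref{prop:strongdep} so is the strong creation property.

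For the direction (i) $\Rightarrow$ (ii) this reduces matters to a citation: having skew-symmetry together with the $\mathcal{D}$-derivative property is exactly statement (ii) of Proposition \ref{prop:skewSymmetries}, which is equivalent to statement (iii) there, skew-symmetry together with the $\mathcal{D}$-bracket derivative property; so the $\mathcal{D}$-bracket derivative property follows and, combined with the assumed weak skew-associativity, we obtain (ii).

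For the direction (ii) $\Rightarrow$ (i), which I expect to be the real work, the plan is to follow the method used just before the proposition and in Proposition \ref{prop:WCandDBgiveskew}, namely to substitute $\textbf{1}$ into weak skew-associativity. Taking $w=\textbf{1}$ in weak skew-associativity and applying the strong creation property to both iterates gives, for suitable $m \geq 0$,
\begin{align*}
(x_1-x_0)^{m}Y(v,-x_0+x_1)e^{x_1\mathcal{D}}u = (x_1-x_0)^{m}e^{(x_1-x_0)\mathcal{D}}Y(u,x_0)v.
\end{align*}
Then I would use (\ref{D-Bracketexp2}) (available since we are assuming the $\mathcal{D}$-bracket derivative property) to rewrite $Y(v,-x_0+x_1)e^{x_1\mathcal{D}}u = e^{x_1\mathcal{D}}Y(v,-x_0)u$, and apply $e^{-x_1\mathcal{D}}$ to both sides, using $e^{-x_1\mathcal{D}}e^{(x_1-x_0)\mathcal{D}} = e^{-x_0\mathcal{D}}$, to reach
\begin{align*}
(x_1-x_0)^{m}Y(v,-x_0)u = (x_1-x_0)^{m}e^{-x_0\mathcal{D}}Y(u,x_0)v.
\end{align*}
Since neither side other than the factor $(x_1-x_0)^{m}$ depends on $x_1$, comparing the coefficient of the top power $x_1^{m}$ cancels this factor, and applying $e^{x_0\mathcal{D}}$ yields exactly skew-symmetry $Y(u,x_0)v = e^{x_0\mathcal{D}}Y(v,-x_0)u$.

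The main obstacle, and the step I would be most careful about, is the cancellation of $(x_1-x_0)^{m}$: one must check the relevant existence conditions so that the substitution $w=\textbf{1}$ and the use of (\ref{D-Bracketexp2}) are legitimate (as flagged in the remark following Proposition \ref{prop:forwardenhanceddelta}), and then observe that because the surviving expressions are independent of $x_1$ the polynomial factor is not a zero divisor and may be removed by extracting the top power of $x_1$. Everything else is routine bookkeeping with the commuting exponentials $e^{x_1\mathcal{D}}$, $e^{x_0\mathcal{D}}$ and $e^{(x_1-x_0)\mathcal{D}}$.
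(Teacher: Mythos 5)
Your proof is correct and follows essentially the same route as the paper: substitute $\textbf{1}$ into weak skew-associativity, use Proposition \ref{prop:WSAgivesDDER} and the strong creation property, and manipulate the exponentials $e^{x_{1}\mathcal{D}}$, $e^{(x_{1}-x_{0})\mathcal{D}}$. The only cosmetic differences are that the paper cancels $(x_{1}-x_{0})^{m}$ \emph{before} invoking either hypothesis, arriving at the single bridge identity $e^{-x_{1}\mathcal{D}}Y(v,-x_{0}+x_{1})e^{x_{1}\mathcal{D}}u=e^{-x_{0}\mathcal{D}}Y(u,x_{0})v$ from which both directions are read off at once, whereas you dispatch (i)$\Rightarrow$(ii) by citing Proposition \ref{prop:skewSymmetries} and, in (ii)$\Rightarrow$(i), apply the exponentiated $\mathcal{D}$-bracket identity first so that the cancellation becomes a trivial coefficient extraction.
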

\begin{proof}
Let $V$ be a vector space satisfying the relevant axioms.  Let $u,v
\in V$.  We assume $V$ satisfies weak skew-associativity.  By Proposition
\ref{prop:WSAgivesDDER} we have the $\mathcal{D}$-derivative property.
Then by Proposition \ref{prop:strongdep} we have the strong creation
property.  Then we have:
\begin{align*}
(x_{1}-x_{0})^{m}Y(v,-x_{0}+x_{1})Y(u,x_{1})\textbf{1}&=(x_{1}-x_{0})^{m}
Y(Y(u,x_{0})v,x_{1}-x_{0})\textbf{1}
\Leftrightarrow\\
(x_{1}-x_{0})^{m}Y(v,-x_{0}+x_{1})e^{x_{1}\mathcal{D}}u&=(x_{1}-x_{0})^{m}
e^{(x_{1}-x_{0})\mathcal{D}}Y(u,x_{0})v.
\end{align*}
Observing that both sides are truncated from below in $x_{1}$
appropriately we can cancel $(x_{1}-x_{0})^{m}$ from both sides to
get:
\begin{align*}
Y(v,-x_{0}+x_{1})e^{x_{1}\mathcal{D}}u&=e^{(x_{1}-x_{0})\mathcal{D}}Y(u,x_{0})v
\Leftrightarrow\\
e^{-x_{1}\mathcal{D}}Y(v,-x_{0}+x_{1})e^{x_{1}\mathcal{D}}u&=e^{-x_{0}\mathcal{D}}Y(u,x_{0})v,
\end{align*}
{}from which it is clear that either the $\mathcal{D}$-bracket
derivative property (in exponentiated form) or skew-symmetry each
implies the other.
\end{proof}

\begin{remark} \rm
\label{rem:Li2b}
We note that the argument in the proof of Proposition
\ref{prop:wsssdb} could have been changed to depend on only the
creation property instead of the strong creation property in a manner
similar to the changes discussed in Remark \ref{rem:Li2a}.
\end{remark}

We can now state two more replacement axioms for the Jacobi identity.
\begin{prop}
\label{prop:WSADBandSS}
In the presence of the minor axioms of a vertex algebra, weak
skew-associativity together with either single one of skew-symmetry or
the $\mathcal{D}$-bracket derivative property is equivalent to the
Jacobi identity.
\end{prop}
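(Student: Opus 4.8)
The plan is to prove only the nontrivial direction, since the converse---that the Jacobi identity yields weak skew-associativity together with either skew-symmetry or the $\mathcal{D}$-bracket derivative property---has essentially already been established: weak skew-associativity follows from the Jacobi identity via Proposition \ref{prop:vvalgrel} (a vertex algebra with vacuum being in particular a vacuum-free vertex algebra), while skew-symmetry and the $\mathcal{D}$-bracket derivative property were derived from the Jacobi identity earlier in this section. So it remains to show that weak skew-associativity together with either single minor property implies the Jacobi identity.

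First I would invoke Proposition \ref{prop:wsssdb} to collapse the two cases into one. Since that proposition states that, in the presence of the minor axioms, weak skew-associativity together with skew-symmetry is equivalent to weak skew-associativity together with the $\mathcal{D}$-bracket derivative property, it suffices to treat a single case; I would work with weak skew-associativity together with skew-symmetry.

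Next I would assemble the known implications to reach vacuum-free skew-symmetry. By Proposition \ref{prop:WSAgivesDDER}, weak skew-associativity supplies the $\mathcal{D}$-derivative property. Combining this with the assumed skew-symmetry and applying the equivalence $(i) \Leftrightarrow (ii)$ of Proposition \ref{prop:skewSymmetries} then yields vacuum-free skew-symmetry. At this point we have both weak skew-associativity and vacuum-free skew-symmetry, so Proposition \ref{prop:weakskewassocplusskew} delivers the full Jacobi identity, completing the argument.

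I do not anticipate a serious obstacle, since every ingredient has already been proved; the proof is essentially a matter of chaining the right propositions in the correct order. The one point requiring care is the initial reduction: I must apply Proposition \ref{prop:wsssdb} up front so that only the skew-symmetry case needs to be handled directly, rather than proving the $\mathcal{D}$-bracket case separately. This mirrors, and slightly streamlines, the structure of the proof of Proposition \ref{prop:WASSJI}, with weak skew-associativity and Propositions \ref{prop:WSAgivesDDER}, \ref{prop:wsssdb} and \ref{prop:weakskewassocplusskew} playing the roles that weak associativity and Propositions \ref{prop:WADDER}, \ref{prop:WAplusSCgivesDB} and \ref{prop:weakassocplusskew} played there.
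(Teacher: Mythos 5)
Your proposal is correct and follows essentially the same route as the paper's own proof: reduce to the skew-symmetry case via Proposition \ref{prop:wsssdb}, obtain the $\mathcal{D}$-derivative property from Proposition \ref{prop:WSAgivesDDER}, pass to vacuum-free skew-symmetry via Proposition \ref{prop:skewSymmetries}, and conclude with Proposition \ref{prop:weakskewassocplusskew}. The only difference is that you explicitly record the (routine) converse direction, which the paper leaves implicit.
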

\begin{proof}
By Proposition \ref{prop:wsssdb} the two statements each follow from
the other.  Therefore it is enough to show the case where we assume
weak skew-associativity together with skew-symmetry.  By Proposition
\ref{prop:WSAgivesDDER} we have the $\mathcal{D}$-derivative property,
so that by Proposition \ref{prop:skewSymmetries} we have vacuum-free
skew-symmetry, which in turn gives us the result by Proposition
\ref{prop:weakskewassocplusskew}.
\end{proof}

\section{Modules for a vertex algebra with vacuum}
\setcounter{equation}{0}

In this section, we give the parallel results to those in Section
\ref{section:Module1}, where we now consider modules for a vertex
algebra (with vacuum).  Since any such module may also be viewed as a
module for a vacuum-free vertex algebra, most of the results carry
over without comment so we content ourselves with only discussing
certain new statements that we get.  Most importantly, we show that in
the notion of module for a vertex algebra, the Jacobi identity can
be replaced by either one (without the other) of weak associativity or
weak skew-associativity (in the sense of Proposition
\ref{prop:vacfreemodstuff}).
\begin{defi} \rm
A \it{module} \rm for a vertex algebra $V$ is a vector space $W$ which is a
vacuum-free module for $V$ when viewed as a vacuum-free vertex algebra
which further satisfies \it{the vacuum property} \rm:
\begin{align*}
Y_{W}(\textbf{1},x)=1,
\end{align*}
where $1$ is the identity operator on $W$.
\end{defi}
\begin{remark} \rm
We do not have an axiom for a module-type of creation property, and
this is not merely that we have chosen to remove any redundancy from
our axioms.  Indeed, our modules are really behaving as left modules
and so it does not make sense to have a right identity property, since
we cannot act on the vacuum vector.
\end{remark}
Following the proof of either Proposition \ref{prop:WADDER} or
Proposition \ref{prop:WSAgivesDDER}, we have the following
\it{$\mathcal{D}$-derivative property}\rm:
\begin{prop}
\label{prop:modDder2}
Let $W$ be a module for a vertex algebra $V$.  Then for any $v \in
V$, we have
\begin{align*}
Y_{W}(\mathcal{D}v,x)=\frac{d}{dx}Y_{W}(v,x).
\end{align*}
\end{prop}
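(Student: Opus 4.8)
The plan is to mirror the proof of Proposition \ref{prop:WADDER}, carried over verbatim to the module setting; the route through Proposition \ref{prop:WSAgivesDDER} works equally well, and I will indicate it at the end. All the needed ingredients are already in hand: module weak associativity (and module weak skew-associativity) from Proposition \ref{prop:vacfreemodstuff}, the module vacuum property $Y_W(\textbf{1},x)=1$, and the strong creation property, which holds \emph{inside} $V$.

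First I would specialize module weak associativity, taking the first algebra vector to be the given $v$ and the second to be $\textbf{1}$: for each $w \in W$ there is some $l \geq 0$ with
\begin{align*}
(x_{0}+x_{2})^{l}Y_{W}(v,x_{0}+x_{2})Y_{W}(\textbf{1},x_{2})w
=(x_{0}+x_{2})^{l}Y_{W}(Y(v,x_{0})\textbf{1},x_{2})w .
\end{align*}
The module vacuum property collapses the left-hand side to $(x_{0}+x_{2})^{l}Y_{W}(v,x_{0}+x_{2})w$, while on the right the inner product $Y(v,x_{0})\textbf{1}=e^{x_{0}\mathcal{D}}v$ is computed purely in $V$. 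As in the proof of Proposition \ref{prop:WADDER}, both sides are truncated from below in $x_{0}$ appropriately, so that multiplying by $(x_{2}+x_{0})^{-l}$ and invoking partial associativity cancels the factor $(x_{0}+x_{2})^{l}$, leaving
\begin{align*}
e^{x_{0}\frac{d}{dx_{2}}}Y_{W}(v,x_{2})w=Y_{W}(e^{x_{0}\mathcal{D}}v,x_{2})w ,
\end{align*}
where I have rewritten the left-hand side by the formal Taylor theorem. Extracting the coefficient of $x_{0}$ on each side gives $\frac{d}{dx_{2}}Y_{W}(v,x_{2})w=Y_{W}(\mathcal{D}v,x_{2})w$, and since $w$ is arbitrary this is the asserted $\mathcal{D}$-derivative property.

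The one point that requires attention, and it is exactly the feature flagged in the remark following the definition of a module, is that a module carries no creation property of its own. It might therefore seem that the algebra argument of Proposition \ref{prop:WADDER}, which reads off $\mathcal{D}v$ as the linear coefficient of $Y(v,x_{0})\textbf{1}$, cannot be imitated. The resolution, and the crux of the proof, is that both $\mathcal{D}v=v_{-2}\textbf{1}$ and the series $Y(v,x_{0})\textbf{1}$ live entirely in $V$, where the creation and strong creation properties are in force; on the module side only the vacuum property $Y_{W}(\textbf{1},x)=1$ is used. Thus no module creation property is needed, and the only genuinely technical step, the cancellation of $(x_{0}+x_{2})^{l}$ via truncation and partial associativity, is identical to the algebra case. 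Alternatively, specializing module weak skew-associativity at the second vector equal to $\textbf{1}$ and following Proposition \ref{prop:WSAgivesDDER} leads, after cancelling $(x_{1}-x_{0})^{m}$, to $e^{x_{0}\frac{d}{dx_{1}}}Y_{W}(v,x_{1})w=Y_{W}(Y(v,x_{0})\textbf{1},x_{1})w$, whose linear term in $x_{0}$ again yields the result.
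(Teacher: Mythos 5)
Your proof is correct and is precisely the argument the paper intends: the paper's own proof of this proposition consists of the single instruction to follow the proof of Proposition \ref{prop:WADDER} (or \ref{prop:WSAgivesDDER}) in the module setting, which is exactly what you have carried out, including the key observation that the creation and strong creation properties are only ever invoked inside $V$, so no module creation property is needed.
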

\begin{flushright} $\square$ \end{flushright}

In fact, we have more, since the proofs of Proposition
\ref{prop:WADDER} and Proposition \ref{prop:WSAgivesDDER} imply the
following:
\begin{prop}
\label{prop:modDder}
In the presence of the minor axioms of a module for a vertex algebra,
either single one of weak associativity or weak skew-associativity
(each in the sense of Proposition \ref{prop:vacfreemodstuff}) implies
the $\mathcal{D}$-derivative property (in the sense of Proposition
\ref{prop:modDder2}).
\end{prop}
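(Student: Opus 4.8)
The plan is to transcribe the two algebra arguments, Propositions \ref{prop:WADDER} and \ref{prop:WSAgivesDDER}, into the module setting, replacing the outer vertex operator $Y$ by $Y_W$ while leaving the inner operator $Y(u,x_0)v$ as the underlying \emph{algebra} vertex operator on $V$. The crucial observation is that the vacuum vector still lives in the vertex algebra $V$, so the expansion $Y(u,x_0)\textbf{1} \in V[[x_0]]$, with constant term $u$ and linear term $\mathcal{D}u$, is completely unchanged; the module $W$ contributes only its own vacuum property $Y_W(\textbf{1},x)=1$. No module-side creation property is required, and none is available, which is precisely why these two arguments survive the passage to modules whereas the weak-commutativity argument (Proposition \ref{prop:weakcommplusskew}) does not.

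For the weak-associativity branch I would begin from module weak associativity (Proposition \ref{prop:vacfreemodstuff}) and specialize $v=\textbf{1}$. Applying $Y_W(\textbf{1},x_2)=1$ on the left, there is some $l \geq 0$ with
\[
(x_0+x_2)^{l}Y_W(u,x_0+x_2)w=(x_0+x_2)^{l}Y_W(Y(u,x_0)\textbf{1},x_2)w.
\]
Since both sides are lower truncated in $x_0$, multiplying by $(x_2+x_0)^{-l}$ and invoking partial associativity clears the prefactor and yields $e^{x_0\frac{d}{dx_2}}Y_W(u,x_2)w=Y_W(Y(u,x_0)\textbf{1},x_2)w$. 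Reading off the coefficient of $x_0^{1}$ and using that the linear term of $Y(u,x_0)\textbf{1}$ is $\mathcal{D}u$ then gives the module $\mathcal{D}$-derivative property in the sense of Proposition \ref{prop:modDder2}.

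The weak-skew-associativity branch runs in exact parallel: specializing $v=\textbf{1}$ in module weak skew-associativity and using $Y_W(\textbf{1},-x_0+x_1)=1$ produces, after cancelling $(x_1-x_0)^{m}$, the identity $Y_W(u,x_1)w=Y_W(Y(u,x_0)\textbf{1},x_1-x_0)w$; rewriting the right-hand side via the formal Taylor theorem as $e^{-x_0\frac{d}{dx_1}}Y_W(Y(u,x_0)\textbf{1},x_1)w$ and again extracting the coefficient of $x_0^{1}$ delivers the $\mathcal{D}$-derivative property. I expect the only step needing genuine care, exactly as in the algebra proofs, to be the justification of the cancellation: one must confirm that both sides are suitably lower truncated in the relevant variable (so that clearing $(x_0+x_2)^{l}$, respectively $(x_1-x_0)^{m}$, by partial associativity meets the required existence conditions). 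Everything else is formal bookkeeping identical to that already carried out for Propositions \ref{prop:WADDER} and \ref{prop:WSAgivesDDER}.
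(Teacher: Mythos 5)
Your proposal is correct and takes essentially the same approach as the paper: the paper's entire proof of Proposition \ref{prop:modDder} is the remark that the proofs of Propositions \ref{prop:WADDER} and \ref{prop:WSAgivesDDER} carry over to the module setting, which is precisely the transcription you spell out. Your observation that the inner iterate $Y(u,x_{0})\textbf{1}$ remains an algebra-side object (so only the algebra creation property and the module vacuum property are needed, with no module creation property required) is exactly the point that makes the transfer legitimate.
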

\begin{flushright} $\square$ \end{flushright}

We now conclude with the main result of this paper.  We have already
done all the work.  The result for weak associativity was obtained in
Theorem 4.4.5 in \cite{LL}.  It is this result as regards weak
associativity (or more precisely, an easy corollary of it, Corollary
4.4.7 \cite{LL}) which entered into the proof in \cite{LL} showing the
equivalence of the notion of representation of a vertex algebra with
the notion of a vertex algebra module (see Theorem 5.3.15 in
\cite{LL}).  We have seen that the Jacobi identity may be replaced by
weak associativity together with weak skew-associativity (in the sense
of Proposition \ref{prop:vacfreemodstuff}).  In fact, by using the
algebra skew-symmetry, which we have ``for free,'' we obtain that
either one of the two is enough.  It is shown in \cite{LL} that the
same is not true for weak commutativity (see Remark 4.4.6 in
\cite{LL}).
\begin{theorem}
\label{theorem:main}
In the presence of the minor axioms of module for a vertex algebra,
either single one of weak associativity or weak skew-associativity
(each in the sense of Proposition \ref{prop:vacfreemodstuff}) is
equivalent to the Jacobi identity.
\end{theorem}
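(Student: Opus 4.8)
The plan is to prove the two nontrivial implications, namely that module weak associativity and module weak skew-associativity each (together with the minor axioms) force the module Jacobi identity; the reverse implications are immediate, since Proposition \ref{prop:vacfreemodstuff} already extracts both weak properties from the module Jacobi identity. The idea is to funnel both cases into the replacement results already proved in Section \ref{section:Module1}, namely Proposition \ref{prop:WAplusVFSSforJac} and Proposition \ref{prop:WSAplusVFSSforJac}. Each of those takes as an \emph{extra} hypothesis module vacuum-free skew-symmetry, so the whole task reduces to manufacturing module vacuum-free skew-symmetry out of the single weak property at hand.

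First I would extract the module $\mathcal{D}$-derivative property. By Proposition \ref{prop:modDder}, either weak associativity or weak skew-associativity already gives $Y_{W}(\mathcal{D}v,x_{2})=\frac{d}{dx_{2}}Y_{W}(v,x_{2})$, and exponentiating this (via the formal Taylor theorem, just as in the algebra computation at (\ref{eq:expDder})) yields $Y_{W}(e^{x_{0}\mathcal{D}}a,x_{2})=Y_{W}(a,x_{2}+x_{0})$ for fixed $a \in V$. Next I would call on the skew-symmetry of the underlying algebra, $Y(u,x_{0})v=e^{x_{0}\mathcal{D}}Y(v,-x_{0})u$, which costs nothing: $V$ is an honest vertex algebra, hence satisfies the full algebra Jacobi identity, so this relation holds unconditionally. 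Applying $Y_{W}(\,\cdot\,,x_{2})$ to algebra skew-symmetry and then pushing the result through the exponentiated module $\mathcal{D}$-derivative property gives
\begin{align*}
Y_{W}(Y(u,x_{0})v,x_{2})
&=Y_{W}(e^{x_{0}\mathcal{D}}Y(v,-x_{0})u,x_{2})\\
&=Y_{W}(Y(v,-x_{0})u,x_{2}+x_{0}),
\end{align*}
which is exactly module vacuum-free skew-symmetry (the statement of Proposition \ref{prop:modvacfree}); the coincidence of the shift variable with the variable appearing inside $Y(v,-x_{0})u$ requires only the routine two-variable substitution. With module vacuum-free skew-symmetry now in hand, Proposition \ref{prop:WAplusVFSSforJac} closes the weak-associativity case and Proposition \ref{prop:WSAplusVFSSforJac} closes the weak-skew-associativity case.

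The main obstacle is conceptual, not computational: module vacuum-free skew-symmetry may not simply be quoted from Proposition \ref{prop:modvacfree}, because that proposition derived it from the module Jacobi identity, which is precisely what we are trying to establish. The resolution rests on the asymmetry between the two skew-symmetries. Algebra skew-symmetry is an internal fact about $V$ and so is available for free, whereas the only module-level input we genuinely have to earn is the $\mathcal{D}$-derivative property, and this is supplied by either weak associativity or weak skew-associativity through Proposition \ref{prop:modDder}. This same mechanism explains why weak \emph{commutativity} is excluded: it does not yield the module $\mathcal{D}$-derivative property by this route (mirroring the heuristic that ``$c$'' fails to stay rightmost), so the analogous replacement genuinely fails, as recorded in \cite{LL}.
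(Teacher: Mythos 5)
Your proposal is correct and follows essentially the same route as the paper: Proposition \ref{prop:modDder} supplies the module $\mathcal{D}$-derivative property, which combined with the (free) algebra skew-symmetry yields module vacuum-free skew-symmetry exactly as in the relevant direction of Proposition \ref{prop:skewSymmetries}, and then Propositions \ref{prop:WAplusVFSSforJac} and \ref{prop:WSAplusVFSSforJac} finish the argument. You have merely written out explicitly the computation that the paper compresses into the phrase ``following the proof of Proposition \ref{prop:skewSymmetries},'' and your observation about why weak commutativity is excluded matches the paper's own remarks.
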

\begin{proof}    
By Proposition \ref{prop:modDder} we have the $\mathcal{D}$-derivative
property, and following the proof of Proposition
\ref{prop:skewSymmetries} we have vacuum-free skew-symmetry (in the
sense of Proposition \ref{prop:modvacfree}).  Thus Proposition
\ref{prop:WAplusVFSSforJac} and Proposition
\ref{prop:WSAplusVFSSforJac} give the result.
\end{proof}

\noindent {\small \sc Department of Mathematics, Rutgers University,
Piscataway, NJ 08854} 
\\ {\em E--mail
address}: thomasro@math.rutgers.edu
\end{document}